\newcommand{\tun}{\begin{picture}(5,0)(-2,-1)
\put(0,0){\circle*{2}}
\end{picture}}
\newcommand{\tdeux}{\begin{picture}(7,7)(0,-1)
\put(3,0){\circle*{2}}
\put(3,0){\line(0,1){5}}
\put(3,5){\circle*{2}}
\end{picture}}
\newcommand{\ttroisun}{\begin{picture}(15,8)(-5,-1)
\put(3,0){\circle*{2}}
\put(-0.65,0){$\vee$}
\put(6,7){\circle*{2}}
\put(0,7){\circle*{2}}
\end{picture}}
\newcommand{\ttroisdeux}{\begin{picture}(5,12)(-2,-1)
\put(0,0){\circle*{2}}
\put(0,0){\line(0,1){5}}
\put(0,5){\circle*{2}}
\put(0,5){\line(0,1){5}}
\put(0,10){\circle*{2}}
\end{picture}}
\newcommand{\tquatreun}{\begin{picture}(15,12)(-5,-1)
\put(3,0){\circle*{2}}
\put(-0.65,0){$\vee$}
\put(6,7){\circle*{2}}
\put(0,7){\circle*{2}}
\put(3,7){\circle*{2}}
\put(3,0){\line(0,1){7}}
\end{picture}}
\newcommand{\tquatredeux}{\begin{picture}(15,18)(-5,-1)
\put(3,0){\circle*{2}}
\put(-0.65,0){$\vee$}
\put(6,7){\circle*{2}}
\put(0,7){\circle*{2}}
\put(0,14){\circle*{2}}
\put(0,7){\line(0,1){7}}
\end{picture}}
\newcommand{\tquatretrois}{\begin{picture}(15,18)(-5,-1)
\put(3,0){\circle*{2}}
\put(-0.65,0){$\vee$}
\put(6,7){\circle*{2}}
\put(0,7){\circle*{2}}
\put(6,14){\circle*{2}}
\put(6,7){\line(0,1){7}}
\end{picture}}
\newcommand{\tquatrequatre}{\begin{picture}(15,18)(-5,-1)
\put(3,5){\circle*{2}}
\put(-0.65,5){$\vee$}
\put(6,12){\circle*{2}}
\put(0,12){\circle*{2}}
\put(3,0){\circle*{2}}
\put(3,0){\line(0,1){5}}
\end{picture}}
\newcommand{\tquatrecinq}{\begin{picture}(9,19)(-2,-1)
\put(0,0){\circle*{2}}
\put(0,0){\line(0,1){5}}
\put(0,5){\circle*{2}}
\put(0,5){\line(0,1){5}}
\put(0,10){\circle*{2}}
\put(0,10){\line(0,1){5}}
\put(0,15){\circle*{2}}
\end{picture}}
\newcommand{\tcinqun}{\begin{picture}(20,8)(-5,-1)
\put(3,0){\circle*{2}}
\put(-0.5,0){$\vee$}
\put(6,7){\circle*{2}}
\put(0,7){\circle*{2}}
\put(3,0){\line(2,1){10}}
\put(3,0){\line(-2,1){10}}
\put(-7,5){\circle*{2}}
\put(13,5){\circle*{2}}
\end{picture}}
\newcommand{\tcinqdeux}{\begin{picture}(15,14)(-5,-1)
\put(3,0){\circle*{2}}
\put(-0.65,0){$\vee$}
\put(6,7){\circle*{2}}
\put(0,7){\circle*{2}}
\put(3,7){\circle*{2}}
\put(3,0){\line(0,1){7}}
\put(0,7){\line(0,1){7}}
\put(0,14){\circle*{2}}
\end{picture}}
\newcommand{\tcinqtrois}{\begin{picture}(15,15)(-5,-1)
\put(3,0){\circle*{2}}
\put(-0.65,0){$\vee$}
\put(6,7){\circle*{2}}
\put(0,7){\circle*{2}}
\put(3,7){\circle*{2}}
\put(3,0){\line(0,1){7}}
\put(3,7){\line(0,1){7}}
\put(3,14){\circle*{2}}
\end{picture}}
\newcommand{\tcinqquatre}{\begin{picture}(15,14)(-5,-1)
\put(3,0){\circle*{2}}
\put(-0.65,0){$\vee$}
\put(6,7){\circle*{2}}
\put(0,7){\circle*{2}}
\put(3,7){\circle*{2}}
\put(3,0){\line(0,1){7}}
\put(6,7){\line(0,1){7}}
\put(6,14){\circle*{2}}
\end{picture}}
\newcommand{\tcinqcinq}{\begin{picture}(15,19)(-5,-1)
\put(3,0){\circle*{2}}
\put(-0.65,0){$\vee$}
\put(6,7){\circle*{2}}
\put(0,7){\circle*{2}}
\put(6,14){\circle*{2}}
\put(6,7){\line(0,1){7}}
\put(0,14){\circle*{2}}
\put(0,7){\line(0,1){7}}
\end{picture}}
\newcommand{\tcinqsix}{\begin{picture}(15,20)(-7,-1)
\put(3,0){\circle*{2}}
\put(-0.65,0){$\vee$}
\put(6,7){\circle*{2}}
\put(0,7){\circle*{2}}
\put(-3.65,7){$\vee$}
\put(3,14){\circle*{2}}
\put(-3,14){\circle*{2}}
\end{picture}}
\newcommand{\tcinqsept}{\begin{picture}(15,8)(-5,-1)
\put(3,0){\circle*{2}}
\put(-0.65,0){$\vee$}
\put(6,7){\circle*{2}}
\put(0,7){\circle*{2}}
\put(2.35,7){$\vee$}
\put(3,14){\circle*{2}}
\put(9,14){\circle*{2}}
\end{picture}}
\newcommand{\tcinqhuit}{\begin{picture}(15,26)(-5,-1)
\put(3,0){\circle*{2}}
\put(-0.65,0){$\vee$}
\put(6,7){\circle*{2}}
\put(0,7){\circle*{2}}
\put(0,14){\circle*{2}}
\put(0,7){\line(0,1){7}}
\put(0,21){\circle*{2}}
\put(0,14){\line(0,1){7}}
\end{picture}}
\newcommand{\tcinqneuf}{\begin{picture}(15,26)(-5,-1)
\put(3,0){\circle*{2}}
\put(-0.65,0){$\vee$}
\put(6,7){\circle*{2}}
\put(0,7){\circle*{2}}
\put(6,14){\circle*{2}}
\put(6,7){\line(0,1){7}}
\put(6,21){\circle*{2}}
\put(6,14){\line(0,1){7}}
\end{picture}}
\newcommand{\tcinqdix}{\begin{picture}(15,19)(-5,-1)
\put(3,5){\circle*{2}}
\put(-0.5,5){$\vee$}
\put(6,12){\circle*{2}}
\put(0,12){\circle*{2}}
\put(3,0){\circle*{2}}
\put(3,0){\line(0,1){12}}
\put(3,12){\circle*{2}}
\end{picture}}
\newcommand{\tcinqonze}{\begin{picture}(15,26)(-5,-1)
\put(3,5){\circle*{2}}
\put(-0.65,5){$\vee$}
\put(6,12){\circle*{2}}
\put(0,12){\circle*{2}}
\put(3,0){\circle*{2}}
\put(3,0){\line(0,1){5}}
\put(0,12){\line(0,1){7}}
\put(0,19){\circle*{2}}
\end{picture}}
\newcommand{\tcinqdouze}{\begin{picture}(15,26)(-5,-1)
\put(3,5){\circle*{2}}
\put(-0.65,5){$\vee$}
\put(6,12){\circle*{2}}
\put(0,12){\circle*{2}}
\put(3,0){\circle*{2}}
\put(3,0){\line(0,1){5}}
\put(6,12){\line(0,1){7}}
\put(6,19){\circle*{2}}
\end{picture}}
\newcommand{\tcinqtreize}{\begin{picture}(5,26)(-2,-1)
\put(0,0){\circle*{2}}
\put(0,0){\line(0,1){7}}
\put(0,7){\circle*{2}}
\put(0,7){\line(0,1){7}}
\put(0,14){\circle*{2}}
\put(-3.65,14){$\vee$}
\put(-3,21){\circle*{2}}
\put(3,21){\circle*{2}}
\end{picture}}
\newcommand{\tcinqquatorze}{\begin{picture}(9,26)(-5,-1)
\put(0,0){\circle*{2}}
\put(0,0){\line(0,1){5}}
\put(0,5){\circle*{2}}
\put(0,5){\line(0,1){5}}
\put(0,10){\circle*{2}}
\put(0,10){\line(0,1){5}}
\put(0,15){\circle*{2}}
\put(0,15){\line(0,1){5}}
\put(0,20){\circle*{2}}
\end{picture}}
\newcommand{\tdun}[1]{\begin{picture}(10,5)(-2,-1)
\put(0,0){\circle*{2}}
\put(3,-2){\tiny #1}
\end{picture}}
\newcommand{\tddeux}[2]{\begin{picture}(12,5)(0,-1)
\put(3,0){\circle*{2}}
\put(3,0){\line(0,1){5}}
\put(3,5){\circle*{2}}
\put(6,-2){\tiny #1}
\put(6,3){\tiny #2}
\end{picture}}
\newcommand{\tdtroisun}[3]{\begin{picture}(20,12)(-5,-1)
\put(3,0){\circle*{2}}
\put(-0.65,0){$\vee$}
\put(6,7){\circle*{2}}
\put(0,7){\circle*{2}}
\put(5,-2){\tiny #1}
\put(9,5){\tiny #2}
\put(-5,5){\tiny #3}
\end{picture}}
\newcommand{\tdtroisdeux}[3]{\begin{picture}(12,14)(-2,-1)
\put(0,0){\circle*{2}}
\put(0,0){\line(0,1){5}}
\put(0,5){\circle*{2}}
\put(0,5){\line(0,1){5}}
\put(0,10){\circle*{2}}
\put(3,-2){\tiny #1}
\put(3,3){\tiny #2}
\put(3,9){\tiny #3}
\end{picture}}
\newcommand{\tdquatreun}[4]{\begin{picture}(20,12)(-5,-1)
\put(3,0){\circle*{2}}
\put(-0.6,0){$\vee$}
\put(6,7){\circle*{2}}
\put(0,7){\circle*{2}}
\put(3,7){\circle*{2}}
\put(3,0){\line(0,1){7}}
\put(5,-2){\tiny #1}
\put(8.5,5){\tiny #2}
\put(1,10){\tiny #3}
\put(-5,5){\tiny #4}
\end{picture}}
\newcommand{\tdquatredeux}[4]{\begin{picture}(20,20)(-5,-1)
\put(3,0){\circle*{2}}
\put(-.65,0){$\vee$}
\put(6,7){\circle*{2}}
\put(0,7){\circle*{2}}
\put(0,14){\circle*{2}}
\put(0,7){\line(0,1){7}}
\put(5,-2){\tiny #1}
\put(9,5){\tiny #2}
\put(-5,5){\tiny #3}
\put(-5,12){\tiny #4}
\end{picture}}
\newcommand{\tdquatretrois}[4]{\begin{picture}(20,20)(-5,-1)
\put(3,0){\circle*{2}}
\put(-.65,0){$\vee$}
\put(6,7){\circle*{2}}
\put(0,7){\circle*{2}}
\put(6,14){\circle*{2}}
\put(6,7){\line(0,1){7}}
\put(5,-2){\tiny #1}
\put(9,5){\tiny #2}
\put(-5,5){\tiny #4}
\put(9,12){\tiny #3}
\end{picture}}
\newcommand{\tdquatrequatre}[4]{\begin{picture}(20,14)(-5,-1)
\put(3,5){\circle*{2}}
\put(-.65,5){$\vee$}
\put(6,12){\circle*{2}}
\put(0,12){\circle*{2}}
\put(3,0){\circle*{2}}
\put(3,0){\line(0,1){5}}
\put(6,-3){\tiny #1}
\put(6,4){\tiny #2}
\put(9,12){\tiny #3}
\put(-5,12){\tiny #4}
\end{picture}}
\newcommand{\tdquatrecinq}[4]{\begin{picture}(12,19)(-2,-1)
\put(0,0){\circle*{2}}
\put(0,0){\line(0,1){5}}
\put(0,5){\circle*{2}}
\put(0,5){\line(0,1){5}}
\put(0,10){\circle*{2}}
\put(0,10){\line(0,1){5}}
\put(0,15){\circle*{2}}
\put(3,-2){\tiny #1}
\put(3,3){\tiny #2}
\put(3,9){\tiny #3}
\put(3,14){\tiny #4}
\end{picture}}
\newcommand{\tquatredeuxa}{\begin{picture}(15,18)(-5,-1)
\put(3,0){\circle*{2}}
\put(-0.2,0.2){$\vee$}
\put(6,7){\circle*{2}}
\put(0,7){\circle*{2}}
\put(0,14){\circle*{2}}
\put(0,7){\line(0,1){7}}
\put(-4,4){\line(1,0){7}}
\end{picture}}
\newcommand{\tquatredeuxb}{\begin{picture}(15,18)(-5,-1)
\put(3,0){\circle*{2}}
\put(-0.2,0.2){$\vee$}
\put(6,7){\circle*{2}}
\put(0,7){\circle*{2}}
\put(0,14){\circle*{2}}
\put(0,7){\line(0,1){7}}
\put(-4,11){\line(1,0){7}}
\end{picture}}
\newcommand{\tquatredeuxc}{\begin{picture}(15,18)(-5,-1)
\put(3,0){\circle*{2}}
\put(-0.2,0.2){$\vee$}
\put(6,7){\circle*{2}}
\put(0,7){\circle*{2}}
\put(0,14){\circle*{2}}
\put(0,7){\line(0,1){7}}
\put(3,4){\line(1,0){7}}
\end{picture}}
\newcommand{\tquatredeuxd}{\begin{picture}(15,18)(-5,-1)
\put(3,0){\circle*{2}}
\put(-0.2,0.2){$\vee$}
\put(6,7){\circle*{2}}
\put(0,7){\circle*{2}}
\put(0,14){\circle*{2}}
\put(0,7){\line(0,1){7}}
\put(-4,4){\line(1,0){7}}
\put(-4,11){\line(1,0){7}}
\end{picture}}
\newcommand{\tquatredeuxe}{\begin{picture}(15,18)(-5,-1)
\put(3,0){\circle*{2}}
\put(-0.2,0.2){$\vee$}
\put(6,7){\circle*{2}}
\put(0,7){\circle*{2}}
\put(0,14){\circle*{2}}
\put(0,7){\line(0,1){7}}
\put(-4,4){\line(1,0){7}}
\put(3,4){\line(1,0){7}}
\end{picture}}
\newcommand{\tquatredeuxf}{\begin{picture}(15,18)(-5,-1)
\put(3,0){\circle*{2}}
\put(-0.2,0.2){$\vee$}
\put(6,7){\circle*{2}}
\put(0,7){\circle*{2}}
\put(0,14){\circle*{2}}
\put(0,7){\line(0,1){7}}
\put(-4,11){\line(1,0){7}}
\put(3,4){\line(1,0){7}}
\end{picture}}
\newcommand{\tquatredeuxg}{\begin{picture}(15,18)(-5,-1)
\put(3,0){\circle*{2}}
\put(-0.2,0.2){$\vee$}
\put(6,7){\circle*{2}}
\put(0,7){\circle*{2}}
\put(0,14){\circle*{2}}
\put(0,7){\line(0,1){7}}
\put(-4,4){\line(1,0){7}}
\put(-4,11){\line(1,0){7}}
\put(3,4){\line(1,0){7}}
\end{picture}}
\newcommand{\tdquatredeuxa}[4]{\begin{picture}(15,18)(-5,-1)
\put(3,0){\circle*{2}}
\put(-0.2,0.2){$\vee$}
\put(6,7){\circle*{2}}
\put(0,7){\circle*{2}}
\put(0,14){\circle*{2}}
\put(0,7){\line(0,1){7}}
\put(-4,4){\line(1,0){7}}
\put(5,-2){\tiny #1}
\put(9,5){\tiny #2}
\put(-5,5){\tiny #3}
\put(-5,12){\tiny #4}
\end{picture}}
\newcommand{\tdquatredeuxb}[4]{\begin{picture}(15,18)(-5,-1)
\put(3,0){\circle*{2}}
\put(-0.2,0.2){$\vee$}
\put(6,7){\circle*{2}}
\put(0,7){\circle*{2}}
\put(0,14){\circle*{2}}
\put(0,7){\line(0,1){7}}
\put(-4,11){\line(1,0){7}}
\put(5,-2){\tiny #1}
\put(9,5){\tiny #2}
\put(-5,5){\tiny #3}
\put(-5,12){\tiny #4}
\end{picture}}
\newcommand{\tdquatredeuxc}[4]{\begin{picture}(15,18)(-5,-1)
\put(3,0){\circle*{2}}
\put(-0.2,0.2){$\vee$}
\put(6,7){\circle*{2}}
\put(0,7){\circle*{2}}
\put(0,14){\circle*{2}}
\put(0,7){\line(0,1){7}}
\put(3,4){\line(1,0){7}}
\put(5,-2){\tiny #1}
\put(9,5){\tiny #2}
\put(-5,5){\tiny #3}
\put(-5,12){\tiny #4}
\end{picture}}
\newcommand{\tdquatredeuxd}[4]{\begin{picture}(15,18)(-5,-1)
\put(3,0){\circle*{2}}
\put(-0.2,0.2){$\vee$}
\put(6,7){\circle*{2}}
\put(0,7){\circle*{2}}
\put(0,14){\circle*{2}}
\put(0,7){\line(0,1){7}}
\put(-4,4){\line(1,0){7}}
\put(-4,11){\line(1,0){7}}
\put(5,-2){\tiny #1}
\put(9,5){\tiny #2}
\put(-5,5){\tiny #3}
\put(-5,12){\tiny #4}
\end{picture}}
\newcommand{\tdquatredeuxe}[4]{\begin{picture}(15,18)(-5,-1)
\put(3,0){\circle*{2}}
\put(-0.2,0.2){$\vee$}
\put(6,7){\circle*{2}}
\put(0,7){\circle*{2}}
\put(0,14){\circle*{2}}
\put(0,7){\line(0,1){7}}
\put(-4,4){\line(1,0){7}}
\put(3,4){\line(1,0){7}}
\put(5,-2){\tiny #1}
\put(9,5){\tiny #2}
\put(-5,5){\tiny #3}
\put(-5,12){\tiny #4}
\end{picture}}
\newcommand{\tdquatredeuxf}[4]{\begin{picture}(15,18)(-5,-1)
\put(3,0){\circle*{2}}
\put(-0.2,0.2){$\vee$}
\put(6,7){\circle*{2}}
\put(0,7){\circle*{2}}
\put(0,14){\circle*{2}}
\put(0,7){\line(0,1){7}}
\put(-4,11){\line(1,0){7}}
\put(3,4){\line(1,0){7}}
\put(5,-2){\tiny #1}
\put(9,5){\tiny #2}
\put(-5,5){\tiny #3}
\put(-5,12){\tiny #4}
\end{picture}}
\newcommand{\tdquatredeuxg}[4]{\begin{picture}(15,18)(-5,-1)
\put(3,0){\circle*{2}}
\put(-0.2,0.2){$\vee$}
\put(6,7){\circle*{2}}
\put(0,7){\circle*{2}}
\put(0,14){\circle*{2}}
\put(0,7){\line(0,1){7}}
\put(-4,4){\line(1,0){7}}
\put(-4,11){\line(1,0){7}}
\put(3,4){\line(1,0){7}}
\put(5,-2){\tiny #1}
\put(9,5){\tiny #2}
\put(-5,5){\tiny #3}
\put(-5,12){\tiny #4}
\end{picture}}
\newcommand{\addeux}[1]{\begin{picture}(12,5)(0,-1)
\put(3,0){\circle*{2}}
\put(3,0){\line(0,1){5}}
\put(3,5){\circle*{2}}
\put(4,1){\tiny #1}
\end{picture}}
\newcommand{\adtroisun}[2]{\begin{picture}(20,12)(-5,-1)
\put(3,0){\circle*{2}}
\put(-0.65,0){$\vee$}
\put(6,7){\circle*{2}}
\put(0,7){\circle*{2}}
\put(6,1){\tiny #1}
\put(-3,1){\tiny #2}
\end{picture}}
\newcommand{\adtroisdeux}[2]{\begin{picture}(12,14)(-2,-1)
\put(0,0){\circle*{2}}
\put(0,0){\line(0,1){5}}
\put(0,5){\circle*{2}}
\put(0,5){\line(0,1){5}}
\put(0,10){\circle*{2}}
\put(2,1){\tiny #1}
\put(2,6){\tiny #2}
\end{picture}}
\newcommand{\adquatreun}[3]{\begin{picture}(25,12)(-5,-1)
\put(5,0){\circle*{2}}
\put(15,10){\circle*{2}}
\put(-5,10){\circle*{2}}
\put(5,10){\circle*{2}}
\put(5,0){\line(0,1){10}}
\put(5,0){\line(-1,1){10}}
\put(5,0){\line(1,1){10}}
\put(-5,1){\tiny #1}
\put(1,4){\tiny #2}
\put(11,1){\tiny #3}
\end{picture}}
\newcommand{\adquatredeux}[3]{\begin{picture}(20,20)(-5,-1)
\put(3,0){\circle*{2}}
\put(-.65,0){$\vee$}
\put(6,7){\circle*{2}}
\put(0,7){\circle*{2}}
\put(0,14){\circle*{2}}
\put(0,7){\line(0,1){7}}
\put(6,1){\tiny #1}
\put(-3,1){\tiny #2}
\put(-5,9){\tiny #3}
\end{picture}}
\newcommand{\adquatretrois}[3]{\begin{picture}(20,20)(-5,-1)
\put(3,0){\circle*{2}}
\put(-.65,0){$\vee$}
\put(6,7){\circle*{2}}
\put(0,7){\circle*{2}}
\put(6,14){\circle*{2}}
\put(6,7){\line(0,1){7}}
\put(6,1){\tiny #1}
\put(-3,1){\tiny #2}
\put(7,9){\tiny #3}
\end{picture}}
\newcommand{\adquatrequatre}[3]{\begin{picture}(20,14)(-5,-1)
\put(3,5){\circle*{2}}
\put(-.65,5){$\vee$}
\put(6,12){\circle*{2}}
\put(0,12){\circle*{2}}
\put(3,0){\circle*{2}}
\put(3,0){\line(0,1){5}}
\put(4,1){\tiny #1}
\put(-3,6){\tiny #2}
\put(5,6){\tiny #3}
\end{picture}}
\newcommand{\adquatrecinq}[3]{\begin{picture}(12,19)(-2,-1)
\put(0,0){\circle*{2}}
\put(0,0){\line(0,1){5}}
\put(0,5){\circle*{2}}
\put(0,5){\line(0,1){5}}
\put(0,10){\circle*{2}}
\put(0,10){\line(0,1){5}}
\put(0,15){\circle*{2}}
\put(2,1){\tiny #1}
\put(2,6){\tiny #2}
\put(2,11){\tiny #3}
\end{picture}}
\newcommand{\adcinqsix}[4]{\begin{picture}(15,8)(-5,-1)
\put(3,0){\circle*{2}}
\put(-0.65,0){$\vee$}
\put(6,7){\circle*{2}}
\put(0,7){\circle*{2}}
\put(-3.65,7){$\vee$}
\put(-3,14){\circle*{2}}
\put(3,14){\circle*{2}}
\put(5,1){\tiny #2}
\put(-6,9){\tiny #3}
\put(-3,1){\tiny #1}
\put(2,9){\tiny #4}
\end{picture}}
\newcommand{\adcinqsept}[4]{\begin{picture}(15,8)(-5,-1)
\put(3,0){\circle*{2}}
\put(-0.65,0){$\vee$}
\put(6,7){\circle*{2}}
\put(0,7){\circle*{2}}
\put(2.35,7){$\vee$}
\put(3,14){\circle*{2}}
\put(9,14){\circle*{2}}
\put(-3,1){\tiny #1}
\put(5,1){\tiny #2}
\put(9,9){\tiny #3}
\put(0,9){\tiny #4}
\end{picture}}
\newcommand{\tdelta}{\tilde{\Delta}}
\newcommand{\mmodels}{\mid \hspace{-.2mm} \models}
\newcommand{\eps}{\epsilon}
\newcommand{\Sh}{\mathbf{Sh}}
\newcommand{\Csh}{\mathbf{Csh}}
\newcommand{\FQSym}{\mathbf{FQSym}}
\newcommand{\WQSyms}{\mathbf{WQSym^{\ast}}}
\newcommand{\WQSym}{\mathbf{WQSym}}
\newcommand{\hh}{\mathbf{H}}
\newcommand{\cc}{\mathbf{C}}
\newcommand{\D}{\mathcal{D}}
\newcommand{\FF}{\mathbb{F}}
\title{Preordered forests, packed words and contraction algebras}
\date{}
\author{Anthony Mansuy \\ \\
{\small{\it Laboratoire de Mathématiques, Université de Reims}}\\
\small{{\it Moulin de la Housse - BP 1039 - 51687 REIMS Cedex 2, France}}\\
\small{e-mail : anthony.mansuy@univ-reims.fr}}
\newtheorem{defi}{\indent Definition}
\newtheorem{lemma}[defi]{\indent Lemma}
\newtheorem{theo}[defi]{\indent Theorem}
\newtheorem{prop}[defi]{\indent Proposition}
\newenvironment{proof}{{\bf Proof.}}{\hfill $\Box$}
\def\shuff#1#2{\mathbin{
      \hbox{\vbox{
        \hbox{\vrule
              \hskip#2
              \vrule height#1 width 0pt
               }%
        \hrule}%
             \vbox{
        \hbox{\vrule
              \hskip#2
              \vrule height#1 width 0pt
               \vrule }%
        \hrule}%
}}}
\def\shuffl{{\mathchoice{\shuff{7pt}{3.5pt}}%
                        {\shuff{6pt}{3pt}}%
                        {\shuff{4pt}{2pt}}%
                        {\shuff{3pt}{1.5pt}}}}%
\def\shuffle{\, \shuffl \,}
\begin{document}
\maketitle

\textbf{Abstract.} We introduce the notions of preordered and heap-preordered forests, generalizing the construction of ordered and heap-ordered forests. We prove that the algebras of preordered and heap-preordered forests are Hopf for the cut coproduct, and we construct a Hopf morphism to the Hopf algebra of packed words. Moreover, we define another coproduct on the preordered forests given by the contraction of edges. Finally, we give a combinatorial description of morphims defined on Hopf algebras of forests with values in the Hopf algebras of shuffes or quasi-shuffles.
\\

\textbf{Résumé.} Nous introduisons les notions de forêts préordonnées et préordonnées en tas, généralisant les constructions des forêts ordonnées et ordonnées en tas. On démontre que les algèbres des forêts préordonnées et préordonnées en tas sont des algèbres de Hopf pour le coproduit de coupes et on construit un morphisme d'algèbres de Hopf dans l'algèbre des mots tassés. D'autre part, nous définissons un autre coproduit sur les forêts préordonnées donné par la contraction d'arêtes. Enfin, nous donnons une description combinatoire de morphismes définis sur des algèbres de Hopf de forêts et à valeurs dans les algèbres de Hopf de battages et de battages contractants.
\\

\textbf{Keywords.} Algebraic combinatorics, planar rooted trees, Hopf algebra of ordered forests, quasi-shuffle algebra.\\

\textbf{AMS Classification.} 05E99, 16W30, 05C05.

\tableofcontents

\section*{Introduction}

The Connes-Kreimer Hopf algebra of rooted forests $\mathbf{H}_{CK}$ is introduced and studied in \cite{Connes98,Moerdijk01}. This commutative, noncocommutative Hopf algebra is used to study a problem of Renormalisation
in Quantum Field Theory, as explained in \cite{Connes00,Connes01}. The coproduct is given by admissible cuts. We denote by $\mathbf{H}_{CK}^{\mathcal{D}}$ the Hopf algebra of rooted trees, where the vertices are decorated by decorations belonging to the set $ \mathcal{D}$. A noncommutative version, the Hopf algebra $\mathbf{H}_{NCK}$ of planar rooted forests, is introduced in \cite{FoissyI02,Holtkamp03}. When the vertices are given a total order, we obtain the Hopf algebra of ordered forests $\mathbf{H}_{o}$ and, adding an increasing condition, we obtain the Hopf subalgebra of heap-ordered forests $\mathbf{H}_{ho}$ (see \cite{Foissy10,Grossman90}).

Moreover, the Hopf algebra $\FQSym$ of free quasi-symmetric functions is introduced  in \cite{Duchamp00,Malvenuto05}. L. Foissy and J. Unterberger prove in \cite{Foissy10} that there exists a Hopf algebra morphism from $\mathbf{H}_{o}$ to $\FQSym$ and that its restriction to $\mathbf{H}_{ho}$ is an isomorphism of Hopf algebras.

In this text, we introduce the notion of preordered forests. A preorder is a binary reflexive and transitive relation. A preordered forest is a rooted forest with a total preorder on its vertices. We prove that the algebra of preordered forests $\mathbf{H}_{po}$ is a Hopf algebra for the cut coproduct. With an increasing condition, we define the algebra of heap-preordered trees $\mathbf{H}_{hpo}$ and we prove that $\mathbf{H}_{hpo}$ is a Hopf subalgebra of $\mathbf{H}_{po}$.

In \cite{Novelli06}, J.-C. Novelli and J.-Y. Thibon construct a generalization of $\FQSym$: the Hopf algebra $\WQSyms$ of free packed words. We prove a result similar to that of L. Foissy and J. Unterberger, by substituting the ordered forests by the preordered forests and the quasi-symmetric functions by the packed words. More precisely, we prove that there exists a Hopf algebra morphism from $\mathbf{H}_{po}$ to $\WQSyms$. In addition, we prove that its restriction to $\mathbf{H}_{hpo}$ is an injection of Hopf algebras.

Afterwards, we study a another coproduct called in this paper the contraction coproduct. In \cite{Calaque11}, D. Calaque, K. Ebrahimi-Fard and D. Manchon define this coproduct in a commutative case, on a quotient $\mathbf{C}_{CK}$ of $\mathbf{H}_{CK}$ (see also \cite{Manchon08}). We give a decorated version $\mathbf{C}_{CK}^{\mathcal{D}}$ of $\mathbf{C}_{CK}$. We define two operations $\curlyvee$ and $\rhd$ on the vector space $\mathbf{T}^{\mathcal{D}}_{CK}$ spanned by the trees of $\mathbf{C}_{CK}^{\mathcal{D}}$. We prove that $(\mathbf{T}^{\mathcal{D}}_{CK}, \curlyvee , \rhd)$ is a commutative prelie algebra, that is to say that $(A,\curlyvee)$ is a commutative algebra, $(A, \rhd)$ is a prelie algebra and with the following relation: for all $x,y,z \in \mathbf{T}^{\mathcal{D}}_{CK}$,
$$ x \rhd (y \curlyvee z) = (x \rhd y) \curlyvee z + (x \rhd z) \curlyvee y .$$
We prove that $(\mathbf{T}^{\mathcal{D}}_{CK}, \curlyvee , \rhd)$ is generated as commutative prelie algebra by the trees $\addeux{$d$}$, $d \in \mathcal{D}$.

We construct a noncommutative version of $\mathbf{C}_{CK}$. For this, we consider quotients of $\mathbf{H}_{NCK}$, $\mathbf{H}_{ho}$, $\mathbf{H}_{o}$, $\mathbf{H}_{hpo}$, $ \mathbf{H}_{po}$, denoted respectively $\mathbf{C}_{NCK}$, $\mathbf{C}_{ho}$, $\mathbf{C}_{o}$, $\mathbf{C}_{hpo}$, $ \mathbf{C}_{po}$, and we define on these quotients a contraction coproduct. We prove that $\mathbf{C}_{ho}, \mathbf{C}_{o}, \mathbf{C}_{hpo}, \mathbf{C}_{po}$ are Hopf algebras and that $\mathbf{C}_{NCK}$ is a left comodule of the Hopf algebra $\mathbf{C}_{ho}$.

Finally, we study the Hopf algebra morphisms from $\mathbf{H}_{CK}^{\mathcal{D}}$ or $\mathbf{C}_{CK}^{\mathcal{D}}$ to the Hopf algebra $\Sh^{\mathcal{D}}$ of shuffles or the Hopf algebra $\Csh^{\mathcal{D}}$ of quasi-shuffles (see \cite{Hoffman00}). We give a combinatorial description of these morphisms in each case. In particular, we note that, in the description of the morphism from $\mathbf{H}_{CK}^{\mathcal{D}}$ to $\Sh^{\mathcal{D}}$ or $\Csh^{\mathcal{D}}$, the contraction coproduct and the preordered forests appear naturally.
\\

This text is organized as follows: the first section is devoted to recalls about the Hopf algebras, for the cut coproduct, of rooted forests, planar forests and ordered and heap-ordered forests. We give recalls on the Hopf algebras of words in the second section. We define the Hopf algebra of permutations and packed words and we deduce the construction of $\Sh^{\mathcal{D}}$ and $\Csh^{\mathcal{D}}$. In section three, we define the algebras $\mathbf{H}_{po}$ and $\mathbf{H}_{hpo}$ of preordered and heap-preordered forests and we prove that these are Hopf algebras. The contraction coproduct, is introduced in the section four. We describe a commutative case and we study an insertion operation. We give a noncommutative version using ordered and preordered forests. The last section deals with Hopf algebra morphisms from $\mathbf{H}_{CK}^{\mathcal{D}}$ or $\mathbf{C}_{CK}^{\mathcal{D}}$ to $\Sh^{\mathcal{D}}$ or $\Csh^{\mathcal{D}}$. We give a combinatorial description of these morphisms in each case.
\\

{\bf Acknowledgment.} {I would like to thank my advisor Loïc Foissy for stimulating discussions and his support during my research.}
\\

{\bf Notations.} {
\begin{enumerate}
\item We shall denote by $ \mathbb{K} $ a commutative field of characteristic zero. Every vector space, algebra, coalgebra, etc, will be taken over $ \mathbb{K} $. Given a set $ X $, we denote by $ \mathbb{K} \left( X \right) $ the vector space spanned by $ X $.
\item Let $n$ be an integer. We denote by $\Sigma_{n}$ the symmetric group of order $n$ ($\Sigma_{0} = \{ 1 \}$) and $\Sigma$ the disjoint union of $\Sigma_{n}$ for all $n \geq 0$.
\item Let $ (A, \Delta , \varepsilon) $ be a counitary coalgebra. Let $ 1 \in A $, non zero, such that $ \Delta (1) = 1 \otimes 1 $. We then define the noncounitary coproduct:
\begin{eqnarray*}
\tdelta : \left\lbrace 
\begin{array}{rcl}
Ker(\varepsilon) & \rightarrow & Ker(\varepsilon) \otimes Ker(\varepsilon) ,\\
a & \mapsto & \Delta (a ) - a \otimes 1 - 1 \otimes a .
\end{array} \right. 
\end{eqnarray*}
\end{enumerate}
}

\section{Recalls on the Hopf algebras of forests}

\subsection{The Connes-Kreimer Hopf algebra of rooted trees}

We briefly recall the construction of the Connes-Kreimer Hopf algebra of rooted trees \cite{Connes98}. A {\it rooted tree} is a finite graph, connected, without loops, with a distinguished vertex called the {\it root} \cite{Stanley97}. We denote by $1$ the empty rooted tree. If $T$ is a rooted tree, we denote by $R_{T}$ the root of $T$. A {\it rooted forest} is a finite graph $ F $ such that any connected component of $ F $ is a rooted tree. The \textit{length} of a forest $ F $, denoted $ l(F) $, is the number of connected components of $ F $. The set of vertices of the rooted forest $ F $ is denoted by $V(F)$. The {\it vertices degree} of a forest $ F $, denoted $ \left| F \right|_{v} $, is the number of its vertices. The set of edges of the rooted forest $ F $ is denoted by $ E(F) $. The {\it edges degree} of a forest $ F $, denoted $ \left| F \right|_{e} $, is the number of its edges.\\

{\bf Remark.} {Let $F$ be a rooted forest. Then $ \left| F \right|_{v} = \left| F \right|_{e} + l(F) $.}
\\

{\bf Examples.} {\begin{enumerate}
\item Rooted trees of vertices degree $ \leq 5 $:
$$ 1,\tun,\tdeux,\ttroisun,\ttroisdeux,\tquatreun,\tquatredeux,\tquatrequatre,\tquatrecinq,\tcinqun,\tcinqdeux,\tcinqcinq,\tcinqsix,\tcinqhuit,\tcinqdix,\tcinqonze,\tcinqtreize,\tcinqquatorze . $$
\item Rooted forests of vertices degree $ \leq 4 $:
$$ 1,\tun,\tun\tun,\tdeux,\tun\tun\tun,\tdeux\tun,\ttroisun,\ttroisdeux,\tun\tun\tun\tun,\tdeux\tun\tun,\tdeux\tdeux,\ttroisun\tun,\ttroisdeux\tun,\tquatreun,\tquatredeux,\tquatrequatre,\tquatrecinq . $$
\end{enumerate}}

Let $\mathcal{D}$ be a nonempty set. A rooted forest with its vertices decorated by $\mathcal{D}$ is a couple $(F,d)$ with $F$ a rooted forest and $d : V(F) \rightarrow \mathcal{D}$ a map.
\\

{\bf Examples.} {Rooted trees with their vertices decorated by $\mathcal{D}$ of vertices degree smaller than 4:
$$\tdun{$a$}, a \in \mathcal{D}, \hspace{1cm} \tddeux{$a$}{$b$}, (a,b) \in \mathcal{D}^{2}, \hspace{1cm} \tdtroisun{$a$}{$c$}{$b$}, \tdtroisdeux{$a$}{$b$}{$c$}, (a,b,c) \in \mathcal{D}^{3}$$
$$ \tdquatreun{$a$}{$d$}{$c$}{$b$}, \tdquatredeux{$a$}{$d$}{$b$}{$c$}, \tdquatretrois{$a$}{$c$}{$d$}{$b$}, \tdquatrequatre{$a$}{$b$}{$d$}{$c$}, \tdquatrecinq{$a$}{$b$}{$c$}{$d$} , (a,b,c,d) \in \mathcal{D}^{4}.$$
}

Let $\mathbb{F}_{\hh_{CK}}$ be the set of rooted forests and $\mathbb{F}_{\hh_{CK}}^{\mathcal{D}}$ the set of rooted forests with their vertices decorated by $\mathcal{D}$. We will denote by $\hh_{CK}$ the $\mathbb{K}$-vector space generated by $\mathbb{F}_{\hh_{CK}}$ and by $\hh_{CK}^{\mathcal{D}}$ the $\mathbb{K}$-vector space generated by $\mathbb{F}_{\hh_{CK}}^{\mathcal{D}}$. The set of nonempty rooted trees will be denoted $\mathbb{T}_{\hh_{CK}}$ and the set of nonempty rooted trees with their vertices decorated by $\mathcal{D}$ will be denoted $\mathbb{T}_{\hh_{CK}}^{\mathcal{D}}$. $\hh_{CK}$ and $\hh_{CK}^{\mathcal{D}}$ are algebras: the product is given by the concatenation of rooted forests.
\\

Let $ F $ be a nonempty rooted forest. A {\it subtree} $ T $ of $ F $ is a nonempty connected subgraph of $ F $. A {\it subforest} $ T_{1} \hdots T_{k} $ of $ F $ is a product of disjoint subtrees $ T_{1}, \hdots , T_{k} $ of $ F $. We can give the same definition in the decorated case.
\\

{\bf Examples.} {Consider the tree $ T = \tquatredeux $. Then:
\begin{itemize}
\item The subtrees of $ T $ are $ \tun$ (which appears $4$ times), $\tdeux$ (which appears $3$ times), $\ttroisun$, $\ttroisdeux$ and $\tquatredeux $ (which appear once).
\item The subforests of $T$ are $\tun \tun , \tdeux \tun$ (which appear $6$ times), $ \tun , \tun \tun \tun $ (which appear $4$ times), $\tdeux, \tdeux \tun \tun $ (which appear $3$ times) and $\ttroisun , \ttroisdeux , \tun \tun \tun \tun , \ttroisun \tun , \ttroisdeux \tun , \tquatredeux $ (which appear once).
\end{itemize}
}

Let $ F $ be a rooted forest. The edges of $ F $ are oriented downwards (from the leaves to the roots). If $v,w \in V(F)$, we shall note $v \rightarrow w$ if there is an edge in $ F $ from $v$ to $w$ and $v \twoheadrightarrow w$ if there is an oriented path from $v$ to $w$ in $ F $. By convention, $v \twoheadrightarrow v$ for any $v \in V(F)$.

Let $\boldsymbol{v}$ be a subset of $V(F)$. We shall say that $\boldsymbol{v}$ is an admissible cut of $F$, and we shall write $\boldsymbol{v} \models V(F)$, if $\boldsymbol{v}$ is totally disconnected, that is to say that $v \twoheadrightarrow w \hspace{-.7cm} / \hspace{.7cm}$ for any couple $(v,w)$ of two different elements of $\boldsymbol{v}$. If $\boldsymbol{v} \models V(F)$, we denote by $Lea_{\boldsymbol{v}} (F) $ the rooted subforest of $ F $ obtained by keeping only the vertices above $\boldsymbol{v}$, that is to say $\{ w \in V(F), \: \exists v \in \boldsymbol{v}, \:w \twoheadrightarrow v \}$, and the edges between these vertices. Note that $\boldsymbol{v} \subseteq Lea_{\boldsymbol{v}}(F) $. We denote by $Roo_{\boldsymbol{v}}(F)$ the rooted subforest obtained by keeping the other vertices and the edges between these vertices.

In particular, if $\boldsymbol{v}=\emptyset$, then $Lea_{\boldsymbol{v}} (F) =1$ and $Roo_{\boldsymbol{v}} (F) = F $: this is the {\it empty cut} of $ F $. If $\boldsymbol{v}$ contains all the roots of $ F $, then it contains only the roots of $ F $, $Lea_{\boldsymbol{v}} (F) = F$ and $Roo_{\boldsymbol{v}}(F) = 1$: this is the {\it total cut} of $ F $. We shall write $\boldsymbol{v} \mmodels V( F )$ if $\boldsymbol{v}$ is a nontotal, nonempty admissible cut of $ F $.\\

Connes and Kreimer proved in \cite{Connes98} that $ \hh_{CK} $ is a Hopf algebra. The coproduct is the cut coproduct defined for any rooted forest $ F $ by:
$$\Delta_{\hh_{CK}}(F)=\sum_{\boldsymbol{v} \models V(F)} Lea_{\boldsymbol{v}} (F) \otimes Roo_{\boldsymbol{v}} (F)
=F \otimes 1+1\otimes F+\sum_{\boldsymbol{v} \mmodels V(F)} Lea_{\boldsymbol{v}}(F) \otimes Roo_{\boldsymbol{v}}(F) .$$
For example:
$$\Delta_{\hh_{CK}} (\tquatredeux)=\tquatredeux \otimes 1+1\otimes \tquatredeux+
\tun \otimes \ttroisun+\tdeux \otimes \tdeux+\tun \otimes \ttroisdeux+\tun\tun \otimes \tdeux+\tdeux \tun \otimes \tun.$$
In the same way, we can define a cut coproduct on $\hh_{CK}^{\mathcal{D}}$. With this coproduct, $\hh_{CK}^{\mathcal{D}}$ is also a Hopf algebra. For example:
$$ \Delta_{\hh_{CK}^{\mathcal{D}}} ( \tdquatredeux{$a$}{$d$}{$b$}{$c$} ) = \tdquatredeux{$a$}{$d$}{$b$}{$c$} \otimes 1 + 1 \otimes \tdquatredeux{$a$}{$d$}{$b$}{$c$} + \tdun{$c$} \otimes \tdtroisun{$a$}{$d$}{$b$} + \tddeux{$b$}{$c$} \otimes \tddeux{$a$}{$d$} + \tdun{$d$} \otimes \tdtroisdeux{$a$}{$b$}{$c$} + \tdun{$c$} \tdun{$d$} \otimes \tddeux{$a$}{$b$} + \tddeux{$b$}{$c$} \tdun{$d$} \otimes \tdun{$a$} .$$

$\hh_{CK}$ is graded by the number of vertices. We give some values of the number $f_{n}^{\hh_{CK}}$ of rooted forests of vertices degree $n$:

$$\begin{array}{c|c|c|c|c|c|c|c|c|c|c|c}
n&0&1&2&3&4&5&6&7&8&9&10\\
\hline f_{n}^{\hh_{CK}} &1&1&2&4&9&20&48&115&286&719&1842
\end{array}$$

These is the sequence A000081 in \cite{Sloane}.

\subsection{Hopf algebras of planar trees}

We now recall the construction of the noncommutative generalization of the Connes-Kreimer Hopf algebra \cite{FoissyI02,Holtkamp03}.\\

A {\it planar forest} is a rooted forest $ F $ such that the set of the roots of $ F $ is totally ordered and, for any vertex $v \in V(F)$, the set $\{w \in V(F)\:\mid \:w \rightarrow v\}$ is totally ordered. Planar forests are represented such that the total orders on the set of roots and the sets $\{w \in V(F)\:\mid \:w \rightarrow v\}$ for any $v \in V(F)$ is given from left to right. We denote by $ \mathbb{T}_{\hh_{NCK}} $ the set of nonempty planar trees and $\mathbb{F}_{\hh_{NCK}}$ the set of planar forests.\\

{\bf Examples.} {\begin{enumerate}
\item Planar rooted trees of vertices degree $\leq 5$:
$$\tun,\tdeux,\ttroisun,\ttroisdeux,\tquatreun, \tquatredeux,\tquatretrois,\tquatrequatre,\tquatrecinq,\tcinqun,\tcinqdeux,\tcinqtrois,\tcinqquatre,\tcinqcinq,
\tcinqsix,\tcinqsept,\tcinqhuit,\tcinqneuf,\tcinqdix,\tcinqonze,\tcinqdouze,\tcinqtreize,\tcinqquatorze.$$
\item Planar rooted forests of vertices degree $\leq 4$:
$$1,\tun,\tun\tun,\tdeux,\tun\tun\tun,\tdeux\tun,\tun \tdeux,\ttroisun,\ttroisdeux,\tun\tun\tun\tun,\tdeux\tun\tun,\tun \tdeux \tun, \tun \tun \tdeux,
\ttroisun\tun,\tun \ttroisun,\ttroisdeux\tun,\tun \ttroisdeux,\tdeux\tdeux,\tquatreun,\tquatredeux,\tquatretrois,\tquatrequatre,\tquatrecinq.$$
\end{enumerate}}

If $\boldsymbol{v} \models V(F)$, then $Lea_{\boldsymbol{v}}(F)$ and $Roo_{\boldsymbol{v}}(F)$ are naturally planar forests. It is proved in \cite{FoissyI02} that the space $ \hh_{NCK} $ generated by planar forests is a Hopf algebra. Its product is given by the concatenation of planar forests and its coproduct is defined for any rooted forest $ F $ by:
$$\Delta_{\hh_{NCK}}(F)=\sum_{\boldsymbol{v} \models V(F)} Lea_{\boldsymbol{v}}(F) \otimes Roo_{\boldsymbol{v}}(F)
=F \otimes 1+1\otimes F+\sum_{\boldsymbol{v} \mmodels V(F)} Lea_{\boldsymbol{v}}(F) \otimes Roo_{\boldsymbol{v}}(F).$$
For example:
\begin{eqnarray*}
\Delta_{\hh_{NCK}} (\tquatredeux )&=&\tquatredeux \otimes 1+1\otimes \tquatredeux+
\tun \otimes \ttroisun+\tdeux \otimes \tdeux+\tun \otimes \ttroisdeux+\tun\tun \otimes \tdeux+\tdeux \tun \otimes \tun ,\\
\Delta_{\hh_{NCK}}(\tquatretrois) &=&\tquatretrois \otimes 1+1\otimes \tquatretrois+
\tun \otimes \ttroisun+\tdeux \otimes \tdeux+\tun \otimes \ttroisdeux+\tun\tun \otimes \tdeux+\tun\tdeux \otimes \tun.
\end{eqnarray*}

As in the nonplanar case, there is a decorated version $\hh_{NCK}^{\mathcal{D}}$ of $\hh_{NCK}$. Moreover, $ \hh_{NCK} $ is a Hopf algebra graded by the number of vertices. The number $f_{n}^{\hh_{NCK}}$ of planar forests of vertices degree $n$ (equal to the number of planar trees of vertices degree $ n+1 $) is the $n$-Catalan number $\frac{(2n)!}{n!(n+1)!}$, see sequence A000108 of \cite{Sloane}. We have:
\begin{eqnarray} \label{seriesNCK}
T_{\hh_{NCK}}(x) = \dfrac{1-\sqrt{1-4x}}{2}, \hspace{1cm} F_{\hh_{NCK}}(x) = \dfrac{1-\sqrt{1-4x}}{2x}.
\end{eqnarray}

This gives:

$$\begin{array}{c|c|c|c|c|c|c|c|c|c|c|c}
n&0&1&2&3&4&5&6&7&8&9&10\\
\hline f_{n}^{\hh_{NCK}} &1&1&2&5&14&42&132&429&1430&4862&16796
\end{array}$$

\subsection{Ordered and heap-ordered forests}

\begin{defi}
An ordered forest $F$ is a rooted forest $F$ with a total order on $V(F)$. The set of ordered forests is denoted by $ \mathbb{F}_{\hh_{o}} $ and the $ \mathbb{K} $-vector space generated by $ \mathbb{F}_{\hh_{o}} $ is denoted by $ \hh_{o} $.
\end{defi}

{\bf Remarks and notations.} {If $F$ is an ordered forest, then there exits a unique increasing bijection $\sigma : V(F) \rightarrow \{1 ,\hdots, \left| F \right|_{v}\}$ for the total order on $V(F)$.

Reciprocally, if $F$ is a rooted forest and if $\sigma : V(F) \rightarrow \{1 ,\hdots, \left| F \right|_{v}\}$ is a bijection, then $\sigma$ defines a total order on $V(F)$ and $F$ is an ordered forest.

Depending on the case, we shall denote an ordered forest by $F$ or $(F,\sigma)$.}
\\

{\bf Examples.} {Ordered forests of vertices degree $ \leq 3 $:
\begin{eqnarray*}
1 , \tdun{$1$} , \tdun{$1$}\tdun{$2$},\tddeux{$1$}{$2$},\tddeux{$2$}{$1$} , \tdun{$1$}\tdun{$2$}\tdun{$3$}, \tdun{$1$}\tddeux{$2$}{$3$},\tdun{$1$}\tddeux{$3$}{$2$},\tddeux{$1$}{$3$}\tdun{$2$},\tdun{$2$}\tddeux{$3$}{$1$},\tddeux{$1$}{$2$}\tdun{$3$},\tddeux{$2$}{$1$}\tdun{$3$},
\tdtroisun{$1$}{$3$}{$2$},\tdtroisun{$2$}{$3$}{$1$},\tdtroisun{$3$}{$2$}{$1$}, \tdtroisdeux{$1$}{$2$}{$3$},\tdtroisdeux{$1$}{$3$}{$2$},\tdtroisdeux{$2$}{$1$}{$3$},\tdtroisdeux{$2$}{$3$}{$1$},\tdtroisdeux{$3$}{$1$}{$2$},\tdtroisdeux{$3$}{$2$}{$1$} .
\end{eqnarray*}
}

Let $ (F, \sigma^{F}) $ and $ (G, \sigma^{G}) $ be two ordered forests. Then the rooted forest $FG$ is also an ordered forest $ (F G , \sigma^{FG}) $  where
\begin{eqnarray} \label{prodforetord}
\sigma^{FG} = \sigma^{F} \otimes \sigma^{G} :
\left\lbrace \begin{array}{rcl}
V(F) \bigcup V(G) & \rightarrow & \{1 , \hdots , \left| F \right|_{v} + \left| G \right|_{v} \} \\
a \in V(F) & \mapsto & \sigma^{F}(a) \\
a \in V(G) & \mapsto & \sigma^{G}(a) + \left| F \right|_{v} .
\end{array}
\right.
\end{eqnarray}

In other terms, we keep the initial order on the vertices of $F$ and $G$ and we assume that the vertices of $F$ are smaller than the vertices of $G$. This defines a noncommutative product on the set of ordered forests. 
For example, the product of $\tdun{1}$ and $\tddeux{1}{2}$ gives $\tdun{1}\tddeux{2}{3} = \tddeux{2}{3} \tdun{1}$, whereas the product of $\tddeux{1}{2}$ and $\tdun{1}$
gives $\tddeux{1}{2}\tdun{3}=\tdun{3}\tddeux{1}{2}$. This product is linearly extended to $\hh_{o}$, which in this way becomes an algebra.\\

$\hh_{o}$ is graded by the number of vertices and there is $(n+1)^{n-1}$ ordered forests in vertices degree $n$, see sequence A000272 of \cite{Sloane}.\\

If $ F $ is an ordered forest, then any subforest $G$ of $ F $ is also ordered: the total order on $V(G)$ is the restriction of the total order of $V(F)$. So we can define a coproduct $ \Delta_{\hh_{o}} : \hh_o \rightarrow \hh_o \otimes \hh_o $ on $ \hh_o $ in the following way: for all $ F \in \mathbb{F}_{\hh_{o}} $,
$$ \Delta_{\hh_{o}}(F)=\sum_{\boldsymbol{v} \models V(F)} Lea_{\boldsymbol{v}} (F) \otimes Roo_{\boldsymbol{v}} (F) .$$

For example,
$$\Delta_{\hh_{o}} (\tdquatredeux{2}{3}{4}{1} )=\tdquatredeux{2}{3}{4}{1} \otimes 1+1\otimes \tdquatredeux{2}{3}{4}{1}
+\tdun{1} \otimes \tdtroisun{1}{3}{2}+\tddeux{2}{1} \otimes \tddeux{1}{2}+\tdun{1} \otimes \tdtroisdeux{2}{3}{1}
+\tdun{1}\tdun{2} \otimes \tddeux{1}{2}+\tddeux{3}{1}\tdun{2} \otimes \tdun{1}.$$
With this coproduct, $ \hh_o $ is a Hopf algebra.

\begin{defi} \cite{Grossman90}
A heap-ordered forest is an ordered forest $ F $ such that if $a, b \in V(F)$, $a \neq b$ and $a \twoheadrightarrow b$, then $a$ is greater than $b$ for the total order on $V(F)$. The set of heap-ordered forests is denoted by $\mathbb{F}_{\hh_{ho}}$.
\end{defi}

{\bf Examples.} {Heap-ordered forests of vertices degree $ \leq 3 $:
$$ 1 , \tdun{$1$} , \tdun{$1$}\tdun{$2$},\tddeux{$1$}{$2$} , \tdun{$1$}\tdun{$2$}\tdun{$3$},\tdun{$1$}\tddeux{$2$}{$3$},\tdun{$2$}\tddeux{$1$}{$3$},\tdun{$3$}\tddeux{$1$}{$2$},\tdtroisun{$1$}{$3$}{$2$},\tdtroisdeux{$1$}{$2$}{$3$} .$$}

\begin{defi} \label{ordrelineaire}
A linear order on a nonempty rooted forest $ F $ is a bijective map $ \sigma : V(F) \rightarrow \{1, \hdots , \left| F \right|_{v} \} $ such that if $ a,b \in V(F) $ and $ a \twoheadrightarrow b $, then $ \sigma (a) \geq \sigma (b) $. We denote by $ \mathcal{O}(F) $ the set of linear orders on the nonempty rooted forest $ F $.
\end{defi}

{\bf Remarks.} {If $(F,\sigma)$ is a heap-ordered forest, then the increasing bijection $ \sigma : V(F) \rightarrow \{1, \hdots , \left| F \right|_{v} \} $ is a linear order on $F$. Reciprocally, if $F$ is a rooted forest and $\sigma \in \mathcal{O}(F)$, then $\sigma$ defines a total order on $V(F)$ such that $(F,\sigma)$ is a heap-ordered forest.}
\\

If $F$ and $G$ are two heap-ordered forests, then $FG$ is an ordered forest with (\ref{prodforetord}) and also a heap-ordered forest. Moreover, any subforest $G$ of a heap-ordered forest $ F $ is also a heap-ordered forest by restriction on $V(G)$ of the total order of $V(F)$. So the subspace $\hh_{ho}$ of $\hh_o$ generated by the heap-ordered forests is a graded Hopf subalgebra of $\hh_o$.\\

The number of heap-ordered forests of vertices degree $n$ is $n!$, see sequence A000142 of \cite{Sloane}.\\

{\bf Remarks.} {\begin{enumerate}
\item A planar forest can be considered as an ordered forest by ordering its vertices in the "north-west" direction (this is the order defined in \cite{FoissyI02} or given by the Depth First Search algorithm). This defines an algebra morphism $\phi : \hh_{NCK} \rightarrow \hh_{o}$. For example:
\begin{eqnarray} \label{exampleplanord}
\begin{array}{rcl|rcl|rcl}
\ttroisun & \overset{\phi}{\longrightarrow} & \tdtroisun{$1$}{$3$}{$2$} & \ttroisdeux &\overset{\phi}{\longrightarrow}& \tdtroisdeux{$1$}{$2$}{$3$} & \tquatreun \tdeux & \overset{\phi}{\longrightarrow}& \tdquatreun{$1$}{$4$}{$3$}{$2$} \tddeux{$5$}{$6$} \\
\ttroisun \tquatredeux & \overset{\phi}{\longrightarrow} & \tdtroisun{$1$}{$3$}{$2$} \tdquatredeux{$4$}{$7$}{$5$}{$6$} & \tquatretrois \ttroisdeux &\overset{\phi}{\longrightarrow}& \tdquatretrois{$1$}{$3$}{$4$}{$2$} \tdtroisdeux{$5$}{$6$}{$7$} &
\tdeux \tquatrequatre & \overset{\phi}{\longrightarrow} & \tddeux{$1$}{$2$} \tdquatrequatre{$3$}{$4$}{$6$}{$5$}
\end{array}
\end{eqnarray}
\item Reciprocally, an ordered forest is also planar, by restriction of the total order to the subsets of vertices formed by the roots or $\{w \in V(\FF)\:\mid \:w \rightarrow v\}$. This defines an algebra morphism $\psi : \hh_{o} \rightarrow \hh_{NCK}$. For example:
\begin{eqnarray}
\begin{array}{rcl|rcl|rcl}
\tdun{$1$} \tddeux{$3$}{$2$}& \overset{\psi}{\longrightarrow} & \tun \tdeux & \tdun{$2$} \tddeux{$1$}{$3$} & \overset{\psi}{\longrightarrow} & \tdeux \tun & \tdquatredeux{$5$}{$1$}{$3$}{$4$} \tddeux{$2$}{$6$} & \overset{\psi}{\longrightarrow} \tdeux \tquatretrois \\
\tdquatreun{$1$}{$5$}{$3$}{$2$} \tdtroisdeux{$4$}{$7$}{$6$} & \overset{\psi}{\longrightarrow} & \tquatreun \ttroisdeux & \tdquatretrois{$4$}{$7$}{$1$}{$5$} \tdtroisun{$2$}{$6$}{$3$} & \overset{\psi}{\longrightarrow} & \ttroisun \tquatretrois & \tdtroisun{$5$}{$3$}{$1$} \tdquatrequatre{$2$}{$6$}{$7$}{$4$}  & \overset{\psi}{\longrightarrow} \tquatrequatre \ttroisun
\end{array}
\end{eqnarray}
\end{enumerate}}

Note that $\psi \circ \phi = Id_{\hh_{NCK}}$ therefore $\psi$ is surjective and $\phi$ is injective. $\psi$ and $\phi$ are not bijective (by considering the dimensions).

Moreover $\phi$ is a Hopf algebra morphism and its image is included in $\hh_{ho}$. $\psi$ is not a Hopf algebra morphism: in the expression of $(\psi \otimes \psi) \circ \Delta_{\hh_{o}}(\tdun{$3$} \tdtroisdeux{$4$}{$1$}{$2$})$ we have the tensor $\tdeux \tun \otimes \tun $ and in the expression of $\Delta_{\hh_{NCK}} \circ \psi(\tdun{$3$} \tdtroisdeux{$4$}{$1$}{$2$})$ we have the different tensor $\tun \tdeux \otimes \tun $. By cons, the restriction of $\psi$ of $\hh_{ho}$ is a Hopf algebra morphism.\\

In the following, we consider $ \hh_{NCK} $ as a Hopf subalgebra of $ \hh_{ho} $ and $\hh_{o}$.

\section{Recalls on the Hopf algebras of words}

\subsection{Hopf algebra of permutations and shuffles}

\indent

{\bf Notations.} {\begin{enumerate}
\item Let $k,l$ be integers. A {\it $(k,l)$-shuffle} is a permutation $\zeta$ of $\{1,\ldots,k+l\}$, such that 
$\zeta(1)<\ldots< \zeta(k)$ and $\zeta(k+1)<\ldots< \zeta(k+l)$. The set of $(k,l)$-shuffles will be denoted by $Sh(k,l)$.
\item We represent a permutation $\sigma \in \Sigma_n$ by the word $(\sigma(1)\ldots \sigma(n))$. For example,
$Sh(2,1)=\{(123),\:(132),\:(231)\}$.
\end{enumerate}}

{\bf Remark.} {For any integer $k,l$, any permutation $\sigma \in \Sigma_{k+l}$ can be uniquely written as $\epsilon \circ (\sigma_1\otimes \sigma_2)$, where 
$\sigma_1 \in \Sigma_k$, $\sigma_2 \in \Sigma_l$, and $\epsilon \in Sh(k,l)$. Similarly, considering the inverses,
any permutation $\tau \in \Sigma_{k+l}$ can be uniquely written as $(\tau_1\otimes \tau_2) \circ \zeta^{-1}$, where 
$\tau_1 \in \Sigma_k$, $\tau_2 \in \Sigma_l$, and $\zeta \in Sh(k,l)$. Note that, whereas $\eps$ renames the numbers of each lists $(\sigma(1),\ldots,\sigma(k)),(\sigma(k+1),\ldots,\sigma(k+l))$ without
changing their orderings,
$\zeta^{-1}$ shuffles the lists $(\tau(1),\ldots,\tau(k)),(\tau(k+1),\ldots,\tau(k+l))$. For instance, if $k=4$, $l=3$ and $\sigma = (5172436) $ then
\begin{itemize}
\item  $\sigma = \epsilon \circ (\sigma_1\otimes \sigma_2)$, with $\epsilon = (1257346) \in Sh(4,3)$, $\sigma_{1} = (3142) \in \Sigma_{4}$ and $\sigma_{2} = (213) \in \Sigma_{3}$,
\item $\sigma = (\tau_1\otimes \tau_2) \circ \zeta^{-1} $, with $\tau_{1} = (1243) \in \Sigma_{4}$, $\tau_{2} = (132) \in \Sigma_{3}$ and $\zeta = (2456137) \in Sh(4,3)$.
\end{itemize} }
\vspace{0.5cm}

We here briefly recall the construction of the Hopf algebra $\FQSym$ of free quasi-symmetric functions, also called the Malvenuto-Reutenauer Hopf algebra
\cite{Duchamp00,Malvenuto05}. As a vector space, a basis of $\FQSym$ is given by the disjoint union of the symmetric groups $\Sigma_n$, for all $n \geq 0$.
By convention, the unique element of $\Sigma_0$ is denoted by $1$.
The product of $\FQSym$ is given, for $\sigma \in \Sigma_k$, $\tau \in \Sigma_l$, by:
$$\sigma.\tau=\sum_{\zeta \in Sh(k,l)} (\sigma \otimes \tau) \circ \zeta^{-1}.$$
In other words, the product of $\sigma$ and $\tau$ is given by shifting the letters of the word
representing $\tau$ by $k$, and then summing all the possible shufflings of this word and of the word representing $\sigma$.
For example:
\begin{eqnarray*}
(123)(21)&=&(12354)+(12534)+(15234)+(51234)+(12543)\\
&&+(15243)+(51243)+(15423)+(51423)+(54123).
\end{eqnarray*}

Let $\sigma \in \Sigma_n$. For all $0\leq k \leq n$, there exists a unique triple 
$\left(\sigma_1^{(k)},\sigma_2^{(k)},\epsilon_k\right)\in \Sigma_k \times \Sigma_{n-k} \times Sh(k,n-k)$ such that $\sigma=\epsilon_{k} \circ \left(\sigma_1^{(k)} \otimes \sigma_2^{(k)}\right)$. The coproduct of $\FQSym$ is then defined by:
$$\Delta_{\FQSym}(\sigma)=\sum_{k=0}^n \sigma_1^{(k)} \otimes \sigma_2^{(k)}
=\sum_{k=0}^n \sum_{\substack{\sigma=\epsilon \circ (\sigma_1 \otimes \sigma_2)\\ \epsilon \in Sh(k,n-k), \sigma_1 \in \Sigma_k,\sigma_2 \in \Sigma_{n-k}}}
\sigma_1 \otimes \sigma_2.$$
Note that $\sigma_1^{(k)}$ and $\sigma_2^{(k)}$ are obtained by cutting the word representing $\sigma$ between the $k$-th and the $(k+1)$-th letter,
and then {\it standardizing} the two obtained words by the following process. If $v$ is a words of length $n$ whose the letters are distinct integers, then the standardizing of $v$, denoted $Std(v)$, is the word obtained by applying to the letters of $v$ the unique increasing bijection to $\{1,\ldots,n\}$.
For example:
\begin{eqnarray*}
\Delta_{\FQSym}((41325))&=&1\otimes (41325)+Std(4)\otimes Std(1325)+Std(41)\otimes Std(325)\\
&&+Std(413)\otimes Std(25)+Std(4132)\otimes Std(5)+(41325)\otimes 1\\
&=&1\otimes (41325)+(1) \otimes (1324)+(21) \otimes (213)\\
&&+(312)\otimes (12)+(4132) \otimes (1)+(41325) \otimes 1.
\end{eqnarray*}
Then $\FQSym$ is a Hopf algebra. It is graded, with $\FQSym(n)=Vect(\Sigma_n)$ for all $n \geq 0$. \\

It is also possible to give a decorated version of $\FQSym$. Let $\D$ be a nonempty set. A $\D$-decorated permutation is a couple $(\sigma,d)$, where $\sigma \in \Sigma_n$
and $d$ is a map from $\{1,\ldots,n\}$ to $\D$. A $\D$-decorated permutation is represented by two superposed words 
$\left(\substack{a_1\ldots a_n\\v_1\ldots v_n}\right)$,
where $(a_1 \ldots a_n)$ is the word representing $\sigma$ and for all $i$, $v_i=d(a_i)$. The vector space $\FQSym^{\D}$ generated by the set
of $\D$-decorated permutations is a Hopf algebra. For example, if $x,y,z,t \in \D$:
\begin{eqnarray*}
\left(\substack{213\\yxz}\right).\left(\substack{1\\t}\right)
&=&\left(\substack{2134\\yxzt}\right)+ \left(\substack{2143\\yxtz}\right)+ \left(\substack{2413\\ytxz}\right)+ \left(\substack{4213\\tyxz}\right),\\
\Delta_{\FQSym^{\mathcal{D}}} \left( \left(\substack{4321\\tzyx}\right) \right) &=&\left( \substack{4321\\tzyx}\right)\otimes 1
+\left(\substack{321\\tzy}\right)\otimes \left(\substack{1\\x}\right)+\left( \substack{21\\tz}\right)\otimes \left(\substack{21\\yx}\right)+
\left(\substack{1\\t}\right) \otimes \left(\substack{321\\zyx}\right)+1\otimes \left(\substack{4321\\tzyx}\right).
\end{eqnarray*}
In other words, if $(\sigma,d)$ and $(\tau,d')$ are decorated permutations of respective degrees $k$ and $l$:
\begin{eqnarray*}
(\sigma,d).(\tau,d')=\sum_{\zeta \in Sh(k,l)}((\sigma \otimes \tau) \circ \zeta^{-1}, d \otimes d'),
\end{eqnarray*}
where $d \otimes d'$ is defined by $(d \otimes d')(i)=d(i)$ if $1 \leq i \leq k$ and $(d \otimes d')(k+j)=d'(j)$ if $1 \leq j \leq l$.
If $(\sigma,d)$ is a decorated permutation of degree $n$:
\begin{eqnarray*}
\Delta_{\FQSym^{\mathcal{D}}}((\sigma,d))=\sum_{k=0}^n \sum_{\substack{\sigma=\epsilon \circ (\sigma_1 \otimes \sigma_2)\\ 
\epsilon \in Sh(k,l), \sigma_1 \in \Sigma_k,\sigma_2 \in \Sigma_l}} (\sigma_1 ,d') \otimes (\sigma_2 , d''),
\end{eqnarray*}
where $d = (d' \otimes d'') \circ \epsilon^{-1}$.

In some sense, a $\D$-decorated permutation can be seen as a word with a total order on the set of its letters.\\

We can now define the shuffle Hopf algebra $\Sh^{\D}$ (see \cite{Hoffman00,Reutenauer93}). A $\mathcal{D}$-word is a finite sequence of elements taken in $\D$. It is graded by the degree of words, that is to say the number of their letters. As a vector space, $\Sh^{\D}$ is generated by the set of $\D$-words.\\

The surjective linear map from $\FQSym^{\D}$ to $\Sh^{\D}$, sending the decorated permutation $\left(\substack{a_1\ldots a_n\\v_1\ldots v_n}\right)$ to the $\D$-word $(v_{1} \ldots v_{n})$, define a Hopf algebra structure on $\Sh^{\D}$:
\begin{itemize}
\item The product $\shuffle$ of $\mathbf{Sh}^{\D}$ is given in the following way: if $ (v_{1} \hdots v_{k})$ is a $\D$-word of degree $k$, $ (v_{k+1} \hdots v_{k+l})$ is a $\D$-word of degree $l$, then
\begin{eqnarray*}
(v_{1} \hdots v_{k}) \shuffle (v_{k+1} \hdots v_{k+l}) = \sum_{\zeta \in Sh(k,l)} v_{\zeta^{-1}(1)} \hdots v_{\zeta^{-1}(k+l)} .
\end{eqnarray*}
\item The coproduct $\Delta_{\Sh^{\D}}$ of $\mathbf{Sh}^{\D}$ is given on any $\D$-word $w=(v_{1} \hdots v_{n})$ by
\begin{eqnarray*}
\Delta_{\Sh^{\D}}(w) = \sum_{i=0}^{n} (v_{1} \hdots v_{i}) \otimes (v_{i+1} \hdots v_{n}) .
\end{eqnarray*}
\end{itemize}
\vspace{0.5cm}

{\bf Examples.} {\begin{enumerate}
\item If $(v_{1} v_{2} v_{3})$ and $(v_{4} v_{5})$ are two $\D$-words,
\begin{eqnarray*}
(v_1v_2v_3)\shuffle (v_4v_5) &=& (v_1v_2v_3v_4v_5) + (v_1v_2v_4v_3v_5) + (v_1v_2v_4v_5v_3) +(v_1v_4v_2v_3v_5) \\
& & +(v_1v_4v_2v_5v_3) +(v_1v_4v_5v_2v_3) + (v_4v_1v_2v_3v_5) + (v_4v_1v_2v_5v_3)\\
& & +(v_4v_1v_5v_2v_3) + (v_4v_5v_1v_2v_3).
\end{eqnarray*}
\item If $(v_{1} v_{2} v_{3} v_{4})$ is a $\D$-word,
\begin{eqnarray*}
\Delta_{\Sh^{\D}}((v_{1} v_{2} v_{3} v_{4})) & = & (v_{1} v_{2} v_{3} v_{4}) \otimes 1 + (v_{1} v_{2} v_{3}) \otimes (v_{4}) + (v_{1} v_{2}) \otimes (v_{3} v_{4})\\
& & + (v_{1}) \otimes (v_{2} v_{3} v_{4}) + 1 \otimes (v_{1} v_{2} v_{3} v_{4}) .
\end{eqnarray*}
\end{enumerate}
}
\vspace{0.5cm}

There is a link between the algebras $\hh_{o}$, $\hh_{ho}$ and $\FQSym$ given by the following result (see \cite{Foissy10}):

\begin{prop} \label{dehdansfqsym}
\begin{enumerate}
\item Let $n \geq 0$. For all $(F,\sigma) \in \mathbb{F}_{\hh_{o}}$, let $\mathbb{S}_{F}$ be the set of permutations $\theta \in \Sigma_{n}$ such that for all $a,b \in V(F)$, $(a \twoheadrightarrow b) \Rightarrow (\theta^{-1}(\sigma(a)) \leq \theta^{-1}(\sigma(b)))$. Let us define:
\begin{eqnarray*}
\Theta : \left\lbrace \begin{array}{rcl}
\hh_{o} & \rightarrow & \FQSym \\
F \in \mathbb{F}_{\hh_{o}} & \mapsto & \displaystyle\sum_{\theta \in \mathbb{S}_{F}} \theta .
\end{array}
\right. 
\end{eqnarray*}
Then $\Theta : \hh_{o} \rightarrow \FQSym$ is a Hopf algebra morphism, homogeneous of degree $0$.
\item The restriction of $\Theta$ to $\hh_{ho}$ is an isomorphism of graded Hopf algebras.
\end{enumerate}
\end{prop}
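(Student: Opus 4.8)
The statement has two parts. For part (1), I first need to check that $\Theta$ is well defined, i.e.\ that $\mathbb{S}_F$ depends only on the ordered forest $(F,\sigma)$ and not on extra choices — this is immediate from the definition. The real content is that $\Theta$ is an algebra morphism, a coalgebra morphism, and homogeneous of degree $0$. Homogeneity is clear since $|F|_v = n$ forces $\mathbb{S}_F \subseteq \Sigma_n$. For multiplicativity, I would take ordered forests $(F,\sigma^F)$ of degree $k$ and $(G,\sigma^G)$ of degree $l$, with product $(FG,\sigma^{FG})$ given by (\ref{prodforetord}); I then show that the map $\theta \mapsto ((\sigma^F)\text{-part}, (\sigma^G)\text{-part}, \zeta)$ arising from the decomposition $\theta = (\theta_1 \otimes \theta_2)\circ\zeta^{-1}$ with $\zeta \in Sh(k,l)$ gives a bijection between $\mathbb{S}_{FG}$ and $\mathbb{S}_F \times \mathbb{S}_G \times Sh(k,l)$. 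The key point: since $FG$ has no paths between a vertex of $F$ and a vertex of $G$, the defining path-condition for $\mathbb{S}_{FG}$ decouples into the conditions on $F$ and on $G$ separately, while the shuffle $\zeta$ is free. This matches exactly the product formula of $\FQSym$, $\sigma.\tau = \sum_{\zeta\in Sh(k,l)}(\sigma\otimes\tau)\circ\zeta^{-1}$, so $\Theta(FG) = \Theta(F).\Theta(G)$.

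For comultiplicativity I would argue similarly: fix $(F,\sigma)$ of degree $n$ and compare $\Delta_{\FQSym}(\Theta(F))$ with $(\Theta\otimes\Theta)(\Delta_{\hh_o}(F))$. On the right, $\Delta_{\hh_o}(F) = \sum_{\boldsymbol v \models V(F)} Lea_{\boldsymbol v}(F)\otimes Roo_{\boldsymbol v}(F)$, and each factor inherits the restricted order. On the left, $\Delta_{\FQSym}(\theta) = \sum_{k}\theta_1^{(k)}\otimes\theta_2^{(k)}$ cuts the word of $\theta$ after position $k$ and standardizes. I would set up a bijection between the pairs $(\boldsymbol v, \phi)$ with $\boldsymbol v\models V(F)$ and $\phi\in\mathbb{S}_{Lea_{\boldsymbol v}(F)}\times\mathbb{S}_{Roo_{\boldsymbol v}(F)}$, and the pairs $(\theta, k)$ with $\theta\in\mathbb{S}_F$: given $\theta\in\mathbb{S}_F$ and a cut position $k$, the vertices whose $\sigma$-label is sent by $\theta^{-1}$ to $\{1,\dots,k\}$ form (using the path-monotonicity condition) a down-closed set under $\twoheadrightarrow$ in the reversed sense, i.e.\ its complement determines an admissible cut $\boldsymbol v$; conversely an admissible cut plus linear extensions of the two pieces reassembles such a $\theta$. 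The condition $(a\twoheadrightarrow b)\Rightarrow(\theta^{-1}\sigma(a)\le\theta^{-1}\sigma(b))$ is precisely what guarantees that the ``leaves part'' occupies an initial segment of positions, which is why the cut coproduct on forests corresponds to the deconcatenation-and-standardize coproduct on $\FQSym$. The compatibility with counits is trivial.

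For part (2), I must show $\Theta|_{\hh_{ho}}$ is an isomorphism of graded Hopf algebras onto $\FQSym$. Since both sides are graded with finite-dimensional components and $\dim \hh_{ho}(n) = n! = \dim\FQSym(n)$, it suffices to prove that $\Theta|_{\hh_{ho}}$ is bijective in each degree; I would do this by exhibiting a triangularity. For a heap-ordered forest $F$ with its increasing bijection $\sigma$ (which is a linear order in the sense of Definition~\ref{ordrelineaire}), $\sigma$ itself, viewed as a permutation, belongs to $\mathbb{S}_F$ — indeed, $a\twoheadrightarrow b$ with $a\ne b$ forces $\sigma(a) > \sigma(b)$ by the heap condition, hence $\sigma^{-1}(\sigma(a)) = a \dots$ — more carefully, the identity permutation lies in $\mathbb{S}_F$ because $a\twoheadrightarrow b \Rightarrow \sigma(a)\ge\sigma(b)$. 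So $\Theta(F)$ always contains the term $\mathrm{id}_n$... that is not yet triangularity. Instead I would order heap-ordered forests of degree $n$ by a suitable statistic and show $\Theta(F) = \theta_F + (\text{lower terms})$ for a distinguished permutation $\theta_F$ depending bijectively on $F$; concretely, the map sending a heap-ordered forest to the permutation recording, in $\sigma$-order, the parent structure (each vertex points to its parent's label) is a bijection from degree-$n$ heap-ordered forests to $\Sigma_n$, and one checks $\mathbb{S}_F$ has this permutation as its $\le$-maximal (or minimal) element for an appropriate order, with all other forests contributing strictly lower terms. This makes the matrix of $\Theta|_{\hh_{ho}}$ in the two natural bases unitriangular, hence invertible in every degree. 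Combined with part (1), $\Theta|_{\hh_{ho}}$ is then a graded Hopf algebra isomorphism.

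\textbf{Main obstacle.} The delicate step is the comultiplicativity in part (1): matching admissible cuts of $F$ together with linear orders on the two pieces against the pairs $(\theta, k)$ coming from $\Delta_{\FQSym}$, and verifying the standardization is exactly accounted for. One must check carefully that the path-monotonicity condition defining $\mathbb{S}_F$ makes ``positions $1,\dots,k$'' correspond to a genuine subforest $Lea_{\boldsymbol v}(F)$ and not an arbitrary vertex subset, and that no $\theta$ or cut is counted with the wrong multiplicity. For part (2), the subtle point is pinning down the precise order/statistic on heap-ordered forests that makes $\Theta$ triangular; one should leverage the bijection between heap-ordered forests of size $n$ and $\Sigma_n$ so that the leading term reads off the forest uniquely.
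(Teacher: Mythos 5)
Your plan for part (1) is essentially the argument the paper itself uses: the paper only recalls Proposition \ref{dehdansfqsym} from \cite{Foissy10}, but its proof of the packed-word analogue (Theorem \ref{dehdanswgsym}) proceeds exactly as you describe --- the unique factorization $\theta=(\theta_1\otimes\theta_2)\circ\zeta^{-1}$ identifies $\mathbb{S}_{FG}$ with $\mathbb{S}_F\times\mathbb{S}_G\times Sh(k,l)$ for multiplicativity, and a bijection between pairs $(\theta,k)$ and triples (admissible cut, linear order on $Lea_{\boldsymbol{v}}(F)$, linear order on $Roo_{\boldsymbol{v}}(F)$) gives comultiplicativity. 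Your observation that the condition defining $\mathbb{S}_F$ forces positions $1,\dots,k$ to cut out a genuine $Lea_{\boldsymbol{v}}(F)$ rather than an arbitrary vertex subset is precisely the role played by condition 2 of Definition \ref{defivarphipreordered} in the paper's proof.

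Two remarks on part (2). First, a concrete slip: since $a\twoheadrightarrow b$ with $a\neq b$ in a heap-ordered forest gives $\sigma(a)>\sigma(b)$, while membership in $\mathbb{S}_F$ requires $\theta^{-1}(\sigma(a))\le\theta^{-1}(\sigma(b))$, the distinguished element of $\mathbb{S}_F$ is the decreasing permutation $(n\,(n-1)\cdots 1)$, not the identity; you half-notice the problem and discard the remark, but the direction of this inequality matters when you set up the triangularity. Second, the ``suitable statistic'' you leave unspecified is exactly what the paper supplies in its proof that $\Phi|_{\hh_{hpo}}$ is injective: it orders words by a right-to-left lexicographic order, attaches to each forest the maximal word $m((F,d))$ with $\mathbb{S}^{u}_{(F,d)}\neq\emptyset$, and proves $m$ is injective by peeling off roots and factoring forests into trees ordered by their $m$-values. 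Your dimension count $\dim\hh_{ho}(n)=n!=\dim\FQSym(n)$ together with such a triangularity does finish the proof, but as written the leading-term claim (that the parent-recording permutation is the extremal element of $\mathbb{S}_F$ and that distinct forests have distinct leading terms dominating all other contributions) is asserted rather than verified; this is the one step where your proposal falls short of a complete argument.
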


\subsection{Hopf algebra of packed words and quasi-shuffles}

Recall the construction of the Hopf algebra $\WQSyms$ of free packed words (see \cite{Novelli06}).
\\

{\bf Notations.} {\begin{enumerate}
\item Let $n \geq 0$. We denote by $Surj_{n}$ the set of maps $\sigma : \{1,\ldots,n \} \rightarrow \mathbb{N}^{\ast}$, such that $\sigma (\{1,\ldots,n \}) = \{1,\ldots,k \}$ for a certain $k \in \mathbb{N}$. In this case, $k$ is the maximum of $\sigma$ and is denoted by $ \max (\sigma)$ and $n$ is the length of $\sigma$. We represent the element $\sigma$ of $Surj_{n}$ by the packed word $(\sigma(1) \ldots \sigma(n))$.
\item Let $k,l$ be two integers. A {\it $(k,l)$-surjective shuffle} is an element $\epsilon$ of $Surj_{k+l}$ such that $\epsilon(1) < \hdots < \epsilon(k)$ and $\epsilon(k+1) < \hdots < \epsilon(k+l)$. The set of $(k,l)$-surjective shuffles will be denoted by $SjSh(k,l)$. For example, $SjSh(2,1) = \{ (121), (122) , (123), (132), (231) \}$.
\end{enumerate}
}

Let $v$ be a word such that the letters occuring in $v$ are integers $a_{1} < a_{2} < \hdots < a_{k}$. The {\it packing} of $v$, denoted by $pack(v)$, is the image of letters of $v$ by the application $a_{i} \mapsto i$. For example, $pack((22539)) = (11324)$, $pack((831535)) = (421323)$.
\\

{\bf Remark.} {Let $k,l$ be two integers and $\sigma \in Surj_{k+l}$. We set $p_{k}= \max (pack((\sigma(1) \hdots \sigma(k))))$ and $q_{k}= \max (pack( (\sigma(k+1) \hdots \sigma(k+l))))$. Then $\sigma$ can be uniquely written as $\epsilon \circ (\sigma_1 \otimes \sigma_2)$, where 
$\sigma_1 \in Surj_k$, $\sigma_2 \in Surj_l$, and $\epsilon \in SjSh(p_{k},q_{k})$. For instance, if $k=4$, $l=3$ and $\sigma = (2311223)$ then $p_{4} = 3$, $q_{4} = 2$ and $\sigma = \epsilon \circ (\sigma_1 \otimes \sigma_2)$ with $\epsilon = (12323) \in SjSh(3,2)$, $\sigma_{1} = (2311) \in Surj_{4}$ and $\sigma_{2} = (112) \in Surj_{3}$.}
\\

As a vector space, a basis of $\WQSyms$ is given by the disjoint union of the sets $Surj_{n}$, for all $n \geq 0$. By convention, the unique element of $Surj_{0}$ is denoted by $1$. The product of $\WQSyms$ is given, for $\sigma \in Surj_{k}$ and $\tau \in Surj_{l}$, by:
\begin{eqnarray*}
\sigma . \tau = \sum_{\zeta \in Sh(k,l)} (\sigma \otimes \tau) \circ \zeta^{-1} .
\end{eqnarray*}
In other words, as in the $\FQSym$ case, the product of $\sigma$ and $\tau$ is given by shifting the letters of the word representing $\tau$ by $k$, and summing all the possible shuffings of this word and of the word representing $\sigma$. For example:
\begin{eqnarray*}
(112)(21) & = & (11243) + (11423) + (14123) + (41123) + (11432) + (14132)\\
& & + (41132) + (14312) + (41312) + (43112)
\end{eqnarray*}

Let $\sigma \in Surj_{n}$. For all $0 \leq k \leq n$, there exists a unique triple $\left( \sigma_{1}^{(k)}, \sigma_{2}^{(k)}, \epsilon_{k} \right) \in Surj_{k} \times Surj_{n-k} \times SjSh(p_{k},q_{k})$ such that $\sigma = \epsilon_{k} \circ \left( \sigma_{1}^{(k)} \otimes \sigma_{2}^{(k)} \right)$. The coproduct of $\WQSyms$ is then given by:
\begin{eqnarray*}
\Delta_{\WQSyms}(\sigma)=\sum_{k=0}^n \sigma_1^{(k)} \otimes \sigma_2^{(k)}
=\sum_{k=0}^n \sum_{\substack{\sigma=\epsilon \circ (\sigma_1 \otimes \sigma_2) \\ \epsilon \in SjSh(p_{k},q_{k}), \sigma_1 \in Surj_k,\sigma_2 \in Surj_{n-k}}}
\sigma_1 \otimes \sigma_2.
\end{eqnarray*}
Note that $\sigma_1^{(k)}$ and $ \sigma_2^{(k)}$ are obtained by cutting the word representing $\sigma$ between the $k$-th and the $(k+1)$-th letter, and then packing the two obtained words. For example:
\begin{eqnarray*}
\Delta_{\WQSyms} ((21132)) & = & 1 \otimes (21132) + pack((2)) \otimes pack((1132)) + pack((21)) \otimes pack((132)) \\
& & + pack((211)) \otimes pack((32)) + pack((2113)) \otimes pack((2)) + (21132) \otimes 1 \\
& = & 1 \otimes (21132) + (1) \otimes (1132) + (21) \otimes (132) + (211) \otimes (21) \\
& & + (2113) \otimes (1) + (21132) \otimes 1 .
\end{eqnarray*}
Then $\WQSyms$ is a graded Hopf algebra, with $\WQSyms (n) = Surj_{n}$ for all $n \geq 0$. We give some numerical values: if $f^{\WQSyms}_{n}$ is the number of packed words of length $n$, then

$$\begin{array}{c|c|c|c|c|c|c|c|c}
n&0&1&2&3&4&5&6&7\\
\hline f^{\WQSyms}_{n} &1&1&3&13&75&541&4683&47293
\end{array}$$

These is the sequence A000670 in \cite{Sloane}.\\

$\WQSyms$ is the gradued dual of $\WQSym$, described as follows. A basis of $\WQSym$ is given by the disjoint union of the sets $Surj_{n}$. The product of $\WQSym$ is given, for $\sigma \in Surj_{k}$, $\tau \in Surj_{l}$ by:
\begin{eqnarray*}
\sigma . \tau = \sum_{\substack{\gamma = \gamma_{1} \hdots \gamma_{k+l} \\ pack(\gamma_{1} \hdots \gamma_{k}) = \sigma , \ pack(\gamma_{k+1} \hdots \gamma_{k+l})= \tau}} \gamma
\end{eqnarray*}
In other terms, the product of $\sigma$ and $\tau$ is given by shifting certain letters of the words representing $\sigma$ and $\tau$ and then summing all concatenations of obtained words. For example:
\begin{eqnarray*}
(112) (21) & = & (11221) + (11321) + (22321)+ (33421)+ (11231) + (22331) + (22431)\\
& & + (11232) + (11332) + (11432) + (22341) + (11342) + (11243)
\end{eqnarray*}
If $\sigma \in Surj_{n}$, then the coproduct of $\WQSym$ is given by:
\begin{eqnarray*}
\Delta_{\WQSym} (\sigma) = \sum_{0 \leq k \leq max(\sigma)} \sigma_{|[1,k]} \otimes pack(\sigma_{|[k+1,max(\sigma)]}) ,
\end{eqnarray*}
where $\sigma_{|\mathcal{A}}$ is the subword obtained by tacking in $\sigma$ the letters from the subset $\mathcal{A}$ of $[1 , max(\sigma)]$. For example:
\begin{eqnarray*}
\Delta_{\WQSym} ((21312245)) & = & 1 \otimes (21312245) + (11) \otimes pack((232245)) + (21122) \otimes pack((345)) \\
& & + (213122) \otimes pack((45)) + (2131224) \otimes pack((5)) + (21312245) \otimes 1 \\
& = & 1 \otimes (21312245) + (11) \otimes (121134) + (21122) \otimes (123) \\
& & + (213122) \otimes (12) + (2131224) \otimes (1) + (21312245) \otimes 1 .
\end{eqnarray*}
Then $\WQSym$ is a Hopf algebra.\\

We give a decorated version of $\WQSym$. Let $\mathcal{D}$ be a nonempty set. A $\mathcal{D}$-decorated surjection is a couple $(\sigma,d)$, where $\sigma \in Surj_{n}$ and $d$ is a map from $\{1,\hdots,n \}$ to $\mathcal{D}$. As in the $\FQSym^{\mathcal{D}}$ case, we represent a $\mathcal{D}$-decorated surjection by two superposed words $\left(\substack{a_1\ldots a_n\\v_1\ldots v_n}\right)$, where $(a_{1} \ldots a_{n})$ is the packed word representing $\sigma$ and for all $i$, $v_{i} = d(a_{i})$. The vector space $\WQSym^{\mathcal{D}}$ generated by the set of $\mathcal{D}$-decorated surjections is a Hopf algebra. For example, if $x,y,z,t \in \mathcal{D}$:
\begin{eqnarray*}
\left(\substack{211\\yxz}\right).\left(\substack{1\\t}\right) & = & \left(\substack{2111\\yxzt}\right) + \left(\substack{2112\\yxzt}\right) + \left(\substack{2113\\yxzt}\right) + \left(\substack{3221\\yxzt}\right) +
\left(\substack{3112\\yxzt}\right) . \\
\Delta_{\WQSym^{\mathcal{D}}} \left( \left(\substack{2113\\yxzt}\right) \right) & = & \left(\substack{2113\\yxzt}\right) \otimes 1 + \left(\substack{11\\xz}\right) \otimes \left(\substack{12\\yt}\right) + \left(\substack{211\\yxz}\right) \otimes \left(\substack{1\\t}\right) + 1 \otimes \left(\substack{2113\\yxzt}\right) .
\end{eqnarray*}

In other words, if $(\sigma,d)$ and $(\tau,d')$ are decorated surjections of respective degrees $k$ and $l$:
\begin{eqnarray*}
(\sigma,d).(\tau,d')=\sum_{\substack{\gamma = \gamma_{1} \hdots \gamma_{k+l} \\ pack(\gamma_{1} \hdots \gamma_{k}) = \sigma , \ pack(\gamma_{k+1} \hdots \gamma_{k+l})= \tau}}(\gamma , d \otimes d'),
\end{eqnarray*}
where $d \otimes d'$ is defined by $(d \otimes d')(i)=d(i)$ if $1 \leq i \leq k$ and $(d \otimes d')(k+j)=d'(j)$ if $1 \leq j \leq l$.
If $(\sigma,d)$ is a decorated surjection of degree $n$:
\begin{eqnarray*}
\Delta_{\WQSym^{\mathcal{D}}}((\sigma,d))=\sum_{0 \leq k \leq \max (\sigma)} (\sigma_{| [1,k]} , d') \otimes (\sigma_{| [k+1, \max (\sigma)]}, d'').
\end{eqnarray*}
where $d'$ and $d$ take the same values on $ \sigma^{-1}(\{1,\hdots ,k \})$ and $d''$ and $d$ take the same values on $ \sigma^{-1}(\{k+1,\hdots , \max (\sigma) \})$.

In some sense, a $\mathcal{D}$-decorated surjection can be seen as a packed word with a preorder on the set of its letters.\\

Suppose that $\mathcal{D}$ is equipped with an associative and commutative product $ \left[ \cdot, \cdot \right]: (a,b) \in \mathcal{D}^{2} \rightarrow [ab] \in \mathcal{D}$. We define by induction $\left[ \cdot, \cdot \right]^{(k)}$:
$$ \left[ \cdot, \cdot \right]^{(0)} = Id , \ \left[ \cdot, \cdot \right]^{(1)} = \left[ \cdot, \cdot \right] \mbox{ and }\left[ \cdot, \cdot \right]^{(k)} = \left[ \cdot, \left[ \cdot, \cdot \right]^{(k-1)} \right] .$$

We can now define the quasi-shuffle Hopf algebra $\Csh^{\mathcal{D}}$ (see \cite{Hoffman00}). $\Csh^{\mathcal{D}}$ is, as a vector space, generated by the set of $\mathcal{D}$-words.\\

Let $\varphi$ be the surjective linear map from $\WQSym^{\D}$ to $\Csh^{\D}$ defined, for $(\sigma,d)$ a decorated surjection of maximum $k$, by $\varphi((\sigma,d)) = (w_{1} \ldots w_{k})$ where $w_{j} = \left[ d(i_{1}) \ldots d(i_{p}) \right]^{(p)} $ with $ \sigma^{-1} (j) = \{i_{1}, \ldots ,i_{p} \} $. For instance,
\begin{eqnarray*}
\varphi \left( \left( \substack{2114324\\yxztvwu}\right) \right) = \left( [xz] \ [yw] \ v \ [tu] \right) 
\end{eqnarray*}

{\bf Notations.} {Let $k,l$ be integers. A $(k,l)$-quasi-shuffle of type $r$ is a surjective map $\zeta: \{1, \hdots,k+l\} \twoheadrightarrow \{1,\hdots,k+l-r\}$ such that
\begin{eqnarray*}
\left\lbrace \begin{array}{l}
\zeta (1) < \hdots < \zeta (k), \\
\zeta (k+1) < \hdots < \zeta (k+l).
\end{array} \right. 
\end{eqnarray*}
Remark that $\zeta^{-1}(j)$ contains $1$ or $2$ elements for all $1 \leq j \leq k+l-r$. The set of $(k,l)$-quasi-shuffles of type $r$ is denoted by $Csh(p,q,r)$. Remark that $Csh(k,l,0) = Sh(k,l)$.\\

\noindent $\varphi$ define a Hopf algebra structure on $\Csh^{\D}$:
\begin{itemize}
\item The product $\shuffle \hspace{-.34cm} - $ of $\Csh^{\D}$ is given in the following way: if $(v_{1} \ldots v_{k})$ is a $\mathcal{D}$-word of degree $k$, $(v_{k+1} \hdots v_{k+l})$ is a $\mathcal{D}$-word of degree $l$, then
\begin{eqnarray*}
(v_{1} \hdots v_{k}) \shuffle \hspace{-.43cm} - (v_{k+1} \hdots v_{k+l}) = \sum_{r \geq 0} \sum_{\zeta \in Csh(k,l,r)} (w_{1} \hdots w_{k+l-r}) ,
\end{eqnarray*}
where $w_{j} = v_{i} $ if $\zeta^{-1}(j) = \{ i \} $ and $w_{j} = [v_{i_{1}} v_{i_{2}}]$ if $\zeta^{-1}(j) = \{ i_{1},i_{2} \}$.
\item The coproduct $\Delta_{\Csh^{\D}}$ of $\Csh^{\D}$ is given on any $\D$-word $v = (v_{1} \hdots v_{n})$ by
\begin{eqnarray*}
\Delta_{\Csh^{\D}}(v) = \sum_{i=0}^{n} (v_{1} \hdots v_{i}) \otimes (v_{i+1} \hdots v_{n}) .
\end{eqnarray*}
This is the same coproduct as for $\Sh^{\D}$.
\end{itemize}
\vspace{0.5cm}

{\bf Example.} {If $(v_{1} v_{2})$ and $(v_{3} v_{4})$ are two $\D$-words,
\begin{eqnarray*}
(v_{1} v_{2}) \shuffle \hspace{-.43cm} - (v_{3} v_{4}) & = & (v_{1} v_{2} v_{3} v_{4}) + (v_{1} v_{3} v_{2} v_{4}) + (v_{3} v_{1} v_{2} v_{4}) + (v_{1} v_{3} v_{4} v_{2}) \\
& & + (v_{3} v_{1} v_{4} v_{2}) + (v_{3} v_{4} v_{1} v_{2}) + (v_{1} \left[ v_{2} v_{3} \right] v_{4}) + (\left[ v_{1} v_{3} \right] v_{2} v_{4}) \\
& & + (v_{1} v_{3} \left[ v_{2} v_{4} \right]) + (v_{3} \left[ v_{1} v_{4} \right] v_{2}) + (\left[ v_{1} v_{3} \right] \left[ v_{2} v_{4} \right])
\end{eqnarray*}
}

\section{Preordered forests}

\subsection{Preordered and heap-preordered forests}

A preorder is a binary, reflexive and transitive relation. In particular, an antisymmetric preorder is an order. A preorder is total if two elements are always comparable. We introduce another version of ordered forests, the preordered forests.

\begin{defi}
A preordered forest $F$ is a rooted forest $F$ with a total preorder on $V(F)$. The set of preordered forests is denoted by $ \mathbb{F}_{\hh_{po}} $ and the $ \mathbb{K} $-vector space generated by $ \mathbb{F}_{\hh_{po}} $ is denoted by $ \hh_{po} $.
\end{defi}

{\bf Remarks and notations.} {\begin{enumerate}
\item Let $F$ be a preordered forest. We denote by $ \leq $ the total preorder on $V(F)$. Remark that the antisymmetric relation "$x \leq y$ and $y \leq x$" is an equivalence relation denoted by $\mathcal{R}$ and the quotient set $V(F)/\mathcal{R}$ is totally ordered. We denote by $q$ the cardinality of this quotient set. Let $\overline{\sigma}$ be the unique increasing map from $V(F)/\mathcal{R}$ to $\{1 ,\hdots ,q \}$. There exists a unique surjection $ \sigma:V(F) \rightarrow \{1, \hdots ,q \}$, compatible with the equivalence $\mathcal{R}$, such that the induced map on $V(F)/\mathcal{R}$ is $\overline{\sigma}$. In the sequel, we shall note $q= \max(F) $ (and we have always $ q \leq \left| F \right|_{v} $).

Reciprocally, if $F$ is a rooted forest and if $\sigma : V(F) \rightarrow \{1, \hdots , q \}$ is a surjection, $q \leq \left| F \right|_{v} $, then $\sigma$ define a total preorder on $V(F)$ and $F$ is a preordered forest.

As in the ordered case, we shall denote a preordered forest by $F$ or $(F,\sigma)$.
\item An ordered forest is also a preordered forest. Conversely, a preordered forest $(F,\sigma)$ is an ordered forest if $\left| F \right|_{v} = \max(F)$.
\end{enumerate}
}
\vspace{0.5cm}

{\bf Examples.} {Preordered forests of vertices degree $ \leq 3 $:
$$1 , \tdun{$1$} ,\tdun{$1$} \tdun{$1$} , \tdun{$1$}\tdun{$2$},\tddeux{$1$}{$1$},\tddeux{$1$}{$2$} , \tddeux{$2$}{$1$} ,\tdun{$1$}\tdun{$1$}\tdun{$1$} ,\tdun{$1$}\tdun{$1$}\tdun{$2$},\tdun{$1$}\tdun{$2$}\tdun{$2$}, \tdun{$1$}\tdun{$2$}\tdun{$3$}, \tdun{$1$} \tddeux{$1$}{$1$}, \tdun{$1$} \tddeux{$1$}{$2$}, \tdun{$1$} \tddeux{$2$}{$1$} , \tdun{$1$} \tddeux{$2$}{$2$}, \tdun{$1$} \tddeux{$2$}{$3$}, \tdun{$1$} \tddeux{$3$}{$2$}, $$

$$\tdun{$2$} \tddeux{$1$}{$1$}, \tdun{$2$} \tddeux{$1$}{$2$}, \tdun{$2$} \tddeux{$2$}{$1$}, \tdun{$2$} \tddeux{$1$}{$3$}, \tdun{$2$} \tddeux{$3$}{$1$}, \tdun{$3$} \tddeux{$1$}{$2$}, \tdun{$3$} \tddeux{$2$}{$1$},
\tdtroisun{$1$}{$1$}{$1$}, \tdtroisun{$1$}{$2$}{$1$}, \tdtroisun{$2$}{$1$}{$1$}, \tdtroisun{$1$}{$2$}{$2$}, \tdtroisun{$2$}{$2$}{$1$}, \tdtroisun{$1$}{$3$}{$2$}, \tdtroisun{$2$}{$3$}{$1$}, \tdtroisun{$3$}{$2$}{$1$}, 
\tdtroisdeux{$1$}{$1$}{$1$}, \tdtroisdeux{$1$}{$1$}{$2$},$$

$$\tdtroisdeux{$1$}{$2$}{$1$}, \tdtroisdeux{$2$}{$1$}{$1$}, \tdtroisdeux{$1$}{$2$}{$2$}, \tdtroisdeux{$2$}{$1$}{$2$}, \tdtroisdeux{$2$}{$2$}{$1$}, \tdtroisdeux{$1$}{$2$}{$3$}, \tdtroisdeux{$1$}{$3$}{$2$}, \tdtroisdeux{$2$}{$1$}{$3$}, \tdtroisdeux{$3$}{$1$}{$2$}, \tdtroisdeux{$2$}{$3$}{$1$}, \tdtroisdeux{$3$}{$2$}{$1$} .$$
}
\\

Let $ (F, \sigma^{F}) $ and $ (G, \sigma^{G}) $ be two preordered forests with $ \sigma^{F} : V(F) \rightarrow \{1, \hdots , q \} $, $ \sigma^{G} : V(G) \rightarrow \{1, \hdots , r \} $, $ q = max(F) $ and $ r = max (G)$. Then $FG$ is also a preordered forest $ (F G , \sigma^{FG}) $ where
\begin{eqnarray} \label{prodforetpreord}
\sigma^{FG} = \sigma^{F} \otimes \sigma^{G} :
\left\lbrace \begin{array}{rcl}
V(F) \bigcup V(G) & \rightarrow & \{1 , \hdots , q+r \} \\
a \in V(F) & \mapsto & \sigma^{F}(a) \\
a \in V(G) & \mapsto & \sigma^{G}(a) + q .
\end{array}
\right.
\end{eqnarray}
In other terms, we keep the initial preorder on the vertices of $F$ and $G$ and we assume that the vertices of $F$ are smaller than the vertices of $G$. In this way, we define a noncommutative product on the set of preordered forests. For example, the product of $ \tddeux{$1$}{$3$} \tdun{$2$} $ and $ \tdtroisun{$1$}{$2$}{$1$} $ gives $ \tddeux{$1$}{$3$} \tdun{$2$} \tdtroisun{$4$}{$5$}{$4$} $, whereas the product of $\tdtroisun{$1$}{$2$}{$1$}$ and $\tddeux{$1$}{$3$} \tdun{$2$}$ gives $\tdtroisun{$1$}{$2$}{$1$} \tddeux{$3$}{$5$} \tdun{$4$}$. Remark that, if $F$ and $G$ are two preordered forests, $ \max(FG) = \max(F) + \max(G)$. This product is linearly extended to $ \hh_{po} $, which in this way becomes an algebra, gradued by the number of vertices.
\\

{\bf Remark.} {The formula (\ref{prodforetpreord}) on the preordered forests extends the formula (\ref{prodforetord}) on the ordered forests.}
\\

We give some numerical values: if $ f^{\hh_{po}}_{n} $ is the number of preordered forests of vertices degree $ n $,

$$\begin{array}{c|c|c|c|c|c|c}
n&0&1&2&3&4&5 \\
\hline f_{n}^{\hh_{po}} &1&1&5&38&424&6284
\end{array}$$

If $ F $ is a preordered forest, then any subforest $G$ of $ F $ is also preordered: the total preorder on $V(G)$ is the restriction of the total preorder of $V(F)$. So we can define a coproduct $ \Delta_{\hh_{po}} : \hh_{po} \rightarrow \hh_{po} \otimes \hh_{po} $ on $ \hh_{po} $ in the following way: for all $ F \in \mathbb{F}_{\hh_{po}} $,
$$ \Delta_{\hh_{po}}(F)=\sum_{\boldsymbol{v} \models V(F)} Lea_{\boldsymbol{v}} (F) \otimes Roo_{\boldsymbol{v}} (F) .$$

For example,
\begin{eqnarray*}
\Delta_{\hh_{po}} ( \tdquatredeux{$1$}{$3$}{$2$}{$1$}) = \tdquatredeux{$1$}{$3$}{$2$}{$1$} \otimes 1 + 1 \otimes \tdquatredeux{$1$}{$3$}{$2$}{$1$} + \tdun{$1$} \otimes \tdtroisun{$1$}{$3$}{$2$} + \tddeux{$2$}{$1$} \otimes \tddeux{$1$}{$2$} + \tdun{$1$} \otimes \tdtroisdeux{$1$}{$2$}{$1$} + \tdun{$1$} \tdun{$2$} \otimes \tddeux{$1$}{$2$} + \tddeux{$2$}{$1$} \tdun{$3$} \otimes \tdun{$1$} .
\end{eqnarray*}

With this coproduct, $ \hh_{po} $ is a Hopf algebra. Remark that $ \hh_{o} $ is a Hopf subalgebra of $ \hh_{po} $.

\begin{defi}
A heap-preordered forest is a preordered forest $F$ such that if $a,b \in V(F)$, $a \neq b$ and $a \twoheadrightarrow b$, then $a$ is strictly greater than $b$ for the total preorder on $V(F)$. The set of heap-preordered forests is denoted by $\mathbb{F}_{\hh_{hpo}}$
\end{defi}

{\bf Examples.} {Heap-preordered forests of vertices degree $ \leq 3 $:
$$ 1 , \tdun{$1$} ,\tdun{$1$} \tdun{$1$} , \tdun{$1$}\tdun{$2$},\tddeux{$1$}{$2$} ,\tdun{$1$}\tdun{$1$}\tdun{$1$} ,\tdun{$1$}\tdun{$1$}\tdun{$2$},\tdun{$1$}\tdun{$2$}\tdun{$2$}, \tdun{$1$}\tdun{$2$}\tdun{$3$}, \tdun{$1$} \tddeux{$1$}{$2$} , \tdun{$1$} \tddeux{$2$}{$3$}, \tdun{$2$} \tddeux{$1$}{$2$}, \tdun{$2$} \tddeux{$1$}{$3$}, \tdun{$3$} \tddeux{$1$}{$2$},\tdtroisun{$1$}{$2$}{$2$}, \tdtroisun{$1$}{$3$}{$2$}, \tdtroisdeux{$1$}{$2$}{$3$}.$$
}

\begin{defi} \label{ordreprelineaire}
Let $ F $ be a nonempty rooted forest and $ q $ an integer $ \leq \left| F \right|_{v} $. A linear preorder is a surjection $ \sigma : V(F) \rightarrow \{1, \hdots , q \} $ such that if $ a , b \in V(F) $, $a \neq b$ and $ a \twoheadrightarrow b $ then $ \sigma (a) > \sigma (b) $. We denote by $ \mathcal{O}_{p}(F) $ the set of linear preorders on the nonempty rooted forest $ F $.
\end{defi}

{\bf Remarks.} {If $(F,\sigma)$ is a heap-preordered forest, the surjection $\sigma : V(F) \rightarrow \{1, \hdots , \max(F) \}$ is a linear preorder on $F$. Reciprocally, if $F$ is a rooted forest and $\sigma \in \mathcal{O}_{p}(F)$, then $\sigma$ define a total preorder on $V(F)$ such that $(F,\sigma)$ is a heap-preordered forest.}
\\

If $F$ and $G$ are two heap-preordered forests, then $FG$ is also heap-preordered. Moreover, any subforest $G$ of a heap-preordered forest $ F $ is also a heap-preordered forest by restriction on $V(G)$ of the total preorder of $V(F)$. So the subspace $\hh_{hpo}$ of $\hh_{po}$ generated by the heap-preordered forests is a graded Hopf subalgebra of $\hh_{po}$.\\

We give some numerical values: if $ f^{\hh_{hpo}}_{n} $ is the number of preordered forests of vertices degree $ n $,

$$\begin{array}{c|c|c|c|c|c|c}
n&0&1&2&3&4&5 \\
\hline f_{n}^{\hh_{hpo}} &1&1&3&12&64&428
\end{array}$$

We have the following diagram
\begin{eqnarray} \label{diaginclusion}
\xymatrix{
\hh_{NCK} \ar@{^{(}->}[r] & \hh_{ho} \ar@{^{(}->}[d] \ar@{^{(}->}[r] & \hh_{o} \ar@{^{(}->}[d]  \\
& \hh_{hpo} \ar@{^{(}->}[r] &  \hh_{po}
}
\end{eqnarray}
where the arrows $ \hookrightarrow $ are injective morphisms of Hopf algebras (for the cut coproduct).

\subsection{A morphism from $\hh_{po}$ to $\WQSyms$}

In this section, we give a similar result of proposition \ref{dehdansfqsym} in the preordered case.

\begin{defi} \label{defivarphipreordered}
Let $(F,\sigma)$ be a nonempty preordered forest of vertices degree $n$ and $\tau \in Surj_{n}$. Then we denote by $\mathbb{S}_{(F,\sigma)}^{\tau}$ the set of bijective maps $\varphi : V(F) \rightarrow \{1, \hdots , n \}$ such that:
\begin{enumerate}
\item if $v \in V(F)$, then $\sigma (v) = \tau(\varphi (v))$,
\item if $v,v' \in V(F)$, $v' \twoheadrightarrow v$, then $\varphi(v) \geq \varphi (v')$.
\end{enumerate}
\end{defi}

{\bf Remark.} {\begin{enumerate}
\item If $ \max (F) \neq \max (\tau)$, then $\mathbb{S}_{(F,\sigma)}^{\tau} = \emptyset$.
\item Let $F,G \in \mathbb{F}_{\hh_{po}}$, $\left| F \right|_{v} = k$, $\left| G \right|_{v} = l$. If $\varphi_{1} : V(F) \rightarrow \{1,\hdots,k\}$ and $\varphi_{2} : V(G) \rightarrow \{1,\hdots,l\}$ are two bijective maps  and $\zeta \in Sh(k,l)$, then $ \zeta \circ (\varphi_{1} \otimes \varphi_{2}) : V(FG) \rightarrow \{1,\hdots,k+l\}$, where $\varphi_{1} \otimes \varphi_{2}$ is defined in formula (\ref{prodforetord}), is also a bijective map. Similary, considering a bijective map $\varphi :V(FG) \rightarrow \{1, \hdots,k+l\} $ and $\zeta \in Sh(k,l)$. Then $\varphi$ can be uniquely written as $\zeta \circ (\varphi_{1} \otimes \varphi_{2})$, where $\varphi_{1} : V(F) \rightarrow \{1,\hdots,k\}$ and $\varphi_{2} : V(G) \rightarrow \{1,\hdots,l\}$ are two bijective maps.
\end{enumerate}}

\begin{theo} \label{dehdanswgsym}
Let us define:
\begin{eqnarray} \label{Theta}
\Phi : \left\lbrace 
\begin{array}{rcl}
\hh_{po} & \rightarrow & \WQSyms \\
(F,\sigma) \in \mathbb{F}_{\hh_{po}} & \mapsto & \displaystyle\sum_{\tau \in Surj_{\left| F \right|_{v}}} {\rm card}(\mathbb{S}_{(F,\sigma)}^{\tau}) \ \tau .
\end{array} \right.
\end{eqnarray}
Then $\Phi : \hh_{po} \rightarrow \WQSyms$ is a Hopf algebra morphism, homogeneous of degree 0.
\end{theo}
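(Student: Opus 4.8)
The plan is to show directly that $\Phi$ is a bialgebra morphism which is homogeneous of degree $0$; since any bialgebra morphism between Hopf algebras commutes with the antipodes, this yields the statement (and the strategy is parallel to that of Proposition~\ref{dehdansfqsym}). Homogeneity is clear, as $\tau$ runs over $Surj_{|F|_v}$ while the degree-$n$ part of $\WQSyms$ is spanned by $Surj_n$; compatibility with the units ($\Phi(1)=1$, the empty bijection being the unique element of $\mathbb{S}^{1}_{1}$) and with the counits (both vanish outside degree $0$) is immediate. So it remains to prove multiplicativity and comultiplicativity.

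\textbf{Multiplicativity.} Let $(F,\sigma^F)$, $(G,\sigma^G)$ be preordered forests, $k=|F|_v$, $l=|G|_v$. I would use the second item of the Remark following Definition~\ref{defivarphipreordered}: every bijection $\varphi\colon V(FG)\to\{1,\dots,k+l\}$ is uniquely of the form $\zeta\circ(\varphi_1\otimes\varphi_2)$ with $\zeta\in Sh(k,l)$ and $\varphi_1\colon V(F)\to\{1,\dots,k\}$, $\varphi_2\colon V(G)\to\{1,\dots,l\}$ bijections. In $FG$ one has $v'\twoheadrightarrow v$ only when $v,v'$ lie in the same factor, and $\zeta$ is increasing on $\{1,\dots,k\}$ and on $\{k+1,\dots,k+l\}$; hence condition~(2) of Definition~\ref{defivarphipreordered} for $\varphi$ is equivalent to condition~(2) for $\varphi_1$ and for $\varphi_2$ separately. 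Given a target $\tau\in Surj_{k+l}$, set $\mu=(\tau(\zeta(1)),\dots,\tau(\zeta(k)))$ and $\nu=(\tau(\zeta(k+1))-q,\dots,\tau(\zeta(k+l))-q)$ with $q=\max(\mu)$; condition~(1) for $\varphi$ then forces precisely that $\mu,\nu$ are genuine packed words (because $\sigma^F,\sigma^G$ are surjective), that $\tau=(\mu\otimes\nu)\circ\zeta^{-1}$, and that $\varphi_1\in\mathbb{S}^{\mu}_F$, $\varphi_2\in\mathbb{S}^{\nu}_G$. This yields, for each $\tau$, a bijection between $\mathbb{S}^{\tau}_{FG}$ and the set of triples $(\zeta,\varphi_1,\varphi_2)$ with $\zeta\in Sh(k,l)$, $\varphi_1\in\mathbb{S}^{\mu}_F$, $\varphi_2\in\mathbb{S}^{\nu}_G$, $(\mu\otimes\nu)\circ\zeta^{-1}=\tau$; comparing with the product of $\WQSyms$ shows that $\mathrm{card}(\mathbb{S}^{\tau}_{FG})$ equals the coefficient of $\tau$ in $\Phi(F)\Phi(G)$, i.e. $\Phi(FG)=\Phi(F)\Phi(G)$.

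\textbf{Comultiplicativity.} Fix $(F,\sigma)$ of vertices degree $n$; this is the heart of the proof. The key observation is that if $\varphi\in\mathbb{S}^{\tau}_{F}$ and $0\le k\le n$, then $S:=\varphi^{-1}(\{1,\dots,k\})$ is upward closed: if $w\in S$ and $w'\twoheadrightarrow w$ then $\varphi(w')\le\varphi(w)\le k$ by condition~(2). By the Connes--Kreimer correspondence, the upward closed subsets of $V(F)$ are exactly the vertex sets $V(Lea_{\boldsymbol{v}}(F))$ of leaf parts of admissible cuts $\boldsymbol{v}\models V(F)$, the cut being recovered as the set of minimal vertices of $S$; write $F^{(1)}=Lea_{\boldsymbol{v}}(F)$ and $F^{(2)}=Roo_{\boldsymbol{v}}(F)$ with their restricted preorders. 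Then $\varphi|_{S}$ and $m\mapsto\varphi(m)-k$ on $V(F^{(2)})$ are bijections satisfying condition~(2) on $F^{(1)}$, resp. $F^{(2)}$ (a path between two vertices of $S$, resp. of its complement, stays there), hence lie in $\mathbb{S}^{\mu}_{F^{(1)}}$, resp. $\mathbb{S}^{\nu}_{F^{(2)}}$, for uniquely determined packed words $\mu,\nu$; and since packing the restriction of $\sigma$ to $S$ (to its complement) is the same as packing the first $k$ (the last $n-k$) letters of $\tau$, one finds $\mu=\tau_1^{(k)}$ and $\nu=\tau_2^{(k)}$, the two components appearing in $\Delta_{\WQSyms}(\tau)$. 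Conversely, from a cut $\boldsymbol{v}$ and maps $\varphi_1\in\mathbb{S}^{\mu}_{F^{(1)}}$, $\varphi_2\in\mathbb{S}^{\nu}_{F^{(2)}}$, glueing $\varphi_1$ on $V(F^{(1)})$ and $\varphi_2+k$ (with $k=|F^{(1)}|_v$) on $V(F^{(2)})$ gives a bijection $\varphi$ of $V(F)$; with $\tau:=\sigma\circ\varphi^{-1}\in Surj_n$ one checks, using upward closedness of $S$ and that $\boldsymbol{v}$-paths do not cross, that $\varphi\in\mathbb{S}^{\tau}_F$ and that $\mu=\tau_1^{(k)}$, $\nu=\tau_2^{(k)}$. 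The two constructions are mutually inverse, and summing over all $\varphi$, $\tau$, $k$ yields $\Delta_{\WQSyms}\circ\Phi(F)=(\Phi\otimes\Phi)\circ\Delta_{\hh_{po}}(F)$.

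\textbf{Main difficulty.} Multiplicativity is essentially mechanical once one has the decomposition $\varphi=\zeta\circ(\varphi_1\otimes\varphi_2)$. The delicate part is comultiplicativity: one must verify that $\varphi^{-1}(\{1,\dots,k\})$ is exactly the vertex set of the leaf part of a (unique) admissible cut, and, above all, keep precise track of the packing operation — namely that restricting-and-packing the preorder to the leaf, resp. root, part of a cut corresponds to cutting-and-packing the word $\tau$ at the right position — so that the two sides of the coproduct identity become indexed by the same combinatorial data.
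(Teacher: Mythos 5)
Your proposal is correct and follows essentially the same route as the paper: multiplicativity via the unique decomposition $\varphi=\zeta\circ(\varphi_1\otimes\varphi_2)$ with $\zeta\in Sh(k,l)$, and comultiplicativity via the bijection between pairs $(\varphi,k)$ with $\varphi\in\mathbb{S}^{\tau}_{(F,\sigma)}$ and triples (admissible cut, $\varphi_1$, $\varphi_2$), matching $\tau_1^{(k)},\tau_2^{(k)}$ with the packed restrictions of $\sigma$. Your explicit identification of $\varphi^{-1}(\{1,\dots,k\})$ as the leaf part of the admissible cut given by its minimal vertices is exactly what the paper uses, just stated a bit more carefully.
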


{\bf Examples.} {\begin{itemize}
\item In vertices degree 1: $\Phi(\tdun{$1$}) = (1)$.
\item In vertices degree 2:
\begin{eqnarray*}
\Phi (\tdun{$1$} \tdun{$1$}) = 2 (1 1) , \hspace{0.5cm} \Phi (\tdun{$1$} \tdun{$2$}) = (1 2) + (2 1) , \hspace{0.5cm} \Phi (\tddeux{$b$}{$a$}) = (a b).
\end{eqnarray*}
\item In vertices degree 3:
\begin{eqnarray*}
\begin{array}{rclrcl}
\Phi (\tdun{$1$} \tdun{$1$} \tdun{$1$}) & = & 6 (1 1 1) & \Phi (\tddeux{$1$}{$2$} \tdun{$2$}) & = & (2 1 2) + 2 (2 2 1) \\
\Phi (\tdtroisun{$2$}{$1$}{$2$}) & = & (1 2 2) + (2 1 2) & \Phi (\tdun{$2$} \tddeux{$3$}{$1$}) & = & (2 1 3) + (1 2 3) + (1 3 2) \\
\Phi (\tdtroisdeux{$1$}{$3$}{$2$}) & = & (2 3 1)  & \Phi (\tdun{$1$} \tddeux{$3$}{$2$}) & = & (1 2 3) + (2 1 3) + (2 3 1) \\
\Phi (\tdtroisun{$1$}{$2$}{$2$}) & = & 2 (2 2 1) & \Phi (\tdun{$1$} \tdun{$1$} \tdun{$2$}) & = & 2 \left[ (1 1 2) + (1 2 1) + (2 1 1) \right] 
\end{array}
\end{eqnarray*}
\end{itemize}
\vspace{0.5cm}

\begin{proof}
Obviously, $\Phi$ is homogeneous of degree 0. Let $(F,\sigma^{F}), (G,\sigma^{G}) \in \mathbb{F}_{\hh_{po}}$, $\left| F \right|_{v} = k$, $\left| G \right|_{v} = l$ and $\tau \in Surj_{k+l}$. $\tau$ can be uniquely written as $\tau = (\tau_{1} \otimes \tau_{2}) \circ \zeta^{-1}$ with $\tau_{1} \in Surj_{k}$, $\tau_{2} \in Surj_{l}$ and $\zeta \in Sh(k,l)$.\\

Let $\varphi \in \mathbb{S}_{(FG,\sigma^{FG})}^{\tau}$. Then $\varphi$ can be uniquely written as $\zeta \circ (\varphi_{1} \otimes \varphi_{2})$ with $\varphi_{1} : V(F) \rightarrow \{1, \hdots , k \}$ and $\varphi_{2}: V(G) \rightarrow \{1, \hdots ,l \}$ two bijective maps.

\begin{enumerate}
\item \begin{enumerate}
\item If $v \in V(F)$, then 
$$\sigma^{F}(v) = \sigma^{FG}(v) = \tau (\varphi(v)) = (\tau_{1} \otimes \tau_{2}) \circ \zeta^{-1} \circ \zeta \circ (\varphi_{1} \otimes \varphi_{2})(v) = \tau_{1}(\varphi_{1}(v)) .$$
Note that with this equality, we also have that $\max (F) = \max (\tau_{1})$.
\item If $v \in V(G)$, then
\begin{eqnarray*}
\sigma^{G}(v) + \max (F) & = & \sigma^{FG}(v) = \tau (\varphi(v)) = (\tau_{1} \otimes \tau_{2}) \circ \zeta^{-1} \circ \zeta \circ (\varphi_{1} \otimes \varphi_{2})(v) \\
& = & \tau_{2}(\varphi_{2}(v)) + \max (\tau_{1}) .
\end{eqnarray*}
As $ \max (F) = \max (\tau_{1})$, $\sigma^{G}(v) = \tau_{2}(\varphi_{2}(v))$.
\end{enumerate}
\item \begin{enumerate}
\item If $v' \twoheadrightarrow v$ in $F$, then $v'\twoheadrightarrow v$ in $FG$, so:
\begin{eqnarray*}
\varphi (v) & \geq & \varphi (v') \\
\zeta \circ (\varphi_{1} \otimes \varphi_{2})(v) & \geq & \zeta \circ (\varphi_{1} \otimes \varphi_{2})(v') \\
\zeta \left( \varphi_{1} (v) \right) & \geq & \zeta \left( \varphi_{1}(v') \right) \\
\varphi_{1} (v) & \geq & \varphi_{1}(v'),
\end{eqnarray*}
as $\zeta$ is increasing on $\{1,\hdots,k\}$.
\item If $v' \twoheadrightarrow v$ in $G$, then $v'\twoheadrightarrow v$ in $FG$, so:
\begin{eqnarray*}
\varphi (v) & \geq & \varphi (v') \\
\zeta \circ (\varphi_{1} \otimes \varphi_{2})(v) & \geq & \zeta \circ (\varphi_{1} \otimes \varphi_{2})(v') \\
\zeta \left( k + \varphi_{2} (v) \right) & \geq & \zeta \left( k + \varphi_{2}(v') \right) \\
\varphi_{2} (v) & \geq & \varphi_{2}(v'),
\end{eqnarray*}
as $\zeta$ is increasing on $\{k+1,\hdots,k+l\}$.
\end{enumerate}
\end{enumerate}
So $\varphi_{1} \in \mathbb{S}^{\tau_{1}}_{(F,\sigma^{F})}$ and $\varphi_{2} \in \mathbb{S}^{\tau_{2}}_{(G,\sigma^{G})}$.\\

Conversely, if $\varphi = \zeta \circ (\varphi_{1} \otimes \varphi_{2})$, with $\varphi_{1} \in \mathbb{S}^{\tau_{1}}_{(F,\sigma^{F})}$ and $\varphi_{2} \in \mathbb{S}^{\tau_{2}}_{(G,\sigma^{G})}$, the same computations shows that $\varphi \in \mathbb{S}^{(\tau_{1} \otimes \tau_{2}) \circ \zeta^{-1}}_{(FG,\sigma^{FG})}$.

So
$$ {\rm card} (\mathbb{S}_{(FG,\sigma^{FG})}^{\tau}) = {\rm card} (\mathbb{S}_{(F,\sigma^{F})}^{\tau_{1}}) \times {\rm card} (\mathbb{S}_{(G,\sigma^{G})}^{\tau_{2}})$$
and
\begin{eqnarray*}
\Phi ((FG,\sigma^{FG})) & = & \sum_{\tau \in Surj_{k+l}} \mbox{card}(\mathbb{S}_{(FG,\sigma^{FG})}^{\tau}) \ \tau \\
& = & \sum_{\zeta \in Sh(k,l)} \sum_{\tau_{1} \in Surj_{k}} \sum_{\tau_{2} \in Surj_{l}} \mbox{card}(\mathbb{S}_{(FG,\sigma^{FG})}^{(\tau_{1} \otimes \tau_{2}) \circ \zeta^{-1}}) \ (\tau_{1} \otimes \tau_{2}) \circ \zeta^{-1} \\
& = & \sum_{\zeta \in Sh(k,l)} \sum_{\tau_{1} \in Surj_{k}} \sum_{\tau_{2} \in Surj_{l}} \mbox{card}(\mathbb{S}_{(F,\sigma^{F})}^{\tau_{1}}) \times \mbox{card}(\mathbb{S}_{(G,\sigma^{G})}^{\tau_{2}}) \ (\tau_{1} \otimes \tau_{2}) \circ \zeta^{-1} \\
& = & \left( \sum_{\tau_{1} \in Surj_{k}} \mbox{card}(\mathbb{S}_{(F,\sigma^{F})}^{\tau_{1}}) \ \tau_{1} \right) \left( \sum_{\tau_{2} \in Surj_{l}} \mbox{card}(\mathbb{S}_{(G,\sigma^{G})}^{\tau_{2}}) \ \tau_{2} \right) \\
& = & \Phi ((F,\sigma^{F})) \Phi ((G,\sigma^{G})) .
\end{eqnarray*}
So $\Phi$ is an algebra morphism.\\

Let $(F,\sigma) \in \mathbb{F}_{\hh_{po}}$ be a preordered forest such that $\left| F \right|_{v} = n$ and let $\boldsymbol{v}$ be an admissible cut of $F$. We obtain two preordered forests $(Lea_{\boldsymbol{v}}(F),\sigma_{1})$ and $(Roo_{\boldsymbol{v}}(F),\sigma_{2})$. We set $k = \left| Lea_{\boldsymbol{v}}(F) \right|_{v}$ and $l = \left| Roo_{\boldsymbol{v}}(F) \right|_{v}$.

Let $\tau_{1} \in Surj_{k}$, $\tau_{2} \in Surj_{l}$ and $\varphi_{1} \in \mathbb{S}^{\tau_{1}}_{(Lea_{\boldsymbol{v}}(F),\sigma_{1})}$, $\varphi_{2} \in \mathbb{S}^{\tau_{2}}_{(Roo_{\boldsymbol{v}}(F),\sigma_{2})}$. We set $\varphi = \varphi_{1} \otimes \varphi_{2} $ and we define $\tau$ by $\tau = \sigma \circ \varphi^{-1}$. $\tau \in Surj_{n}$ and $ \max (\tau) = \max (F)$. Let us show that $\varphi \in \mathbb{S}^{\tau}_{(F,\sigma)}$.
\begin{enumerate}
\item By definition, $\tau = \sigma \circ \varphi^{-1}$. So $\sigma (v) = \tau (\varphi (v))$ for all $v \in V(F)$.
\item If $v' \twoheadrightarrow v$ in $F$, then three cases are possible:
\begin{enumerate}
\item $v$ and $v'$ belong to $V(Lea_{\boldsymbol{v}}(F))$. As $\varphi_{1} \in \mathbb{S}^{\tau_{1}}_{(Lea_{\boldsymbol{v}}(F),\sigma_{1})}$ , $\varphi_{1}(v) \geq \varphi_{1}(v')$. Then $\varphi (v) = (\varphi_{1} \otimes \varphi_{2})(v) = \varphi_{1}(v) \geq \varphi_{1}(v') = (\varphi_{1} \otimes \varphi_{2})(v') = \varphi (v')$.
\item $v$ and $v'$ belong to $V(Roo_{\boldsymbol{v}}(F))$. As $\varphi_{2} \in \mathbb{S}^{\tau_{2}}_{(Lea_{\boldsymbol{v}}(F),\sigma_{2})}$ , $\varphi_{2}(v) \geq \varphi_{2}(v')$. Then $\varphi (v) = (\varphi_{1} \otimes \varphi_{2})(v) = \varphi_{2}(v) + k \geq \varphi_{2}(v') + k = (\varphi_{1} \otimes \varphi_{2})(v') = \varphi (v')$.
\item $v'$ belong to $V(Lea_{\boldsymbol{v}}(F))$ and $v$ belong to $V(Roo_{\boldsymbol{v}}(F))$. Then $\varphi (v') = (\varphi_{1} \otimes \varphi_{2})(v') = \varphi_{1}(v') \in \{1, \hdots , k \}$ and $\varphi(v) = (\varphi_{1} \otimes \varphi_{2})(v) = \varphi_{2}(v) + k \in \{k+1, \hdots , k+l \}$. So $\varphi(v) > \varphi (v')$.
\end{enumerate}
In any case, $\varphi (v) \geq \varphi (v')$.
\end{enumerate}

Conversely, let $(F,\sigma) \in \mathbb{F}_{\hh_{po}}$ be a preordered forest of vertices degree $n$, $\tau \in Surj_{n}$ and $\varphi \in \mathbb{S}^{\tau}_{(F,\sigma)}$. Let $k \in \{0, \hdots ,n\}$ be an integer.

We set $\tau_{1}^{(k)}$ and $\tau_{2}^{(k)}$ the words obtained by cutting the word representing $\tau$ between the $k$-th and the $(k+1)$-th letter, and then packing the two obtained words.

Moreover, we define $\boldsymbol{v}$ a subset of $\varphi^{-1}(\{1, \hdots ,k \})$ such that $v \twoheadrightarrow w \hspace{-.7cm} / \hspace{.7cm}$ for any couple $(v,w)$ of two different elements of $\boldsymbol{v}$. Then $\boldsymbol{v} \models V(F)$ and we considere the two preordered forests $(Lea_{\boldsymbol{v}}(F),\sigma_{1}^{(k)})$ and $(Roo_{\boldsymbol{v}}(F),\sigma_{2}^{(k)})$. Remark that, with the second point of definition \ref{defivarphipreordered}, $V(Lea_{\boldsymbol{v}}(F)) = \varphi^{-1}(\{1, \hdots ,k \})$ and $V(Roo_{\boldsymbol{v}}(F)) = \varphi^{-1}(\{k+1, \hdots ,n \})$.

We set $\varphi_{1}^{(k)} : v \in V(Lea_{\boldsymbol{v}}(F)) \rightarrow \varphi (v) \in \{1, \hdots ,k\}$ and $\varphi_{2}^{(k)} : v \in V(Roo_{\boldsymbol{v}}(F)) \rightarrow \varphi (v) - k \in \{1, \hdots ,n-k\}$. Thus $\varphi = \varphi_{1}^{(k)} \otimes \varphi_{2}^{(k)}$.

Let us prove that $\varphi_{1}^{(k)} \in \mathbb{S}^{\tau_{1}^{(k)}}_{(Lea_{\boldsymbol{v}}(F),\sigma_{1}^{(k)})}$ and $\varphi_{2}^{(k)} \in \mathbb{S}^{\tau_{2}^{(k)}}_{(Roo_{\boldsymbol{v}}(F),\sigma_{2}^{(k)})}$.
\begin{enumerate}
\item \begin{enumerate}
\item If $v \in V(Lea_{\boldsymbol{v}}(F))$, $\varphi (v) = \varphi_{1}^{(k)}(v) \in \{1,\hdots,k\}$ and then
$$ \sigma_{1}^{(k)}(v) = pack \circ \sigma (v) = pack \circ \tau \circ \varphi (v) = \tau_{1}^{(k)} \circ \varphi_{1}^{(k)} (v) .$$
\item If $v \in V(Roo_{\boldsymbol{v}}(F))$, $\varphi (v) = \varphi_{2}^{(k)}(v) + k \in \{k+1, \hdots , n \}$ and then
$$ \sigma_{2}^{(k)}(v) = pack \circ \sigma (v) = pack \circ \tau \circ \varphi (v) = \tau_{2}^{(k)} \circ \varphi_{2}^{(k)}(v) .$$
\end{enumerate}
\item \begin{enumerate}
\item If $v' \twoheadrightarrow v$ in $Lea_{\boldsymbol{v}}(F)$, then $v' \twoheadrightarrow v$ in $F$ and $\varphi_{1}^{(k)}(v) = \varphi(v) \geq \varphi(v') = \varphi_{2}^{(k)}(v')$.
\item If $v' \twoheadrightarrow v$ in $Roo_{\boldsymbol{v}}(F)$, then $v' \twoheadrightarrow v$ in $F$ and $\varphi_{2}^{(k)}(v) = \varphi(v)-k \geq \varphi(v')-k = \varphi_{2}^{(k)}(v')$.
\end{enumerate}
\end{enumerate}

Hence, there is a bijection:
\begin{eqnarray*}
\left\lbrace  \begin{array}{rcl}
\mathbb{S}^{\tau}_{(F,\sigma)} \times \{0,\hdots ,\left| F \right|_{v} \} & \rightarrow & \displaystyle\bigsqcup_{\boldsymbol{v} \models V(F)} \mathbb{S}^{\tau_{1}^{(k)}}_{(Lea_{\boldsymbol{v}}(F),\sigma_{1}^{(k)})} \times \mathbb{S}^{\tau_{2}^{(k)}}_{(Roo_{\boldsymbol{v}}(F),\sigma_{2}^{(k)})} \\
(\varphi , k) & \mapsto & \left( \varphi_{1}^{(k)}, \varphi^{(k)}_{2} \right) .
\end{array}
\right.
\end{eqnarray*}
Finally,
\begin{eqnarray*}
& & \Delta_{\WQSyms} \circ \Phi ((F,\sigma)) \\
& = & \sum_{\tau \in Surj_{\left| F \right|_{v}}} \sum_{0 \leq k \leq n} {\rm card} (\mathbb{S}^{\tau}_{(F,\sigma)}) \ \tau_{1}^{(k)} \otimes \tau_{2}^{(k)} \\
& = & \sum_{\boldsymbol{v} \models V(F)} \sum_{\tau_{1} \in Surj_{\left| Lea_{\boldsymbol{v}}(F) \right|_{v}}} \sum_{\tau_{2} \in Surj_{\left| Roo_{\boldsymbol{v}}(F) \right|_{v}}} {\rm card} (\mathbb{S}^{\tau_{1}}_{(Lea_{\boldsymbol{v}}(F),\sigma_{1}^{(k)})}) \ \tau_{1} \otimes {\rm card} (\mathbb{S}^{\tau_{2}}_{(Roo_{\boldsymbol{v}}(F),\sigma_{2}^{(k)})} ) \ \tau_{2} \\
& = & (\Phi \otimes \Phi) \circ \Delta_{\hh_{po}} .
\end{eqnarray*}
So $\Phi$ is a coalgebra morphism.
\end{proof}

\begin{theo}
The restriction of $\Phi$ defined in formula (\ref{Theta}) to $\hh_{hpo}$ is an injection of graded Hopf algebras.
\end{theo}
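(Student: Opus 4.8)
The statement has two halves. That $\Phi_{|\hh_{hpo}}$ is a morphism of graded Hopf algebras requires no work: $\hh_{hpo}$ is a graded Hopf subalgebra of $\hh_{po}$, and by Theorem~\ref{dehdanswgsym} the map $\Phi\colon\hh_{po}\to\WQSyms$ is a Hopf algebra morphism homogeneous of degree $0$, so the same holds for its restriction. Everything thus reduces to injectivity, and since $\Phi$ is homogeneous it suffices to prove that for each $n$ the vectors $\Phi((F,\sigma))$, indexed by the heap-preordered forests with $\left|F\right|_{v}=n$, are linearly independent in $\WQSyms(n)=\mathbb{K}(Surj_{n})$.

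The plan is a triangularity (echelon) argument. To a heap-preordered forest $(F,\sigma)$ of vertices degree $n$ I would attach the packed word $w_{(F,\sigma)}\in Surj_{n}$ defined as the lexicographically smallest word occurring with nonzero coefficient in $\Phi((F,\sigma))$. Concretely $w_{(F,\sigma)}$ is read off by greedy ``leaf-stripping'': repeatedly delete from the current forest a $\twoheadrightarrow$-maximal vertex (a leaf) of minimal label, recording labels in the order of deletion, breaking ties among minimal leaves in favour of the deletion leading to the lexicographically smallest continuation. Such a stripping order is a bijection $\varphi\colon V(F)\to\{1,\dots,n\}$ satisfying conditions (1)--(2) of Definition~\ref{defivarphipreordered} for $\tau=w_{(F,\sigma)}$, so $w_{(F,\sigma)}$ does occur in $\Phi((F,\sigma))$, i.e.\ ${\rm card}(\mathbb{S}_{(F,\sigma)}^{w_{(F,\sigma)}})\geq 1$, and by minimality every $\tau$ occurring in $\Phi((F,\sigma))$ satisfies $w_{(F,\sigma)}\leq\tau$ lexicographically.

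The crux is the following \emph{Claim}: the assignment $(F,\sigma)\mapsto w_{(F,\sigma)}$ is injective on heap-preordered forests of each fixed vertices degree. Granting it, I would finish as follows. A word occurring in $\Phi((F,\sigma))$ has the same content (multiset of letters) as the multiset of labels of $(F,\sigma)$, so forests with different label multisets contribute to disjoint blocks of coordinates and it suffices to treat one multiset at a time; fix one and list the corresponding heap-preordered forests as $(F_{1},\sigma_{1}),(F_{2},\sigma_{2}),\dots$ with $w_{(F_{1},\sigma_{1})}<w_{(F_{2},\sigma_{2})}<\cdots$ lexicographically, which is possible by the Claim. Writing $c_{ij}$ for the coefficient of $w_{(F_{j},\sigma_{j})}$ in $\Phi((F_{i},\sigma_{i}))$, one has $c_{ii}\geq 1$, and $c_{ij}\neq 0$ forces $w_{(F_{j},\sigma_{j})}$ to occur in $\Phi((F_{i},\sigma_{i}))$, hence $w_{(F_{i},\sigma_{i})}\leq w_{(F_{j},\sigma_{j})}$, i.e.\ $i\leq j$. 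Thus $(c_{ij})$ is upper triangular with nonzero diagonal, the $\Phi((F_{i},\sigma_{i}))$ are linearly independent, and $\Phi_{|\hh_{hpo}}$ is injective. (It is essential here that $w_{(F,\sigma)}$ be taken lexicographically smallest rather than largest: a naive ``canonical word'' is not injective, e.g.\ the forest $\tdtroisun{$1$}{$2$}{$2$}$ and the product $\tdun{$1$}\tddeux{$1$}{$2$}$-type forests share their largest support words.)

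The remaining task, and the genuine obstacle, is the Claim, which I would prove by reconstructing $(F,\sigma)$ from $w:=w_{(F,\sigma)}$. Identify $V(F)$ with $\{1,\dots,n\}$ through the stripping order, so that vertex $i$ is deleted at step $i$ and $\sigma(i)=w_{i}$; this recovers $\sigma$ once $F$ is known. The edges of $F$ are then forced: a vertex $i$ becomes a leaf exactly when its last remaining $\twoheadrightarrow$-predecessor is deleted, so the parent of $i$ (if any) must be the vertex deleted at a specific later step, singled out among $\{\,j>i:w_{j}<w_{i}\,\}$ by the very tie-breaking rule used to define $w$. One checks that reading $w$ in this way yields a well-defined rooted forest which is heap-preordered for $\sigma$ and whose leaf-stripping word is again $w$, and that it is the unique such forest; the delicate point is handling repeated labels, where the lexicographic tie-breaking built into the definition of $w_{(F,\sigma)}$ is precisely what pins the edges down.
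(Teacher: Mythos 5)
Your overall architecture --- reduce to injectivity in each degree, attach to every heap-preordered forest an extremal word of the support of $\Phi((F,\sigma))$, and conclude by triangularity --- is exactly the paper's. The problem is that your Claim, which you rightly identify as the crux and only sketch, is \emph{false}: the lexicographically smallest word of the support does not determine the forest. Consider the two heap-preordered (indeed heap-ordered) forests $\tdun{$2$} \tddeux{$1$}{$3$}$ and $\tdtroisun{$1$}{$3$}{$2$}$ of vertices degree $3$. Directly from definition \ref{defivarphipreordered} one gets
$$\Phi(\tdun{$2$} \tddeux{$1$}{$3$})=(231)+(312)+(321),\qquad \Phi(\tdtroisun{$1$}{$3$}{$2$})=(231)+(321),$$
and both supports have the same left-to-right lexicographic minimum $(231)$: from the word $(231)$ one cannot tell whether the letter $2$ is an isolated root or a second child of the root labelled $1$. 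So your matrix $(c_{ij})$ is not triangular and the linear-independence argument collapses. (You correctly observe that the lexicographic \emph{maximum} also fails, e.g.\ for $\tdtroisun{$1$}{$2$}{$2$}$ versus $\tdun{$2$} \tddeux{$1$}{$2$}$; the minimum fails for the same structural reason.) The obstruction is that in every support word the early positions are occupied by leaves and the late positions by roots, so any extremization that scans from the left only ``sees'' leaf labels first and is blind to how a small-labelled vertex attaches near the root. Even granting the Claim, your reconstruction paragraph (``one checks\dots'', ``the delicate point is\dots'') defers the entire difficulty.

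The paper's remedy is to compare words from the right: it defines $u>v$ by examining last letters first, and takes $m((F,d))$ to be the \emph{maximum} of the support for that order. Then the last letter of $m((F,d))$ is forced to be the decoration of a root; for a tree, $m((F,d))$ is $m((G,d'))$ followed by $d(R_F)$, where $G$ is obtained by deleting the root; and for a forest, $m$ is the concatenation of the $m((F_i,d_i))$ of the connected components taken in a canonical order. This yields an inductive reconstruction of $(F,d)$ from $m((F,d))$, hence injectivity of $m$ and the triangularity you were after. To repair your proof, replace your statistic by this co-lexicographic maximum (equivalently, reverse all words and run your argument on roots rather than leaves) and actually carry out the reconstruction by induction on the vertices degree; the left-to-right minimum cannot be made to work.
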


\begin{proof}
We introduce a lexicographic order on the words with letters $\in \mathbb{N}^{\ast}$. Let $u = (u_{1} \hdots u_{k})$ and $v = (v_{1} \hdots v_{l}) $ be two words. Then
\begin{itemize}
\item if $u_{k} = v_{k}, u_{k-1} = v_{k-1} , \hdots , u_{i+1} = v_{i+1}$ and $u_{i} > v_{i} $ (resp. $u_{i} < v_{i}$) with $i \in \{1,\hdots , \min (k,l) \}$, then $u > v$ (resp. $u < v$),
\item if $u_{i} = v_{i}$ for all $i \in \{1,\hdots , \min (k,l) \}$ and if $k > l$ (resp. $k<l$) then $u > v$ (resp. $u<v$).
\end{itemize}
For example,
\begin{eqnarray*}
\begin{array}{ccccc}
(541) < (22), & (433) < (533), & (5362) < (72), & (8225) < (1327), & (215) < (1215) .
\end{array}
\end{eqnarray*}
If $u$ and $v$ are two words, we denote by $uv$ the concatenation of $u$ and $v$.\\

In this proof, if $(F,\sigma)$ is a preordered forest, we consider $F$ as a decorated forest where the vertices are decorated by integers. Consider
$$ \mathbb{F} = \left\lbrace (F,d) \ \mid \ F \in \mathbb{F}_{\mathbf{H}_{CK}}, d:V(F) \rightarrow \mathbb{N}^{\ast} \mbox{ such that if } v \rightarrow w \mbox{ then } d(v) > d(w) \right\rbrace $$
the set of forests with their vertices decorated by nonzero integers and with an increasing condition.
\\

Let $(F,d) \in \mathbb{F}$ be a forest of vertices degree $n$ and if $u = (u_{1} \hdots u_{n})$ is a word of length $n$ with $u_{i} \in \mathbb{N}^{\ast}$. In the same way that definition \ref{defivarphipreordered}, we define $\mathbb{S}^{u}_{(F,d)}$ as the set of bijective maps $\varphi : V(F) \rightarrow \{1, \hdots ,n\}$ such that:
\begin{enumerate}
\item if $v \in V(F)$, then $ d(v) = u_{\varphi (v)}$,
\item if $v,v' \in V(F)$, $v' \twoheadrightarrow v$, then $\varphi(v) \geq \varphi (v')$.
\end{enumerate}
For example,
\begin{itemize}
\item if $(F,d) = \tdquatredeux{$2$}{$4$}{$3$}{$7$} \in \mathbb{F}$, then the words $u$ such that $\mathbb{S}^{u}_{(F,d)} \neq \emptyset$ are $(7342), (7432),(4732)$.
\item if $(F,d) = \tdtroisun{$1$}{$4$}{$3$} \tddeux{$3$}{$6$} \in \mathbb{F} $, then the words $u$ such that $\mathbb{S}^{u}_{(F,d)} \neq \emptyset$ are
\begin{eqnarray*}
\begin{array}{c}
(43163), (43613), (46313), (64313) , (43631), (46331), (64331), (63431),  \\
(34163), (34613), (36413), (63413), (34631), (36431), (36341), (63341) .
\end{array}
\end{eqnarray*}
\end{itemize}

Let $(F,d)$ be a forest of $\mathbb{F}$. Then we set
$$ m((F,d)) = \max \left( \{ u \ \mid \ \mathbb{S}^{u}_{(F,d)} \neq \emptyset \} \right) .$$
For example, for $(F,d) = \tdquatredeux{$2$}{$4$}{$3$}{$7$} \in \mathbb{F}$, $m((F,d)) = (7342)$ and for $(F,d) = \tdtroisun{$1$}{$4$}{$3$} \tddeux{$3$}{$6$} \in \mathbb{F}$, $m((F,d)) = (34163)$.
\vspace{0.2cm}

If $(F,d) \in \mathbb{F}$ is the empty tree, $m((F,\sigma)) = 1$. Let $(F,d) \in \mathbb{F}$ be a nonempty tree of vertices degree $n$. We denote by $(G,d')$ the forest of $\mathbb{F}$ obtained by deleting the root of $F$. Then, if $m((F,d)) = (u_{1} \hdots u_{n})$, we have $m((G,d')) = (u_{1} \hdots u_{n-1})$ and $u_{n} = d(R_{F})$ the decoration of the root of $F$. Let $(F,d)$ be a forest of vertices degree $n$, $(F,d)$ is the disjoint union of trees with their vertices decorated by nonzero integers $ (F_{1},d_{1}), \hdots ,(F_{k},d_{k})$ ordered such that $m((F_{1},d_{1})) \leq \hdots \leq m((F_{k},d_{k}))$. Then $m((F,d)) = m((F_{1},d_{1})) \hdots m((F_{k},d_{k})) $:
\begin{itemize}
\item By definition, $\mathbb{S}^{m((F_{i},d_{i}))}_{(F_{i},d_{i})} \neq \emptyset$ and if $\varphi_{i} \in \mathbb{S}^{m((F_{i},d_{i}))}_{(F_{i},d_{i})}$ then $\varphi : V(F) \rightarrow \{1, \hdots ,n\}$, defined for all $1 \leq i \leq k$ and $v \in V(F_{i})$ by $\varphi (v) = \varphi_{i}(v)$, is an element of $ \mathbb{S}^{m((F_{1},d_{1})) \hdots m((F_{k},d_{k}))}_{(F,d)}$ and $\mathbb{S}^{m((F_{1},d_{1})) \hdots m((F_{k},d_{k}))}_{(F,d)} \neq \emptyset$. So $m((F,d)) \geq m((F_{1},d_{1})) \hdots m((F_{k},d_{k})) $.
\item If $\mathbb{S}^{u}_{(F,d)} \neq \emptyset$, $u$ is the shuffle of $u_{1}, \hdots , u_{k}$ such that $\mathbb{S}^{u_{i}}_{(F_{i},d_{i})} \neq \emptyset$ (see the proof of theorem \ref{dehdanswgsym}). In particular, $u_{i} \leq m((F_{i},d_{i}))$, so $u \leq m((F_{1},d_{1})) \hdots m((F_{k},d_{k}))$ and $m((F,d)) \leq m((F_{1},d_{1})) \hdots m((F_{k},d_{k}))$.
\end{itemize}

Let $(F,d) \in \mathbb{F}$ be a forest of vertices degree $n$ and $m((F,d)) = (u_{1} \hdots u_{n})$. Let $i_{1}$ be the smallest index such that $u_{1} , \hdots , u_{i_{1}-1} > u_{i_{1}} $ and, for all $j > i_{1}$, $u_{i_{1}} \leq u_{j}$. By construction, there exists a connected component $(F_{1},d_{1})$ of $(F,d)$ such that $m((F_{1},d_{1})) = (u_{1} \hdots u_{i_{1}})$. Consider the word $(u_{i_{1}+1} \hdots u_{n})$. Let $i_{2} > i_{1}$ be the smallest index such that $u_{i_{1}+1} , \hdots , u_{i_{2}-1} > u_{i_{2}} $ and, for all $j > i_{2}$, $u_{i_{2}} \leq u_{j}$. Then there exists a connected component $(F_{2},d_{2})$ (different from $(F_{1},d_{1})$) such that $m((F_{2},d_{2})) = (u_{i_{1} + 1} \hdots u_{i_{2}})$. In the same way, we construct $i_{3}, \hdots , i_{k}$ and $(F_{3},d_{3}), \hdots, (F_{k},d_{k})$. Then we have $m((F,d)) = m((F_{1},d_{1})) \hdots m((F_{k},d_{k}))$
\vspace{0.2cm}

Let us prove that $m$ is injective on $\mathbb{F}$ by induction on the vertices degree. If $(F,d)$ is the empty tree, it is obvious. Let $(F,d)$ be a nonempty forest of $\mathbb{F}$ of vertices degree $n$.
\begin{itemize}
\item If $(F,d)$ is a tree, $m((F,d)) = (u_{1} \hdots u_{n-1} u_{n})$ with $u_{n} = d(R_{F})$ the decoration of the root of $F$. Let $(G,d')$ be the forest of $\mathbb{F}$ obtained by deleting the root of $F$. Then $m((G,d')) = (u_{1} \hdots u_{n-1})$. By induction hypothesis, $(G,d')$ is the unique forest of $\mathbb{F}$ such that $m((G,d')) = (u_{1} \hdots u_{n-1})$. So $(F,d)$ is also the unique forest of $\mathbb{F}$ such that $m((F,\sigma)) = (u_{1} \hdots u_{n-1} d(R_{F}) )$.
\item If $(F,d)$ is not a tree, then $(F,d)$ is the product of trees $ (F_{1},d_{1}), \hdots ,(F_{k},d_{k})$ of $\mathbb{F}$ ordered such that $m((F_{1},d_{1})) \leq \hdots \leq m((F_{k},d_{k}))$. So $m((F,d)) = m((F_{1},d_{1})) \hdots m((F_{k},d_{k}))$. By the induction hypothesis, for all $1 \leq i \leq k$, $(F_{i},d_{i})$ is the unique tree of $\mathbb{F}$ such that its image by $m$ is $m((F_{i},d_{i}))$. So the product $(F,d)$ of $(F_{i},d_{i})$'s is the unique forest of $\mathbb{F}$ such that its image by $m$ is $m((F,d))$.
\end{itemize}

So $m$ is injective on $\mathbb{F}$. By triangularity, $m$ is injective on $\mathbb{F}_{\hh_{hpo}}$ and we deduce that the restriction of $\Phi$ to $\mathbf{H}_{hpo}$ is an injection of graded Hopf algebras.
\end{proof}

\section{Hopf algebras of contractions}

\subsection{Commutative case}

In \cite{Calaque11}, D. Calaque, K. Ebrahimi-Fard and D. Manchon introduce a new coproduct, called in this paper the contraction coproduct, on the augmentation ideal of $ \hh_{CK} $ (see also \cite{Manchon08}).

\begin{defi} \label{defipartcom}
Let $ F $ be a nonempty rooted forest and $ \boldsymbol{e} $ a subset of $ E(F) $. Then we denote by
\begin{enumerate}
\item $ Part_{\boldsymbol{e}}(F) $ the subforest of $ F $ obtained by keeping all the vertices of $ F $ and the edges of $ \boldsymbol{e} $,
\item $ Cont_{\boldsymbol{e}}(F) $ the forest obtained by contracting each edge of $ \boldsymbol{e} $ in $F$ and identifying the two extremities of each edge of $ \boldsymbol{e} $.
\end{enumerate}
We shall say that $ \boldsymbol{e} $ is a contraction of $ F $, $ Part_{\boldsymbol{e}}(F) $ is the partition of $ F $ by $ \boldsymbol{e} $ and $ Cont_{\boldsymbol{e}}(F) $ is the contracted of $ F $ by $ \boldsymbol{e} $. Each vertex of $ Cont_{\boldsymbol{e}}(F) $ can be identified to a connected component of $ Part_{\boldsymbol{e}}(F) $.
\end{defi}

{\bf Remarks.} {
\begin{itemize}
\item If $ \boldsymbol{e} = \emptyset $, then $ Part_{\boldsymbol{e}}(F) = \underbrace{\tun \hdots \tun}_{\left| F \right|_{v} \times} $ and $ Cont_{\boldsymbol{e}}(F) = F $: this is the \textit{empty contraction} of $ F $.
\item If $ \boldsymbol{e} = E(F) $, then $ Part_{\boldsymbol{e}}(F) = F $ and $ Cont_{\boldsymbol{e}}(F) = \tun $: this is the \textit{total contraction} of $ F $.
\end{itemize}
}
\vspace{0.5cm}

{\bf Notations.} {We shall write $ \boldsymbol{e} \models E(F) $ if $ \boldsymbol{e} $ is a contraction of $ F $ and $ \boldsymbol{e} \mmodels E(F) $ if $ \boldsymbol{e} $ is a nonempty, nontotal contraction of $ F $.}
\\

{\bf Example.} {Let $ T = \tquatredeux $ be a rooted tree. Then
$$\begin{array}{|c|c|c|c|c|c|c|c|c|c|}
\hline \mbox{contraction } \boldsymbol{e} &\tquatredeux&\tquatredeuxa&\tquatredeuxb&\tquatredeuxc
&\tquatredeuxd&\tquatredeuxe&\tquatredeuxf&\tquatredeuxg\\
\hline Part_{\boldsymbol{e}}(T)&\tquatredeux&\tdeux \tdeux & \tun \ttroisun& \tun \ttroisdeux& \tun \tun \tdeux & \tun \tun \tdeux &\tun \tun \tdeux & \tun \tun \tun \tun \\
\hline Cont_{\boldsymbol{e}}(T)& \tun & \tdeux & \tdeux & \tdeux & \ttroisdeux & \ttroisun& \ttroisun & \tquatredeux \\
\hline \end{array}$$
where, in the first line, the edges not belonging to $ \boldsymbol{e} $ are striked out.
}
\\

{\bf Remarks.} {Let $ F $ be a nonempty rooted forest and $ \boldsymbol{e} \models E(F) $.
\begin{enumerate}
\item We have the following relation on the vertices degrees:
\begin{eqnarray*}
\left| F \right|_{v} = \left| Cont_{\boldsymbol{e}}(F) \right|_{v} + \left| Part_{\boldsymbol{e}}(F) \right|_{v} - l(Part_{\boldsymbol{e}}(F) ) .
\end{eqnarray*}
\item Note $\overline{\boldsymbol{e}}$ the complementary to $\boldsymbol{e}$ in $E(F)$. Then $E(Part_{\boldsymbol{e}}(F)) = \boldsymbol{e}$ and $E(Cont_{\boldsymbol{e}}(F)) = \overline{\boldsymbol{e}}$ and
\begin{eqnarray} \label{relatdeg}
\left| F \right|_{e} = \left| Cont_{\boldsymbol{e}}(F) \right|_{e} + \left| Part_{\boldsymbol{e}}(F) \right|_{e} .
\end{eqnarray}
\end{enumerate}
}

Let $ \cc_{CK} $ be the quotient algebra $ \hh_{CK}/I_{CK} $ where $ I_{CK} $ is the ideal spanned by $ \tun - 1 $. In others terms, one identifies the unit $ 1 $ (for the concatenation) with the tree $ \tun $. We note in the same way a rooted forest and his class in $\cc_{CK}$. Then we define on $ \cc_{CK} $ a contraction coproduct on each forest $ F \in \cc_{CK} $:
\begin{eqnarray*}
\Delta_{\cc_{CK}} (F) & = & \sum_{\boldsymbol{e} \models E(F)} Part_{\boldsymbol{e}}(F) \otimes Cont_{\boldsymbol{e}}(F) ,\\
& = & F \otimes \tun + \tun \otimes F + \sum_{\boldsymbol{e} \mmodels E(F)} Part_{\boldsymbol{e}}(F) \otimes Cont_{\boldsymbol{e}}(F) .
\end{eqnarray*}
In particular, $ \Delta_{\cc_{CK}} (\tun) = \tun \otimes \tun $.\\

{\bf Example.} {
\begin{eqnarray*}
\Delta_{\cc_{CK}} ( \tquatredeux ) = \tun \otimes \tquatredeux + \tquatredeux \otimes \tun + 2 \tdeux \otimes \ttroisun + \tdeux \otimes \ttroisdeux + \ttroisdeux \otimes \tdeux + \ttroisun \otimes \tdeux + \tdeux \tdeux \otimes \tdeux .
\end{eqnarray*}
}
\\

We define an algebra morphism $ \varepsilon $:
\begin{eqnarray*}
\varepsilon : \left\lbrace \begin{array}{rcl}
\cc_{CK} & \rightarrow & \mathbb{K} \\
F \mbox{ forest } & \mapsto & \delta_{F,\tun}.
\end{array} \right. 
\end{eqnarray*}

Then $ (\cc_{CK},\Delta_{\cc_{CK}},\varepsilon) $ is a commutative Hopf algebra graded by the number of edges. $ \cc_{CK} $ is non cocommutative (see for example the coproduct of $ \tquatredeux $).\\

{\bf Remark.} {We define inductively:
\begin{eqnarray*}
\Delta_{\cc_{CK}}^{(0)} = Id, \hspace{1cm} \Delta_{\cc_{CK}}^{(1)} = \Delta_{\cc_{CK}}, \hspace{1cm} \Delta_{\cc_{CK}}^{(k)} = (\Delta_{\cc_{CK}} \otimes Id^{\otimes (k-1)}) \circ \Delta_{\cc_{CK}}^{(k-1)}.
\end{eqnarray*}
For all $ k \in \mathbb{N} $, $ \Delta_{\cc_{CK}}^{(k)} : \cc_{CK} \rightarrow \cc_{CK}^{\otimes (k+1)} $. If $ F $ is a rooted forest with $ n $ edges, there are $ (k+1)^{n} $ terms in the expression of $ \Delta_{\cc_{CK}}^{(k)} (F) $:
\begin{itemize}
\item If $ k=0 $, this is obvious.
\item If $ k > 0 $, we have $ \left( \substack{n \\ l} \right) $ tensors $ F^{(1)} \otimes F^{(2)} $ in $ \Delta_{\cc_{CK}} (F) $ such that the left term $ F^{(1)} $ have $ l $ edges. By the induction hypothesis, there are $ k^{l} $ terms in $ \Delta_{\cc_{CK}}^{(k-1)} (F^{(1)}) $. So there are $ \sum_{0 \leq l \leq n} \left( \substack{n \\ l} \right) k^{l} = (k+1)^{n} $ terms in the expression of $ \Delta_{\cc_{CK}}^{(k)} (F) $.
\end{itemize}
}
\vspace{0.5cm}

We give the first numbers of trees $ t_{n}^{\cc_{CK}} $ and forests $ f_{n}^{\cc_{CK}} $:

$$\begin{array}{c|c|c|c|c|c|c|c|c|c|c|c}
n&0&1&2&3&4&5&6&7&8&9&10\\
\hline t_{n}^{\cc_{CK}} &1&1&2&4&9&20&48&115&286&719&1842 \\
\hline f_{n}^{\cc_{CK}} &1&1&3&7&19&47&127&330&889&2378&6450
\end{array} $$

The first sequence is the sequence A000081 in \cite{Sloane}.\\

We recall a combinatorial description of the antipode $ S_{\cc_{CK}} : \cc_{CK} \rightarrow \cc_{CK} $ (see \cite{Calaque11}):

\begin{prop} \label{antipodecom}
The antipode $ S_{\cc_{CK}} : \cc_{CK} \rightarrow \cc_{CK} $ of the Hopf algebra $ (\cc_{CK},\Delta_{\cc_{CK}},\varepsilon) $ is given (recursively with respect to number of edges) by the following formulas: for all forest $ F \in \cc_{CK} $,
\begin{eqnarray*}
S_{\cc_{CK}}(F) & = & - F - \sum_{\boldsymbol{e} \mmodels E(F)} S_{\cc_{CK}}(Part_{\boldsymbol{e}}(F)) Cont_{\boldsymbol{e}}(F) \\
& = & - F - \sum_{\boldsymbol{e} \mmodels E(F)} Part_{\boldsymbol{e}}(F) S_{\cc_{CK}}(Cont_{\boldsymbol{e}}(F)) .
\end{eqnarray*}
\end{prop}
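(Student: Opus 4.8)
The plan is to treat $\cc_{CK}$ as a connected graded Hopf algebra for the edge grading and to read off the two recursions directly from the two antipode identities $m\circ(S_{\cc_{CK}}\otimes Id)\circ\Delta_{\cc_{CK}} = u\circ\varepsilon$ and $m\circ(Id\otimes S_{\cc_{CK}})\circ\Delta_{\cc_{CK}} = u\circ\varepsilon$, where $m$ is the concatenation product and $u:\mathbb{K}\to\cc_{CK}$ sends $1$ to $\tun$. First I would check that this grading is connected and makes everything homogeneous: a forest has no edge iff it is a product of copies of $\tun$, hence equals $\tun$ in $\cc_{CK}$, so the degree-$0$ component is $\mathbb{K}\tun$ and is one-dimensional; each homogeneous component is finite-dimensional (the numbers $f_n^{\cc_{CK}}$); the product adds edge-degrees; and $\Delta_{\cc_{CK}}$ is homogeneous by relation (\ref{relatdeg}). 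Moreover, for a forest $F$ with $|F|_e = n\ge 1$ and $\boldsymbol{e}\mmodels E(F)$, relation (\ref{relatdeg}) gives $1\le |Part_{\boldsymbol{e}}(F)|_e$ and $1\le |Cont_{\boldsymbol{e}}(F)|_e$, both $<n$; thus the reduced coproduct $\tdelta(F)=\Delta_{\cc_{CK}}(F)-F\otimes\tun-\tun\otimes F=\sum_{\boldsymbol{e}\mmodels E(F)}Part_{\boldsymbol{e}}(F)\otimes Cont_{\boldsymbol{e}}(F)$ lands in $Ker(\varepsilon)\otimes Ker(\varepsilon)$ with both factors of strictly smaller edge-degree. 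This is exactly what makes the recursion below well founded.

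Next I would derive the first formula. The base case is $F=\tun$, which has no edges: $\Delta_{\cc_{CK}}(\tun)=\tun\otimes\tun$ forces $S_{\cc_{CK}}(\tun)=\tun$. Now let $F$ be a forest with $|F|_e\ge 1$, so that $\varepsilon(F)=0$. Applying $m\circ(S_{\cc_{CK}}\otimes Id)$ to
$$\Delta_{\cc_{CK}}(F)=F\otimes\tun+\tun\otimes F+\sum_{\boldsymbol{e}\mmodels E(F)}Part_{\boldsymbol{e}}(F)\otimes Cont_{\boldsymbol{e}}(F),$$
and using $S_{\cc_{CK}}(\tun)=\tun$ together with the fact that $\tun$ is the unit of $\cc_{CK}$, one obtains
$$0=\varepsilon(F)\,\tun=S_{\cc_{CK}}(F)+F+\sum_{\boldsymbol{e}\mmodels E(F)}S_{\cc_{CK}}(Part_{\boldsymbol{e}}(F))\,Cont_{\boldsymbol{e}}(F),$$
which is the first displayed recursion; since every $Part_{\boldsymbol{e}}(F)$ occurring has strictly smaller edge-degree, this determines $S_{\cc_{CK}}$ unambiguously. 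The second formula is obtained in exactly the same manner from $m\circ(Id\otimes S_{\cc_{CK}})\circ\Delta_{\cc_{CK}}(F)=\varepsilon(F)\,\tun$, now using $S_{\cc_{CK}}(\tun)=\tun$ on the left tensor factor. Because the antipode of a Hopf algebra is unique, the two recursions automatically compute one and the same map $S_{\cc_{CK}}$, so no separate compatibility check between them is required. (If one prefers a self-contained argument not invoking existence of the antipode, one can instead \emph{define} $S_{\cc_{CK}}$ by the right-hand side of either formula, recursively on $|F|_e$, and verify by induction on the number of edges that it satisfies both cancellation identities.)

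I do not expect a real obstacle here: the statement is essentially the general antipode recursion of a connected graded Hopf algebra, once the reduced coproduct of $\cc_{CK}$ is identified with $\sum_{\boldsymbol{e}\mmodels E(F)}Part_{\boldsymbol{e}}(F)\otimes Cont_{\boldsymbol{e}}(F)$. The only points demanding mild care are (i) verifying that the edge grading is connected and homogeneous, so that the antipode exists and the recursion terminates, and (ii) bookkeeping the fact that in $\cc_{CK}$ the algebra unit is $\tun$ — this is why the grouplike summands $\tun\otimes F$ and $F\otimes\tun$ contribute simply $F$ after applying $S_{\cc_{CK}}\otimes Id$ and $Id\otimes S_{\cc_{CK}}$ respectively, and it is also why the displayed formulas should be read for forests $F\ne\tun$ (i.e. $|F|_e\ge 1$), the value $S_{\cc_{CK}}(\tun)=\tun$ being the base case.
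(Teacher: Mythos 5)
Your proof is correct; note that the paper itself gives no proof of this proposition, recalling it from the reference of Calaque, Ebrahimi-Fard and Manchon, and your argument is exactly the standard antipode recursion for a connected graded Hopf algebra applied to the edge grading of $\cc_{CK}$. You correctly handle the two points that need care: that a nonempty nontotal contraction $\boldsymbol{e}$ gives both $Part_{\boldsymbol{e}}(F)$ and $Cont_{\boldsymbol{e}}(F)$ of strictly smaller positive edge degree (so the recursion is well founded), and that the displayed formulas are to be read for $\left| F \right|_{e} \geq 1$, with $S_{\cc_{CK}}(\tun)=\tun$ as the base case — consistent with the paper's own list of examples.
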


{\bf Examples.} {
\begin{eqnarray*}
S_{\cc_{CK}}(\tun) & = & \tun ,\\
S_{\cc_{CK}}(\tdeux) & = & - \tdeux - \tun ,\\
S_{\cc_{CK}}(\ttroisun) & = & - \ttroisun + 2 \tdeux \tdeux + 2 \tdeux ,\\
S_{\cc_{CK}}(\ttroisdeux) & = & - \ttroisdeux + 2 \tdeux \tdeux + 2 \tdeux ,\\
S_{\cc_{CK}}(\tquatredeux) & = & - \tquatredeux + 3 \tdeux \ttroisun + 2 \ttroisun + 2 \tdeux \ttroisdeux + \ttroisdeux - 5 \tdeux \tdeux \tdeux - 6 \tdeux \tdeux - \tdeux .
\end{eqnarray*}
}

We now give a decorated version of $\cc_{CK}$. Let $\mathcal{D}$ be a nonempty set. A rooted forest with their edges decorated by $\mathcal{D}$ is a couple $(F,d)$ where $F$ is a forest of $\cc_{CK}$ and $d : E(F) \rightarrow \mathcal{D} $ is a map. We denote by $\cc^{\mathcal{D}}_{CK}$ the $\mathbb{K}$-vector space spanned by rooted forests with edges decorated by $\mathcal{D}$.\\

{\bf Examples.} {\begin{enumerate}
\item Rooted trees decorated by $ \mathcal{D} $ with edges degree smaller than $ 3 $:
$$ \addeux{$a$}, a \in \mathcal{D}, \hspace{0.5cm} \adtroisun{$b$}{$a$}, \adtroisdeux{$a$}{$b$} , (a,b) \in \mathcal{D}^{2} , \hspace{0.5cm} \adquatreun{$a$}{$b$}{$c$} , \adquatredeux{$b$}{$a$}{$c$} , \adquatretrois{$b$}{$a$}{$c$} , \adquatrequatre{$a$}{$b$}{$c$}, \adquatrecinq{$a$}{$b$}{$c$}, (a,b,c) \in \mathcal{D}^{3} .$$
\item Rooted forests decorated by $ \mathcal{D} $ with edges degree smaller than $ 3 $:
$$ \addeux{$a$}, a \in \mathcal{D} , \hspace{0.5cm} \addeux{$a$} \addeux{$b$} , \adtroisun{$b$}{$a$}, \adtroisdeux{$a$}{$b$} , (a,b) \in \mathcal{D}^{2}, $$
$$ \addeux{$a$} \addeux{$b$} \addeux{$c$}, \addeux{$a$} \adtroisun{$c$}{$b$} , \addeux{$a$} \adtroisdeux{$b$}{$c$} , \adquatreun{$a$}{$b$}{$c$} , \adquatredeux{$b$}{$a$}{$c$} , \adquatretrois{$b$}{$a$}{$c$} , \adquatrequatre{$a$}{$b$}{$c$}, \adquatrecinq{$a$}{$b$}{$c$}, (a,b,c) \in \mathcal{D}^{3} .$$
\end{enumerate}}

If $F \in \cc_{CK}^{\mathcal{D}}$ $ \boldsymbol{e} \models E(F)$, then $Part_{\boldsymbol{e}}(F)$ and $Cont_{\boldsymbol{e}}(F)$ are naturally rooted forests with their edges decorated by $\mathcal{D}$: we keep the decoration of each edges. The vector space $ \cc_{CK}^{\mathcal{D}} $ is a Hopf algebra. Its product is given by the concatenation and its coproduct is the contraction coproduct. For example: if $ (a,b,c) \in \mathcal{D}^{3} $,
\begin{eqnarray*}
\Delta_{\cc_{CK}^{\mathcal{D}}} (\adquatredeux{$b$}{$a$}{$c$})&=&\adquatredeux{$b$}{$a$}{$c$} \otimes \tun +\tun \otimes \adquatredeux{$b$}{$a$}{$c$}+ \addeux{$c$} \otimes \adtroisun{$b$}{$a$} + \addeux{$a$} \otimes \adtroisun{$b$}{$c$} + \addeux{$b$} \otimes \adtroisdeux{$a$}{$c$} + \addeux{$c$} \addeux{$b$} \otimes \addeux{$a$} \\
& & + \adtroisun{$b$}{$a$} \otimes \addeux{$c$} + \adtroisdeux{$a$}{$c$} \otimes \addeux{$b$} .
\end{eqnarray*}

{\bf Notation.} {The set of nonempty trees of $\cc_{CK}$ (that is to say with at least one edge) will be denoted by $\mathbb{T}_{\cc_{CK}}$. The set of nonempty trees with their edges decorated by $\mathcal{D}$ of $\cc_{CK}^{\mathcal{D}}$ will be denoted by $\mathbb{T}^{\mathcal{D}}_{\cc_{CK}}$.}

\subsection{Insertion operations}

Let $\mathbf{T}_{CK}^{\mathcal{D}}$ be the $\mathbb{K}$-vector space having for basis $\mathbb{T}^{\mathcal{D}}_{\cc_{CK}}$. In this section, we prove that $\mathbf{T}_{CK}^{\mathcal{D}}$ is equiped with two operations $\curlyvee$ and $\rhd$ such that $(\mathbf{T}_{CK}^{\mathcal{D}}, \curlyvee , \rhd)$ is a commutative prelie algebra. 
 
\begin{defi}
\begin{enumerate}
\item A commutative prelie algebra is a $\mathbb{K}$-vector space $A$ together with two $\mathbb{K}$-linear maps $\curlyvee , \rhd : A \otimes A \rightarrow A$ such that $x \curlyvee y = y \curlyvee x$ for all $x,y \in A$ (that is to say $\curlyvee$ is commutative) and satisfying the following relations : for all $x,y,z \in A$,
\begin{eqnarray} \label{relationComPrelie}
\left\lbrace \begin{array}{l}
(x \curlyvee y) \curlyvee z = x \curlyvee (y \curlyvee z) ,\\
x \rhd (y \rhd z) - (x \rhd y) \rhd z = y \rhd (x \rhd z) - (y \rhd x) \rhd z ,\\
x \rhd (y \curlyvee z) = (x \rhd y) \curlyvee z + (x \rhd z) \curlyvee y .
\end{array} \right. 
\end{eqnarray}
In other words, $(A, \curlyvee , \rhd)$ is a commutative prelie algebra if $(A,\curlyvee)$ is a commutative algebra and $(A,\rhd)$ is a left prelie algebra with a relationship between $\curlyvee$ and $\rhd$.
\item The commutative prelie operad, denoted $\mathcal{C}om \mathcal{P}reLie$, is the operad such that $\mathcal{C}om \mathcal{P}reLie$-algebras are commutative prelie algebras.
\end{enumerate}
\end{defi}

{\bf Remark.} From this definition, it is clear that the operad $\mathcal{C}om \mathcal{P}reLie$ is binary and quadratic (see \cite{LodayV} for a definition).
\\

{\bf Notations.} \begin{enumerate}
\item Let $T \in \mathbb{T}_{\cc_{CK}}$ be a tree with at least one edge. We denote by $V^{\ast}(T) = V(T) \setminus \{ R_{T} \}$ the set of vertices of $T$ different from the root of $T$.
\item Let $T_{1}, T_{2} \in \mathbb{T}_{\cc_{CK}}$ and $v \in V(T_{2})$. Then $T_{1} \circ_{v} T_{2}$ is the tree obtained by identifying the root $R_{T_{1}}$ of $T_{1}$ and the vertex $v$ of $T_{2}$.
\end{enumerate}
\vspace{0.5cm}

We define two $\mathbb{K}$-linear maps $\curlyvee : \mathbf{T}_{CK}^{\mathcal{D}} \otimes \mathbf{T}_{CK}^{\mathcal{D}} \rightarrow \mathbf{T}_{CK}^{\mathcal{D}}$ and $\rhd : \mathbf{T}_{CK}^{\mathcal{D}} \otimes \mathbf{T}_{CK}^{\mathcal{D}} \rightarrow \mathbf{T}_{CK}^{\mathcal{D}}$ as follow: if $T_{1}, T_{2} \in \mathbb{T}^{\mathcal{D}}_{\cc_{CK}}$,
\begin{eqnarray*}
T_{1} \curlyvee T_{2} & = & T_{1} \circ_{R_{T_{2}}} T_{2} ,\\
T_{1} \rhd T_{2} & = & \sum_{s \in V^{\ast}(T_{2})} T_{1} \circ_{s} T_{2} .
\end{eqnarray*}
\vspace{0.5cm}

{\bf Examples.} \begin{enumerate}
\item For the map $\curlyvee : \mathbf{T}_{CK}^{\mathcal{D}} \otimes \mathbf{T}_{CK}^{\mathcal{D}} \rightarrow \mathbf{T}_{CK}^{\mathcal{D}}$ :
\begin{eqnarray*}
\begin{array}{rcl|rcl|rcl}
\addeux{$a$} \curlyvee \addeux{$b$} & = & \adtroisun{$b$}{$a$} & \addeux{$a$} \curlyvee \adtroisdeux{$b$}{$c$} & = & \adquatretrois{$b$}{$a$}{$c$} & \adtroisdeux{$a$}{$b$} \curlyvee \addeux{$c$} & = & \adquatredeux{$c$}{$a$}{$b$} \\
\addeux{$a$} \curlyvee \adtroisun{$c$}{$b$} & = & \adquatreun{$a$}{$b$}{$c$} & \adtroisun{$b$}{$a$} \curlyvee \addeux{$c$} & = & \adquatreun{$a$}{$b$}{$c$} & \adquatrequatre{$a$}{$c$}{$d$} \curlyvee \addeux{$b$} & = & \adcinqsix{$a$}{$b$}{$c$}{$d$}
\end{array}
\end{eqnarray*}
\item For the map $\rhd : \mathbf{T}_{CK}^{\mathcal{D}} \otimes \mathbf{T}_{CK}^{\mathcal{D}} \rightarrow \mathbf{T}_{CK}^{\mathcal{D}}$ :
\begin{eqnarray*}
\begin{array}{rcl|rcl|rcl}
\addeux{$a$} \rhd \addeux{$b$} & = & \adtroisdeux{$b$}{$a$} & \addeux{$a$} \rhd \adtroisun{$c$}{$b$} & = & \adquatredeux{$c$}{$b$}{$a$} + \adquatretrois{$c$}{$b$}{$a$} & \adtroisun{$b$}{$a$} \rhd \addeux{$c$} & = & \adquatrequatre{$c$}{$a$}{$b$} \\
\adtroisdeux{$a$}{$b$} \rhd \addeux{$c$} & = & \adquatrecinq{$c$}{$a$}{$b$} & \addeux{$a$} \rhd \adtroisdeux{$b$}{$c$} & = & \adquatrequatre{$b$}{$a$}{$c$} + \adquatrecinq{$b$}{$c$}{$a$} & \adtroisun{$b$}{$a$} \rhd \adtroisun{$d$}{$c$} & = & \adcinqsix{$c$}{$d$}{$a$}{$b$} + \adcinqsept{$c$}{$d$}{$b$}{$a$}
\end{array}
\end{eqnarray*}
\end{enumerate}

\begin{prop}
$(\mathbf{T}_{CK}^{\mathcal{D}}, \curlyvee , \rhd )$ is a $\mathcal{C}om \mathcal{P}reLie$-algebra.
\end{prop}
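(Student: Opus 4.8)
The plan is to check directly the three identities of (\ref{relationComPrelie}), together with commutativity of $\curlyvee$. It is convenient to think of a tree $T\in\mathbb{T}^{\mathcal{D}}_{\cc_{CK}}$ as its root $R_T$ with the multiset of decorated branches hanging from it; then $T_1\curlyvee T_2=T_1\circ_{R_{T_2}}T_2$ fuses $R_{T_1}$ and $R_{T_2}$ into a single vertex and unites the two families of branches, so $\curlyvee$ is visibly commutative, and both $(T_1\curlyvee T_2)\curlyvee T_3$ and $T_1\curlyvee(T_2\curlyvee T_3)$ equal the tree obtained by fusing the three roots and taking the union of all branches. Hence $(\mathbf{T}_{CK}^{\mathcal{D}},\curlyvee)$ is a commutative associative algebra, which is the first relation of (\ref{relationComPrelie}).

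For the left pre-Lie relation I would expand everything as sums over insertion vertices. Using that $V^{\ast}(y\circ_s z)=V^{\ast}(z)\sqcup V^{\ast}(y)$ for $s\in V^{\ast}(z)$ (the fused vertex being counted as the element $s$ of $V^{\ast}(z)$), one gets
$$x\rhd(y\rhd z)=\sum_{s,t\in V^{\ast}(z)}x\circ_t(y\circ_s z)+\sum_{s\in V^{\ast}(z)}\ \sum_{t\in V^{\ast}(y)}x\circ_t(y\circ_s z).$$
The crucial elementary fact is that grafting $x$ at a non-root vertex $t$ of $y$ commutes with grafting $y$ into $z$, i.e. $x\circ_t(y\circ_s z)=(x\circ_t y)\circ_s z$ when $t\in V^{\ast}(y)$; so the second sum equals $\sum_{t\in V^{\ast}(y)}\sum_{s\in V^{\ast}(z)}(x\circ_t y)\circ_s z=(x\rhd y)\rhd z$. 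Therefore $x\rhd(y\rhd z)-(x\rhd y)\rhd z=\sum_{s,t\in V^{\ast}(z)}x\circ_t(y\circ_s z)$, and this is symmetric in $x$ and $y$: for $s\neq t$ the two insertions are at distinct vertices of $z$, so $x\circ_t(y\circ_s z)=y\circ_s(x\circ_t z)$, while for $s=t$ both $R_x$ and $R_y$ are identified with the single vertex $s$ of $z$, which is manifestly symmetric. This is exactly the second relation of (\ref{relationComPrelie}).

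The compatibility relation is handled the same way. Since $V^{\ast}(y\curlyvee z)=V^{\ast}(y)\sqcup V^{\ast}(z)$ (the common root no longer lies in $V^{\ast}$), we split
$$x\rhd(y\curlyvee z)=\sum_{t\in V^{\ast}(y)}x\circ_t(y\curlyvee z)+\sum_{t\in V^{\ast}(z)}x\circ_t(y\curlyvee z).$$
Grafting $x$ at a non-root vertex $t$ of $y$ commutes with fusing the roots of $y$ and $z$, so the first sum is $\sum_{t\in V^{\ast}(y)}(x\circ_t y)\curlyvee z=(x\rhd y)\curlyvee z$; likewise, using also commutativity of $\curlyvee$, the second sum is $(x\rhd z)\curlyvee y$. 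This gives the third relation of (\ref{relationComPrelie}).

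The only genuinely delicate part — and the one I would write out with care — is the bookkeeping of vertex identifications: the three "commutation" statements used above ($\circ_t$ past $\circ_s$, and past $\curlyvee$, for $t$ a non-root vertex of the inner tree), and in the pre-Lie computation the diagonal term $s=t$ where $R_x$, $R_y$ and $s$ all collapse to one vertex. Once these are made precise the three identities follow from the displayed sums, and the small examples preceding the statement serve as sanity checks. Since $\curlyvee$ and $\rhd$ plainly preserve the grading by the number of edges, this shows that $(\mathbf{T}_{CK}^{\mathcal{D}},\curlyvee,\rhd)$ is a $\mathcal{C}om \mathcal{P}reLie$-algebra.
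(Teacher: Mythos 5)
Your proposal is correct and follows essentially the same route as the paper's proof: the same symbolic verification of commutativity and associativity of $\curlyvee$, the same splitting of $V^{\ast}(T_2\circ_v T_3)$ into $V^{\ast}(T_2)\sqcup V^{\ast}(T_3)$ with the commutation $T_1\circ_w(T_2\circ_v T_3)=(T_1\circ_w T_2)\circ_v T_3$ for $w\in V^{\ast}(T_2)$, and the symmetry of the residual double sum over $V^{\ast}(T_3)$ for the pre-Lie identity, and the analogous splitting of $V^{\ast}(T_2\curlyvee T_3)$ for the compatibility relation. Your explicit treatment of the diagonal case $s=t$ is a slight refinement the paper leaves implicit, but the argument is the same.
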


\begin{proof}
Let $T_{1}, T_{2}, T_{3} \in \mathbb{T}^{\mathcal{D}}_{\cc_{CK}}$. Then
\begin{eqnarray*}
T_{1} \curlyvee T_{2} = T_{1} \circ_{R_{T_{2}}} T_{2} = T_{2} \circ_{R_{T_{1}}} T_{1} = T_{2} \curlyvee T_{1} .
\end{eqnarray*}
Moreover,
\begin{eqnarray*}
(T_{1} \curlyvee T_{2}) \curlyvee T_{3} = (T_{1} \circ_{R_{T_{2}}} T_{2}) \circ_{R_{T_{3}}} T_{3} = T_{1} \circ_{R_{(T_{2} \circ_{R_{T_{3}}} T_{3})}} (T_{2} \circ_{R_{T_{3}}} T_{3}) = T_{1} \curlyvee (T_{2} \curlyvee T_{3}).
\end{eqnarray*}
Therefore $(\mathbf{T}_{CK}^{\mathcal{D}}, \curlyvee)$ is a commutative algebra.
\begin{eqnarray*}
T_{1} \rhd (T_{2} \rhd T_{3}) & = & \sum_{\substack{v \in V^{\ast}(T_{3})\\ w \in V^{\ast}(T_{2}) \cup V^{\ast}(T_{3})}} T_{1} \circ_{w} (T_{2} \circ_{v} T_{3}) \\
& = & \sum_{v \in V^{\ast}(T_{3}), w \in V^{\ast}(T_{2})} T_{1} \circ_{w} (T_{2} \circ_{v} T_{3}) + \sum_{v,w \in V^{\ast}(T_{3})} T_{1} \circ_{w} (T_{2} \circ_{v} T_{3}) \\
& = & \sum_{v \in V^{\ast}(T_{3}), w \in V^{\ast}(T_{2})} (T_{1} \circ_{w} T_{2}) \circ_{v} T_{3} + \sum_{v,w \in V^{\ast}(T_{3})} T_{1} \circ_{w} (T_{2} \circ_{v} T_{3}) \\
& = & (T_{1} \rhd T_{2}) \rhd T_{3} + \sum_{v,w \in V^{\ast}(T_{3})} T_{1} \circ_{w} (T_{2} \circ_{v} T_{3}) .
\end{eqnarray*}
So
\begin{eqnarray*}
T_{1} \rhd (T_{2} \rhd T_{3}) - (T_{1} \rhd T_{2}) \rhd T_{3} & = & \sum_{v,w \in V^{\ast}(T_{3})} T_{1} \circ_{w} (T_{2} \circ_{v} T_{3}) \\
& = & \sum_{v,w \in V^{\ast}(T_{3})} T_{2} \circ_{v} (T_{1} \circ_{w} T_{3}) \\
& = & T_{2} \rhd (T_{1} \rhd T_{3}) - (T_{2} \rhd T_{1}) \rhd T_{3}.
\end{eqnarray*}
Therefore, $(\mathbf{T}_{CK}^{\mathcal{D}},\rhd)$ is a left prelie algebra.\\

It remains to prove the last relation of (\ref{relationComPrelie}):
\begin{eqnarray*}
T_{1} \rhd (T_{2} \curlyvee T_{3}) & = & \sum_{v \in V^{\ast}(T_{2} \circ_{R_{T_{3}}} T_{3})} T_{1} \circ_{v} (T_{2} \circ_{R_{T_{3}}} T_{3}) \\
& = & \sum_{v \in V^{\ast}(T_{2})} T_{1} \circ_{v} (T_{2} \circ_{R_{T_{3}}} T_{3}) + \sum_{v \in V^{\ast}(T_{3})} T_{1} \circ_{v} (T_{2} \circ_{R_{T_{3}}} T_{3}) \\
& = & \sum_{v \in V^{\ast}(T_{2})} T_{1} \circ_{v} (T_{2} \circ_{R_{T_{3}}} T_{3}) + \sum_{v \in V^{\ast}(T_{3})} T_{1} \circ_{v} (T_{3} \circ_{R_{T_{2}}} T_{2}) \\
& = & \left( \sum_{v \in V^{\ast}(T_{2})} T_{1} \circ_{v} T_{2} \right) \circ_{R_{T_{3}}} T_{3} + \left( \sum_{v \in V^{\ast}(T_{3})} T_{1} \circ_{v} T_{3} \right) \circ_{R_{T_{2}}} T_{2} \\
& = & (T_{1} \rhd T_{2}) \curlyvee T_{3} + (T_{1} \rhd T_{3}) \curlyvee T_{2} .
\end{eqnarray*}
\end{proof}

\begin{theo}
$(\mathbf{T}_{CK}^{\mathcal{D}}, \curlyvee , \rhd )$ is generated as $\mathcal{C}om \mathcal{P}reLie$-algebra by $\addeux{$d$}$, $d \in \mathcal{D}$.
\end{theo}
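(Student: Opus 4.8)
The plan is to prove the statement by induction on the number $n$ of edges of a tree, showing that every $T\in\mathbb{T}^{\mathcal{D}}_{\cc_{CK}}$ lies in the sub-$\mathcal{C}om \mathcal{P}reLie$-algebra $A$ of $\mathbf{T}_{CK}^{\mathcal{D}}$ generated by the trees $\addeux{$d$}$, $d\in\mathcal{D}$; since $\mathbb{T}^{\mathcal{D}}_{\cc_{CK}}$ is a basis of $\mathbf{T}_{CK}^{\mathcal{D}}$, this is enough. The base case $n=1$ is immediate, as a one-edge decorated tree is precisely some $\addeux{$d$}$.

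For the inductive step I would take $T$ with $n\geq 2$ edges and root $R_{T}$, let $v_{1},\ldots,v_{k}$ (with $k\geq 1$) be the children of $R_{T}$, the edge from $R_{T}$ to $v_{i}$ being decorated by $d_{i}$, and let $T_{i}$ be the rooted tree with root $v_{i}$ obtained from $T$ by deleting $R_{T}$ together with its incident edges; each $T_{i}$ has at most $n-k\leq n-1$ edges. The combinatorial heart of the argument is the identity
$$T=S_{1}\curlyvee S_{2}\curlyvee\cdots\curlyvee S_{k},\qquad S_{i}=\begin{cases}\addeux{$d_{i}$}&\text{if $T_{i}$ has no edge,}\\ T_{i}\rhd\addeux{$d_{i}$}&\text{otherwise,}\end{cases}$$
the iterated $\curlyvee$-product being unambiguous since $\curlyvee$ is commutative and associative by the preceding proposition. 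I would check it from two observations: first, $V^{\ast}(\addeux{$d_{i}$})$ is the single leaf of $\addeux{$d_{i}$}$, so $T_{i}\rhd\addeux{$d_{i}$}=T_{i}\circ_{\ell_{i}}\addeux{$d_{i}$}$ reduces to one term, namely the tree with root $R_{\addeux{$d_{i}$}}$, a single $d_{i}$-edge above it, and $T_{i}$ planted at the far end of that edge (identified with $R_{T_{i}}$); hence in both cases $S_{i}$ is ``a root, a $d_{i}$-edge, then $T_{i}$''. Second, $S_{1}\curlyvee\cdots\curlyvee S_{k}$ identifies all the roots $R_{S_{i}}$, which reproduces the root $R_{T}$ carrying the edges $d_{1},\ldots,d_{k}$ with the $T_{i}$ grown above them, i.e. $T$.

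Once this identity is available the conclusion is quick: if $T_{i}$ has no edge then $S_{i}=\addeux{$d_{i}$}\in A$; otherwise $T_{i}\in A$ by the induction hypothesis, $\addeux{$d_{i}$}\in A$, and $A$ is stable under $\rhd$, so $S_{i}\in A$; finally $A$ is stable under $\curlyvee$, so $T\in A$, completing the induction. I expect the only real work to be the bookkeeping in the displayed identity --- in particular checking that $\rhd$ against the one-edge tree $\addeux{$d_{i}$}$ collapses to a single summand with the decoration $d_{i}$ in the correct position, and treating separately the children of $R_{T}$ that are leaves of $T$. There is no deeper obstacle: the operation $\rhd$ with $\addeux{$d$}$ on the right realizes ``plant a new root below a tree through a $d$-coloured edge'', and $\curlyvee$ realizes ``merge trees along their common root'', and these two moves manifestly suffice to rebuild any tree from its edges.
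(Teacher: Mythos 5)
Your proof is correct and follows essentially the same route as the paper: both arguments induct on the number of edges using the decomposition $T=B_{d_{1}}(T_{1})\curlyvee\cdots\curlyvee B_{d_{k}}(T_{k})$ with $B_{d_{i}}(T_{i})=T_{i}\rhd\addeux{$d_i$}$ when $T_{i}$ is nonempty. The only cosmetic difference is that the paper splits the induction into the cases $k=1$ and $k\geq 2$ (applying the hypothesis to the $B_{d_{i}}(T_{i})$ in the latter case), whereas you apply it directly to the $T_{i}$ and handle empty $T_{i}$ separately.
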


{\bf Notation.} To prove the previous proposition, we introduce a notation. Let $T_{1}, \hdots , T_{k} $ are trees (possibly empty) of $\cc_{CK}^{\mathcal{D}}$ and $d_{1}, \hdots , d_{k} \in \mathcal{D}$. Then $B_{d_{1} \otimes \hdots \otimes d_{k}}(T_{1} \otimes \hdots \otimes T_{k})$ is the tree obtained by grafting each $T_{i}$ on a common root with an edge decorated by $d_{i}$. For examples, if $a,b,c,d \in \mathcal{D}$,
\begin{eqnarray*}
\begin{array}{rcl|rcl|rcl}
B_{a}(\tun) & = & \addeux{$a$} & B_{a \otimes b}(\tun \otimes \tun) & = & \adtroisun{$b$}{$a$} & B_{a}(\addeux{$b$}) & = & \adtroisdeux{$a$}{$b$} \\
B_{a}(\adtroisdeux{$b$}{$c$}) & = & \adquatrecinq{$a$}{$b$}{$c$} & B_{a \otimes b}(\addeux{$c$} \otimes \tun) & = & \adquatredeux{$b$}{$a$}{$c$} & B_{a \otimes b}(\tun \otimes \addeux{$c$}) & = & \adquatretrois{$b$}{$a$}{$c$} \\
B_{a \otimes b \otimes c}(\tun \otimes \tun \otimes \tun) & = & \adquatreun{$a$}{$b$}{$c$} & B_{a}(\adtroisun{$c$}{$b$}) & = & \adquatrequatre{$a$}{$b$}{$c$} & B_{a \otimes b}(\adtroisun{$d$}{$c$} \otimes \tun) & = & \adcinqsix{$a$}{$b$}{$c$}{$d$} 
\end{array}
\end{eqnarray*}

\begin{proof}
Let us prove that $(\mathbf{T}_{CK}^{\mathcal{D}}, \curlyvee , \rhd )$ is generated as $\mathcal{C}om \mathcal{P}reLie$-algebra by $\addeux{$d$}$, $d \in \mathcal{D}$ by induction on the edges degree $n$. If $n=1$, this is obvious. Let $T \in \mathbf{T}_{CK}^{\mathcal{D}}$ be a tree of edges degree $n \geq 2$. Let $k$ be an integer such that $T = B_{d_{1} \otimes \hdots \otimes d_{k}}(T_{1} \otimes \hdots \otimes T_{k})$ with $d_{1}, \hdots , d_{k} \in \mathcal{D}$ and $T_{1}, \hdots , T_{k} $ trees (possibly empty) of $\cc_{CK}^{\mathcal{D}}$. Then:
\begin{enumerate}
\item If $k=1$, $T = B_{d_{1}}(T_{1})$ with $\left| T_{1} \right|_{e} = n-1 \geq 1$. By induction hypothesis, $T_{1}$ can be constructed from trees $\addeux{$d$}$, $d \in \mathcal{D}$, with the operations $\curlyvee$ and $\rhd$. So $T = T_{1} \rhd \addeux{$d_{1}$}$ can be also constructed from trees $\addeux{$d$}$, $d \in \mathcal{D}$, with the operations $\curlyvee$ and $\rhd$.
\item Suppose that $k \geq 2$. Then, for all $i$, $ 1 \leq \left| B_{d_{i}}(T_{i}) \right|_{e} \leq n-1 $. By induction hypothesis, the trees $B_{d_{i}}(T_{i})$ can be constructed from trees $\addeux{$d$}$, $d \in \mathcal{D}$, with the operations $\curlyvee$ and $\rhd$. So $T = B_{d_{1}}(T_{1}) \curlyvee \hdots \curlyvee B_{d_{k}}(T_{k})$ can be also constructed from trees $\addeux{$d$}$, $d \in \mathcal{D}$, with the operations $\curlyvee$ and $\rhd$.
\end{enumerate}
We conclude with the induction principle.
\end{proof}
\vspace{0.5cm}

{\bf Remarks.} \begin{enumerate}
\item $(\mathbf{T}_{CK}^{\mathcal{D}}, \curlyvee , \rhd )$ is not the free $\mathcal{C}om \mathcal{P}reLie$-algebra generated by $\addeux{$d$}$, $d \in \mathcal{D}$. For example,
$$ \addeux{$a$} \rhd (\addeux{$b$} \rhd \addeux{$c$}) = \adquatrequatre{$c$}{$a$}{$b$} + \adquatrecinq{$c$}{$b$}{$a$} = (\addeux{$a$} \curlyvee \addeux{$b$}) \rhd \addeux{$c$} + (\addeux{$a$} \rhd \addeux{$b$}) \rhd \addeux{$c$} . $$
\item A description of the free $\mathcal{C}om \mathcal{P}reLie$-algebra is given in \cite{Foissy13}.
\end{enumerate}

\subsection{Noncommutative case}

We give a noncommutative version of $ \cc_{CK} $. To do this, we work on the algebra $ \hh_{po} $.

\begin{defi} \label{defipart}
Let $ (F,\sigma^{F}) $ be a nonempty preordered forest. In particular, $ F $ is a nonempty rooted forest. Let $ \boldsymbol{e} $ be a contraction of $ F $, $ Part_{\boldsymbol{e}}(F) $ the partition of $ F $ by $\boldsymbol{e}$ and $ Cont_{\boldsymbol{e}}(F) $ the contracted of $ F $ by $\boldsymbol{e}$ (see definition \ref{defipartcom}). Then:
\begin{enumerate}
\item $ Part_{\boldsymbol{e}}(F) $ is a preordered forest $ (Part_{\boldsymbol{e}}(F),\sigma^{P}) $ where $ \sigma^{P} : v \in V(Part_{\boldsymbol{e}}(F)) \mapsto \sigma^{F}(v) $. In other words, we keep the initial preorder of the vertices of $ F $ in $ Part_{\boldsymbol{e}}(F) $.
\item $ Cont_{\boldsymbol{e}}(F) $ is also a preordered forest $ (Cont_{\boldsymbol{e}}(F), \sigma^{C}) $ where $ \sigma^{C} : V(Cont_{\boldsymbol{e}}(F)) \rightarrow \{1, \hdots, p \} $ is the surjection ($ p \leq \left| Cont_{\boldsymbol{e}}(F) \right|_{v} $) such that if $ A,B $ are two connected components of $ Part_{\boldsymbol{e}}(F) $, if $ a $ (resp. $ b $) is the vertex obtained by contracting $ A $ (resp. $ B $) in $ F $, then
\begin{eqnarray} \label{defisigcont}
\left\lbrace 
\begin{array}{rcl}
\sigma^{F}(R_A) < \sigma^{F}(R_B) & \Longrightarrow & \sigma^{C}(a) < \sigma^{C}(b),\\
\sigma^{F}(R_A) = \sigma^{F}(R_B) & \Longrightarrow & \sigma^{C}(a) = \sigma^{C}(b),\\
\sigma^{F}(R_A) > \sigma^{F}(R_B) & \Longrightarrow & \sigma^{C}(a) > \sigma^{C}(b).
\end{array}
\right. 
\end{eqnarray}
In other words, we contract each connected component of $ Part_{\boldsymbol{e}}(F) $ to its root and we keep the initial preorder of the roots.
\end{enumerate}
\end{defi}

{\bf Example.} {Let $ T = \tdquatredeux{$2$}{$3$}{$3$}{$1$} $ be a preordered tree. Then
$$\begin{array}{|c|c|c|c|c|c|c|c|c|c|}
\hline \mbox{contraction } \boldsymbol{e} &\tdquatredeux{$2$}{$3$}{$3$}{$1$}&\tdquatredeuxa{$2$}{$3$}{$3$}{$1$}&\tdquatredeuxb{$2$}{$3$}{$3$}{$1$}&\tdquatredeuxc{$2$}{$3$}{$3$}{$1$}
&\tdquatredeuxd{$2$}{$3$}{$3$}{$1$}&\tdquatredeuxe{$2$}{$3$}{$3$}{$1$}&\tdquatredeuxf{$2$}{$3$}{$3$}{$1$}&\tdquatredeuxg{$2$}{$3$}{$3$}{$1$}\\
\hline Part_{\boldsymbol{e}}(T)&\tdquatredeux{$2$}{$3$}{$3$}{$1$}&\tddeux{$2$}{$3$} \tddeux{$3$}{$1$} & \tdun{$1$} \tdtroisun{$2$}{$3$}{$3$}& \tdtroisdeux{$2$}{$3$}{$1$} \tdun{$3$}& \tdun{$1$} \tddeux{$2$}{$3$} \tdun{$3$} & \tdun{$2$} \tdun{$3$} \tddeux{$3$}{$1$} &\tdun{$1$} \tddeux{$2$}{$3$} \tdun{$3$} & \tdun{$1$} \tdun{$2$} \tdun{$3$} \tdun{$3$} \\
\hline Cont_{\boldsymbol{e}}(T)& \tdun{$1$} & \tddeux{$1$}{$2$} & \tddeux{$2$}{$1$} & \tddeux{$1$}{$2$} & \tdtroisdeux{$2$}{$3$}{$1$} & \tdtroisun{$1$}{$2$}{$2$} & \tdtroisun{$2$}{$3$}{$1$} & \tdquatredeux{$2$}{$3$}{$3$}{$1$} \\
\hline \end{array}$$
where, in the first line, the edges not belonging to $ \boldsymbol{e} $ are striked out.}
\\

Let $ I_{po} $ be the ideal of $\hh_{po}$ generated by the elements $ F \tdun{$i$} - \tilde{F} $ with $ F \tdun{$i$} \in \hh_{po} $ and $ \tilde{F} $ the forest contructed from $ F \tdun{$i$} $ by deleting the vertex $ \tdun{$i$} $ and keeping the same preorder on $ V(F) $. For example,
\begin{itemize}
\item if $ F \tdun{$i$} = \tdtroisun{$3$}{$3$}{$1$} \tdun{$2$} $ then $ \tilde{F} = \tdtroisun{$2$}{$2$}{$1$} $,
\item if $ F \tdun{$i$} = \tdquatredeux{$2$}{$3$}{$1$}{$2$} \tdun{$2$} $ then $\tilde{F} = \tdquatredeux{$2$}{$3$}{$1$}{$2$} $.
\end{itemize}

Let $ \cc_{po} $ be the quotient algebra $ \hh_{po}/I_{po} $. So one identifies the unit $ 1 $ (for the concatenation) with the tree $ \tdun{$1$} $. Note that $ \cc_{po} $ is a graded algebra by the number of edges. We note in the same way a forest and his class in $\cc_{po}$. We define on $ \cc_{po} $ a contraction coproduct on each preordered forest $ F \in \cc_{po} $:
\begin{eqnarray*}
\Delta_{\cc_{po}} (F) & = & \sum_{\boldsymbol{e} \models E(F)} Part_{\boldsymbol{e}}(F) \otimes Cont_{\boldsymbol{e}}(F) ,\\
& = & F \otimes \tdun{$1$} + \tdun{$1$} \otimes F + \sum_{\boldsymbol{e} \mmodels E(F)} Part_{\boldsymbol{e}}(F) \otimes Cont_{\boldsymbol{e}}(F) .
\end{eqnarray*}

{\bf Examples.} {\begin{eqnarray*}
\Delta_{\cc_{po}} ( \tdun{$1$} ) & = & \tdun{$1$} \otimes \tdun{$1$} \\
\Delta_{\cc_{po}} ( \tddeux{$2$}{$1$} ) & = & \tddeux{$2$}{$1$} \otimes \tdun{$1$} + \tdun{$1$} \otimes \tddeux{$2$}{$1$} \\
\Delta_{\cc_{po}} ( \tdtroisun{$2$}{$2$}{$1$} ) & = & \tdtroisun{$2$}{$2$}{$1$} \otimes \tdun{$1$} + \tdun{$1$} \otimes \tdtroisun{$2$}{$2$}{$1$} + \tddeux{$2$}{$1$} \otimes \tddeux{$1$}{$1$} + \tddeux{$1$}{$1$} \otimes \tddeux{$2$}{$1$} \\
\Delta_{\cc_{po}} ( \tddeux{$2$}{$4$} \tddeux{$3$}{$1$} ) & = & \tddeux{$2$}{$4$} \tddeux{$3$}{$1$} \otimes \tdun{$1$} + \tdun{$1$} \otimes \tddeux{$2$}{$4$} \tddeux{$3$}{$1$} + \tddeux{$1$}{$2$} \otimes \tddeux{$2$}{$1$} + \tddeux{$2$}{$1$} \otimes \tddeux{$1$}{$2$} \\
\Delta_{\cc_{po}} ( \tdquatredeux{$2$}{$3$}{$3$}{$1$}) & = & \tdquatredeux{$2$}{$3$}{$3$}{$1$} \otimes \tdun{$1$} + \tdun{$1$} \otimes \tdquatredeux{$2$}{$3$}{$3$}{$1$} + \tddeux{$2$}{$3$} \tddeux{$3$}{$1$} \otimes \tddeux{$1$}{$2$} + \tdtroisun{$1$}{$2$}{$2$} \otimes \tddeux{$2$}{$1$} + \tdtroisdeux{$2$}{$3$}{$1$} \otimes \tddeux{$1$}{$2$} + \tddeux{$1$}{$2$} \otimes \tdtroisdeux{$2$}{$3$}{$1$} \\
& & + \tddeux{$2$}{$1$} \otimes \tdtroisun{$1$}{$2$}{$2$} + \tddeux{$1$}{$2$} \otimes \tdtroisun{$2$}{$3$}{$1$} \\
\Delta_{\cc_{po}} ( \tdtroisun{$3$}{$5$}{$2$} \tddeux{$4$}{$1$} ) & = & \tdtroisun{$3$}{$5$}{$2$} \tddeux{$4$}{$1$} \otimes \tdun{$1$} + \tdun{$1$} \otimes \tdtroisun{$3$}{$5$}{$2$} \tddeux{$4$}{$1$} + \tddeux{$2$}{$1$} \otimes \tddeux{$2$}{$4$} \tddeux{$3$}{$1$} + \tddeux{$1$}{$2$} \otimes \tddeux{$3$}{$2$} \tddeux{$4$}{$1$} + \tddeux{$2$}{$1$} \otimes \tdtroisun{$2$}{$3$}{$1$} \\
& & + \tddeux{$3$}{$2$} \tddeux{$4$}{$1$} \otimes \tddeux{$1$}{$2$} + \tddeux{$2$}{$4$} \tddeux{$3$}{$1$} \otimes \tddeux{$2$}{$1$} + \tdtroisun{$2$}{$3$}{$1$} \otimes \tddeux{$2$}{$1$}
\end{eqnarray*}}

{\bf Remark.} {$ \Delta_{\cc_{po}} $ is non cocommutative (see for example the coproduct of $ \tdquatredeux{$2$}{$3$}{$3$}{$1$}$). In particular, if $ T $ is a preordered tree and $ \boldsymbol{e} \models E(T) $, $ Cont_{\boldsymbol{e}}(T) $ is a preordered tree and $ Part_{\boldsymbol{e}}(T) $ can be disconnected. The second component of the coproduct is linear: a tree instead of a polynomial in trees. This is a right combinatorial Hopf algebra (see \cite{Loday10}).}

\begin{prop}
\begin{enumerate}
\item $ \Delta_{\cc_{po}} $ is a graded algebra morphism.
\item $ \Delta_{\cc_{po}} $ is coassociative.
\end{enumerate}
\end{prop}

\begin{proof}
\begin{enumerate}
\item Let $ F,G $ be two preordered forests. Then
\begin{eqnarray*}
\Delta_{\cc_{po}} (FG) & = & \sum_{\boldsymbol{e} \models E(FG)} Part_{\boldsymbol{e}}(FG) \otimes Cont_{\boldsymbol{e}}(FG) \\
& = & \sum_{\boldsymbol{e} \models E(F), \boldsymbol{f} \models E(G)} \left( Part_{\boldsymbol{e}}(F) Part_{\boldsymbol{f}}(G) \right) \otimes \left( Cont_{\boldsymbol{e}}(F) Cont_{\boldsymbol{f}}(G) \right) \\
& = & \left( \sum_{\boldsymbol{e} \models E(F)} Part_{\boldsymbol{e}}(F) \otimes Cont_{\boldsymbol{e}}(F) \right) \left( \sum_{\boldsymbol{f} \models E(G)} Part_{\boldsymbol{f}}(G) \otimes Cont_{\boldsymbol{f}}(G) \right) \\
& = & \Delta_{\cc_{po}} (F) \Delta_{\cc_{po}}(G) ,
\end{eqnarray*}
and $ \Delta_{\cc_{po}} $ is an algebra morphism. It is a graded algebra morphism with (\ref{relatdeg}).

\item Let $ F $ be a nonempty preordered forest. Then
\begin{eqnarray*}
& & (\Delta_{\cc_{po}} \otimes Id) \circ \Delta_{\cc_{po}} (F) \\
& = & \sum_{\boldsymbol{e} \models E(F)} \Delta_{\cc_{po}} (Part_{\boldsymbol{e}}(F)) \otimes Cont_{\boldsymbol{e}}(F) \\
& = & \sum_{\boldsymbol{e} \models E(F)} \sum_{\boldsymbol{f} \models E(Part_{\boldsymbol{e}}(F))} Part_{\boldsymbol{f}}(Part_{\boldsymbol{e}}(F)) \otimes Cont_{\boldsymbol{f}}(Part_{\boldsymbol{e}}(F)) \otimes Cont_{\boldsymbol{e}}(F) \\
& = & \sum_{\boldsymbol{f} \subseteq \boldsymbol{e} \subseteq E(F)} Part_{\boldsymbol{f}}(F) \otimes Cont_{\boldsymbol{f}}(Part_{\boldsymbol{e}}(F)) \otimes Cont_{\boldsymbol{e}}(F) ,
\end{eqnarray*}
and
\begin{eqnarray*}
& & (Id \otimes \Delta_{\cc_{po}}) \circ \Delta_{\cc_{po}} (F) \\
& = & \sum_{\boldsymbol{f} \models E(F)} Part_{\boldsymbol{f}}(F) \otimes \Delta_{\cc_{po}} (Cont_{\boldsymbol{f}}(F)) \\
& = & \sum_{\boldsymbol{f} \models E(F)} \sum_{\boldsymbol{e} \models E(Cont_{\boldsymbol{f}}(F))} Part_{\boldsymbol{f}}(F) \otimes Part_{\boldsymbol{e}}(Cont_{\boldsymbol{f}}(F)) \otimes Cont_{\boldsymbol{e}}(Cont_{\boldsymbol{f}}(F)) \\
& = & \sum_{\boldsymbol{f} \models E(F), \boldsymbol{e} \subseteq \overline{\boldsymbol{f}}} Part_{\boldsymbol{f}}(F) \otimes Part_{\boldsymbol{e}}(Cont_{\boldsymbol{f}}(F)) \otimes Cont_{\boldsymbol{e} \cup \boldsymbol{f}}(F) ,
\end{eqnarray*}
where to the last equality we use that $ E(Cont_{\boldsymbol{f}}(F)) = \overline{\boldsymbol{f}} $ the complement of $ \boldsymbol{f} $ in $ E(F) $ and $ Cont_{\boldsymbol{e}}(Cont_{\boldsymbol{f}}(F)) = Cont_{\boldsymbol{e} \cup \boldsymbol{f}}(F) $.\\

Remark that $ \{ (\boldsymbol{e} , \boldsymbol{f}) \:\mid \: \boldsymbol{f} \subseteq \boldsymbol{e} \subseteq E(F) \} $ and $ \{ (\boldsymbol{e} , \boldsymbol{f}) \:\mid \: \boldsymbol{f} \models E(F), \boldsymbol{e} \subseteq \overline{\boldsymbol{f}} \} $ are in bijection:
\begin{eqnarray*}
\left\lbrace 
\begin{array}{rcl}
\{ (\boldsymbol{e} , \boldsymbol{f}) \:\mid \: \boldsymbol{f} \subseteq \boldsymbol{e} \subseteq E(F) \} & \rightarrow & \{ (\boldsymbol{e} , \boldsymbol{f}) \:\mid \: \boldsymbol{f} \models E(F), \boldsymbol{e} \subseteq \overline{\boldsymbol{f}} \} \\
(\boldsymbol{e} , \boldsymbol{f}) & \rightarrow & (\boldsymbol{e} \setminus \boldsymbol{f} , \boldsymbol{f}) \\
(\boldsymbol{e} \cup \boldsymbol{f}, \boldsymbol{f}) & \leftarrow & (\boldsymbol{e} , \boldsymbol{f}) .
\end{array}
\right. 
\end{eqnarray*}
Moreover,
\begin{itemize}
\item in $ Cont_{\boldsymbol{f}}(Part_{\boldsymbol{e}}(F)) $ with $ \boldsymbol{f} \subseteq \boldsymbol{e} \subseteq E(F) $: the edges belong to $ \boldsymbol{e} \cap \overline{\boldsymbol{f}} = \boldsymbol{e} \setminus \boldsymbol{f} $; the vertices are the connected components of $ Part_{\boldsymbol{e} \cap \boldsymbol{f}}(F) = Part_{\boldsymbol{f}}(F) $. The preorder on the vertices is given by the preorder on the roots of the connected components of $Part_{\boldsymbol{f}}(F)$.
\item in $ Part_{\boldsymbol{e}}(Cont_{\boldsymbol{f}}(F)) $ with $ \boldsymbol{f} \models E(F), \boldsymbol{e} \subseteq \overline{\boldsymbol{f}} $: the edges belong to $ \overline{\boldsymbol{f}} \cap \boldsymbol{e} = \boldsymbol{e} \setminus \boldsymbol{f} = \boldsymbol{e} $; the vertices are the connected components of $ Part_{\boldsymbol{f}}(F) $. As in the precedent case, the preorder on the vertices is given by the preorder on the roots of the connected components of $Part_{\boldsymbol{f}}(F)$.
\end{itemize}
So $ Cont_{\boldsymbol{f}}(Part_{\boldsymbol{e}}(F)) $ and $ Part_{\boldsymbol{e}}(Cont_{\boldsymbol{f}}(F)) $ are the same forests with the same preorder on the vertices.\\

Therefore $ (\Delta_{\cc_{po}} \otimes Id) \circ \Delta_{\cc_{po}} (F) = (Id \otimes \Delta_{\cc_{po}}) \circ \Delta_{\cc_{po}} (F) $.
\end{enumerate}
\end{proof}
\\

We now define
\begin{eqnarray*}
\varepsilon : \left\lbrace \begin{array}{rcl}
\cc_{po} & \rightarrow & \mathbb{K} \\
F \mbox{ forest } & \mapsto & \delta_{F,\tdun{$1$}}.
\end{array} \right. 
\end{eqnarray*}
$ \varepsilon $ is an algebra morphism.

\begin{prop}
$ \varepsilon $ is a counit for the coproduct $ \Delta_{\cc_{po}} $.
\end{prop}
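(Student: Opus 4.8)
The plan is to check the two counit axioms $(\varepsilon \otimes Id)\circ \Delta_{\cc_{po}} = Id$ and $(Id \otimes \varepsilon)\circ \Delta_{\cc_{po}} = Id$ (under the canonical identifications $\mathbb{K}\otimes \cc_{po}\cong \cc_{po}\cong \cc_{po}\otimes\mathbb{K}$) directly on the basis of $\cc_{po}$ given by classes of preordered forests, by deciding which summands of the coproduct $\Delta_{\cc_{po}}(F)=\sum_{\boldsymbol{e}\models E(F)}Part_{\boldsymbol{e}}(F)\otimes Cont_{\boldsymbol{e}}(F)$ survive after evaluating $\varepsilon$ on one of the two factors. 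The only inputs I need are that $\varepsilon(G)=1$ when $G$ has no edge and $\varepsilon(G)=0$ otherwise, together with the observation (recorded in the remarks around (\ref{relatdeg})) that $E(Part_{\boldsymbol{e}}(F))=\boldsymbol{e}$ and $E(Cont_{\boldsymbol{e}}(F))=\overline{\boldsymbol{e}}$, the complement of $\boldsymbol{e}$ in $E(F)$. The displayed description of $\varepsilon$ makes sense on the quotient because $I_{po}$ is homogeneous for the grading by number of edges — each generator $F\tdun{$1$}-\tilde F$ has both terms with the same number of edges — so "being edgeless" is a property of the class, and $\varepsilon$ is just the projection onto the degree-$0$ component $\mathbb{K}\tdun{$1$}$ followed by the identification with $\mathbb{K}$.

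For the first axiom, apply $\varepsilon\otimes Id$: the summand indexed by $\boldsymbol{e}$ contributes $\varepsilon(Part_{\boldsymbol{e}}(F))\,Cont_{\boldsymbol{e}}(F)$, which is nonzero only when $Part_{\boldsymbol{e}}(F)$ has no edge, i.e.\ when $\boldsymbol{e}=E(Part_{\boldsymbol{e}}(F))=\emptyset$. For $\boldsymbol{e}=\emptyset$ (the empty contraction) one has $Cont_{\emptyset}(F)=F$ while $Part_{\emptyset}(F)$ is a product of $\left| F\right|_{v}$ copies of $\tdun{$1$}$, so $\varepsilon(Part_{\emptyset}(F))=1$ since $\varepsilon$ is an algebra morphism with $\varepsilon(\tdun{$1$})=1$. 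Hence $(\varepsilon\otimes Id)\circ\Delta_{\cc_{po}}(F)=F$. For the second axiom, apply $Id\otimes\varepsilon$: the $\boldsymbol{e}$-summand contributes $Part_{\boldsymbol{e}}(F)\,\varepsilon(Cont_{\boldsymbol{e}}(F))$, nonzero only when $Cont_{\boldsymbol{e}}(F)$ has no edge, i.e.\ when $\overline{\boldsymbol{e}}=E(Cont_{\boldsymbol{e}}(F))=\emptyset$, that is $\boldsymbol{e}=E(F)$ (the total contraction); then $Cont_{E(F)}(F)=\tdun{$1$}$ and $Part_{E(F)}(F)=F$, so $(Id\otimes\varepsilon)\circ\Delta_{\cc_{po}}(F)=F$.

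Both identities thus hold on every basis element of $\cc_{po}$, hence on all of $\cc_{po}$ by linearity, and therefore $\varepsilon$ is a counit for $\Delta_{\cc_{po}}$. There is no real obstacle here: the computation is a routine bookkeeping of the two degenerate contractions $\boldsymbol{e}=\emptyset$ and $\boldsymbol{e}=E(F)$, and the only point that genuinely needs a word is the well-definedness of $\varepsilon$ on the quotient, which is settled by the edge-homogeneity of $I_{po}$ noted above. (When $E(F)=\emptyset$, that is $F=\tdun{$1$}$, the empty and total contractions coincide and both computations reduce to $\Delta_{\cc_{po}}(\tdun{$1$})=\tdun{$1$}\otimes\tdun{$1$}$, consistently with the general argument.)
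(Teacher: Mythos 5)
Your proof is correct and follows essentially the same route as the paper: the paper writes $\Delta_{\cc_{po}}(F)=F\otimes\tdun{$1$}+\tdun{$1$}\otimes F+\sum_{\boldsymbol{e}\mmodels E(F)}Part_{\boldsymbol{e}}(F)\otimes Cont_{\boldsymbol{e}}(F)$ and applies $\varepsilon$ on one factor, which is exactly your identification of the empty and total contractions as the only surviving summands (via $E(Part_{\boldsymbol{e}}(F))=\boldsymbol{e}$ and $E(Cont_{\boldsymbol{e}}(F))=\overline{\boldsymbol{e}}$). Your explicit remark on the well-definedness of $\varepsilon$ on the quotient via the edge-homogeneity of $I_{po}$ is a small point the paper leaves implicit, but it does not change the argument.
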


\begin{proof}
Let $ F $ be a forest $  \in \cc_{po} $. We use the Sweedler notation:
\begin{eqnarray*}
\Delta_{\cc_{po}} (F) = F \otimes \tdun{$1$} + \tdun{$1$} \otimes F + \sum_{F} F^{(1)} \otimes F^{(2)} .
\end{eqnarray*}
Then
\begin{eqnarray*}
(\varepsilon \otimes Id) \circ \Delta_{\cc_{po}} (F) & = & \varepsilon (F) \tdun{$1$} + \varepsilon (\tdun{$1$}) F + \sum_{F} \varepsilon (F^{(1)}) \otimes F^{(2)} = F ,\\
(Id \otimes \varepsilon) \circ \Delta_{\cc_{po}} (F) & = & F \varepsilon(\tdun{$1$}) + \tdun{$1$} \varepsilon (F) + \sum_{F} F^{(1)} \varepsilon (F^{(2)}) = F .
\end{eqnarray*}
Therefore $ \varepsilon $ is a counit for the coproduct $ \Delta_{\cc_{po}} $.
\end{proof}
\\

As $ (\cc_{po}, \Delta_{\cc_{po}} , \varepsilon ) $ is gradued (by the number of edges) and connected, we have the following theorem:

\begin{theo}
$ (\cc_{po}, \Delta_{\cc_{po}} , \varepsilon ) $ is a Hopf algebra.
\end{theo}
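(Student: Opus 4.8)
The plan is to observe that the three preceding propositions, together with the fact that $\cc_{po}$ is graded by the number of edges with one-dimensional degree-zero component $\mathbb{K}\,\tdun{$1$}$, present $\cc_{po}$ (with its concatenation product and unit $\tdun{$1$}$) as a \emph{connected graded bialgebra}, and then to produce an antipode by the usual recursion. Indeed, since $\Delta_{\cc_{po}}$ and $\varepsilon$ are algebra morphisms, $\Delta_{\cc_{po}}$ is coassociative with counit $\varepsilon$, and $\Delta_{\cc_{po}}(\tdun{$1$}) = \tdun{$1$} \otimes \tdun{$1$}$, $\cc_{po}$ is a bialgebra; connectedness is what upgrades it to a Hopf algebra.

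First I would record that the reduced coproduct strictly lowers degree. If $F$ has edges degree $n \geq 1$ and $\boldsymbol{e} \mmodels E(F)$, then by (\ref{relatdeg}) both $Part_{\boldsymbol{e}}(F)$ and $Cont_{\boldsymbol{e}}(F)$ have edges degree between $1$ and $n-1$; hence, writing $\tilde{\Delta}_{\cc_{po}}(F) = \Delta_{\cc_{po}}(F) - F \otimes \tdun{$1$} - \tdun{$1$} \otimes F$, we get $\tilde{\Delta}_{\cc_{po}}(F) \in \bigoplus_{0 < k < n} (\cc_{po})_k \otimes (\cc_{po})_{n-k}$. This makes well-founded the recursion $S(\tdun{$1$}) = \tdun{$1$}$ and, for $F$ of edges degree $n \geq 1$,
\[
S(F) = -F - \sum_{\boldsymbol{e} \mmodels E(F)} S(Part_{\boldsymbol{e}}(F))\, Cont_{\boldsymbol{e}}(F),
\]
extended $\mathbb{K}$-linearly; this is modelled on the formula of Proposition \ref{antipodecom} in the commutative case.

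Next I would check that $S$ is a two-sided inverse of $\mathrm{id}_{\cc_{po}}$ for the convolution product on $\mathrm{End}(\cc_{po})$, i.e.
\[
m \circ (S \otimes \mathrm{id}) \circ \Delta_{\cc_{po}} = \eta\varepsilon = m \circ (\mathrm{id} \otimes S) \circ \Delta_{\cc_{po}}.
\]
The left-hand identity is immediate from the definition of $S$, applied to a forest $F$ using $\Delta_{\cc_{po}}(F) = F \otimes \tdun{$1$} + \tdun{$1$} \otimes F + \tilde{\Delta}_{\cc_{po}}(F)$. For the right-hand identity I would argue by induction on the number of edges, using coassociativity of $\Delta_{\cc_{po}}$ exactly as in the classical argument that in a bialgebra a left convolution-inverse of the identity is automatically a right inverse; equivalently, one proves directly the alternative formula $S(F) = -F - \sum_{\boldsymbol{e} \mmodels E(F)} Part_{\boldsymbol{e}}(F)\, S(Cont_{\boldsymbol{e}}(F))$, from which the right-sided identity follows. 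Since $\mathrm{End}(\cc_{po})$ with convolution is a monoid with unit $\eta\varepsilon$, the existence of such a two-sided inverse of $\mathrm{id}_{\cc_{po}}$ is precisely the assertion that $\cc_{po}$ is a Hopf algebra.

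I do not expect a genuine obstacle here: the entire content is the general principle that a connected graded bialgebra is a Hopf algebra, and the only point requiring care is the well-foundedness of the antipode recursion, which is guaranteed by the edge-grading and relation (\ref{relatdeg}). One may alternatively just cite this general lemma rather than reproving it, in which case the proof reduces to the single observation that $(\cc_{po}, \Delta_{\cc_{po}}, \varepsilon)$ is graded, connected, and a bialgebra by the preceding propositions.
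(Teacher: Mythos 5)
Your proposal is correct and follows essentially the same route as the paper: the preceding propositions establish that $\Delta_{\cc_{po}}$ is a coassociative algebra morphism with counit $\varepsilon$, and the paper then simply invokes the fact that a connected graded bialgebra (here graded by the number of edges, with degree-zero part spanned by $\tdun{$1$}$) is automatically a Hopf algebra. Your explicit antipode recursion matches the combinatorial description the paper records immediately after the theorem, modelled on Proposition \ref{antipodecom}.
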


We denote the antipode of the Hopf algebra $\cc_{po}$ by $S_{\cc_{po}}$. We have the same combinatorial description of $S_{\cc_{po}}$ as the commutative case (see proposition \ref{antipodecom}). We give some values of $S_{\cc_{po}}$:
\begin{itemize}
\item In edges degree 0, $S_{\cc_{po}}(\tdun{$1$})= \tdun{$1$}$.
\item In edges degree 1, $S_{\cc_{po}} (\tddeux{$1$}{$1$}) = - \tddeux{$1$}{$1$} - \tdun{$1$}$, $S_{\cc_{po}} (\tddeux{$1$}{$2$}) = - \tddeux{$1$}{$2$} - \tdun{$1$}$ and $S_{\cc_{po}} (\tddeux{$2$}{$1$}) = - \tddeux{$2$}{$1$} - \tdun{$1$}$.
\item In edges degree 2,
\begin{eqnarray*}
S_{\cc_{po}} (\tdtroisun{$1$}{$2$}{$2$}) & = & - \tdtroisun{$1$}{$2$}{$2$} + 2 \tddeux{$1$}{$2$} \tddeux{$3$}{$4$} + 2 \tddeux{$1$}{$2$},\\
S_{\cc_{po}} (\tdtroisun{$2$}{$3$}{$1$}) & = & - \tdtroisun{$2$}{$3$}{$1$} + \tddeux{$2$}{$1$} \tddeux{$3$}{$4$} + \tddeux{$1$}{$2$} \tddeux{$4$}{$3$} + \tddeux{$1$}{$2$} + \tddeux{$2$}{$1$},\\
S_{\cc_{po}} (\tdtroisdeux{$1$}{$2$}{$1$}) & = & - \tdtroisdeux{$1$}{$2$}{$1$} +\tddeux{$1$}{$2$} \tddeux{$3$}{$3$} + \tddeux{$2$}{$1$} \tddeux{$3$}{$4$} + \tddeux{$1$}{$1$} + \tddeux{$1$}{$2$} ,\\
S_{\cc_{po}} (\tddeux{$2$}{$3$} \tddeux{$3$}{$1$}) & = & - \tddeux{$2$}{$3$} \tddeux{$3$}{$1$} + \tddeux{$1$}{$2$} \tddeux{$4$}{$3$} + \tddeux{$2$}{$1$} \tddeux{$3$}{$4$} + \tddeux{$2$}{$1$} + \tddeux{$1$}{$2$}.
\end{eqnarray*}
\item In edges degree 3,
\begin{eqnarray*}
S_{\cc_{po}} (\tdquatredeux{$2$}{$3$}{$3$}{$1$}) & = & - \tdquatredeux{$2$}{$3$}{$3$}{$1$}
+ \tddeux{$2$}{$3$} \tddeux{$3$}{$1$} \tddeux{$4$}{$5$} - \tddeux{$1$}{$2$} \tddeux{$4$}{$3$} \tddeux{$5$}{$6$} - \tddeux{$2$}{$1$} \tddeux{$3$}{$4$} \tddeux{$5$}{$6$} - \tddeux{$2$}{$1$} \tddeux{$3$}{$4$} - \tddeux{$1$}{$2$} \tddeux{$3$}{$4$}
+ \tdtroisun{$1$}{$2$}{$2$} \tddeux{$4$}{$3$} \\
& & - 2 \tddeux{$1$}{$2$} \tddeux{$3$}{$4$} \tddeux{$6$}{$5$} - 2 \tddeux{$1$}{$2$} \tddeux{$4$}{$3$}
+ \tdtroisdeux{$2$}{$3$}{$1$} \tddeux{$3$}{$4$} - \tddeux{$2$}{$1$} \tddeux{$3$}{$4$} \tddeux{$5$}{$6$}
- \tddeux{$1$}{$2$} \tddeux{$3$}{$4$}
- \tddeux{$1$}{$2$} \tddeux{$4$}{$3$} \tddeux{$5$}{$6$}
- \tddeux{$2$}{$1$} \tddeux{$3$}{$4$} \\
& & + \tddeux{$1$}{$2$} \tdtroisdeux{$4$}{$5$}{$3$} + \tdtroisdeux{$2$}{$3$}{$1$}
+ \tddeux{$2$}{$1$} \tdtroisun{$3$}{$4$}{$4$} + \tdtroisun{$1$}{$2$}{$2$}
+ \tddeux{$1$}{$2$} \tdtroisun{$4$}{$5$}{$3$} + \tdtroisun{$2$}{$3$}{$1$} .
\end{eqnarray*}
\end{itemize}

Let $ \cc'_{hpo} $ be the $ \mathbb{K} $-algebra spanned by nonempty heap-preordered forests, $ \cc'_{o} $ be the $ \mathbb{K} $-algebra spanned by nonempty ordered forests, $ \cc'_{ho} $ be the $ \mathbb{K} $-algebra spanned by nonempty heap-ordered forests and $ \cc'_{NCK} $ be the $ \mathbb{K} $-algebra spanned by nonempty planar forests. We consider the quotients $ \cc_{hpo} = \cc'_{hpo} / (I_{po} \cap \cc'_{hpo}) $, $ \cc_{o} = \cc'_{o} / (I_{po} \cap \cc'_{o}) $, $ \cc_{ho} = \cc'_{ho} / (I_{po} \cap \cc'_{ho}) $ and $ \cc_{NCK} = \cc'_{NCK} / (I_{po} \cap \cc'_{NCK}) $. We have in this case a similar diagram to (\ref{diaginclusion}):
\begin{eqnarray*}
\xymatrix{
\cc_{NCK} \ar@{^{(}->}[r] & \cc_{ho} \ar@{^{(}->}[d] \ar@{^{(}->}[r] & \cc_{o} \ar@{^{(}->}[d]  \\
& \cc_{hpo} \ar@{^{(}->}[r] &  \cc_{po}
}
\end{eqnarray*}
where the arrows $ \hookrightarrow $ are injective morphisms of algebras. But they are not always morphisms of Hopf algebras (for the contraction coproduct):

\begin{theo}
\begin{enumerate}
\item $ \cc_{hpo} $ is a Hopf subalgebra of the Hopf algebra $ \cc_{po} $.
\item $ \cc_{o} $ is a Hopf subalgebra of the Hopf algebra $ \cc_{po} $.
\item $ \cc_{ho} $ is a Hopf subalgebra of the Hopf algebra $ \cc_{o} $ and of the Hopf algebra $ \cc_{hpo} $.
\item $ \cc_{NCK} $ is a left comodule of the Hopf algebra $ \cc_{ho} $.
\end{enumerate}
\end{theo}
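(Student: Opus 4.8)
\textbf{The plan is to} reduce all four assertions to a single statement about the behaviour of the operations $Part_{\boldsymbol{e}}$ and $Cont_{\boldsymbol{e}}$. The products on $\cc_{hpo}$, $\cc_{o}$, $\cc_{ho}$ and $\cc_{NCK}$ are given by concatenation, and the classes of heap-preordered, ordered, heap-ordered and planar forests are each stable under concatenation (Sections 1 and 3); the inclusions in the diagram are already known to be injective algebra morphisms. So everything follows once I show that whenever $F$ belongs to one of these classes of (pre)ordered forests and $\boldsymbol{e} \models E(F)$, both $Part_{\boldsymbol{e}}(F)$ and $Cont_{\boldsymbol{e}}(F)$ of Definition \ref{defipart} belong to the same class. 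Granting this: for (1)--(3), $\Delta_{\cc_{po}}$ restricts to a coproduct on the relevant subalgebra, which is then a graded connected Hopf algebra by the same argument used for $\cc_{po}$ (so the antipode is automatic); and for (4), the restriction of $\Delta_{\cc_{po}}$ to $\cc_{NCK}$ takes values in $\cc_{ho} \otimes \cc_{NCK}$, and the comodule axioms are exactly the restrictions of the coassociativity and counit identities of $\cc_{po}$. The passage to the quotients by $I_{po} \cap \cc'_{\bullet}$ is harmless, since each generator $F \tdun{$i$} - \tilde{F}$ of $I_{po}$ stays inside the class it belongs to.

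\textbf{The key step} is the following lemma about contractions of an arbitrary rooted forest $F$: if $\boldsymbol{e} \models E(F)$ and $A \neq B$ are connected components of $Part_{\boldsymbol{e}}(F)$ contracting to the vertices $a$ and $b$ of $Cont_{\boldsymbol{e}}(F)$, then $a \twoheadrightarrow b$ in $Cont_{\boldsymbol{e}}(F)$ implies $R_A \twoheadrightarrow R_B$ in $F$, with $R_A \neq R_B$. To prove it I would first note that the only vertex of a component $X$ whose $F$-parent lies outside $X$ is its root $R_X$ (the path inside the subtree $X$ from any vertex down to $R_X$ uses only edges of $\boldsymbol{e}$). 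Hence every edge of $\overline{\boldsymbol{e}}$ joining two distinct components $X, Y$ is an edge $R_X \rightarrow w$ of $F$ with $w \in Y$, so $R_X \twoheadrightarrow w \twoheadrightarrow R_Y$ in $F$. An oriented path $a \twoheadrightarrow b$ in $Cont_{\boldsymbol{e}}(F)$ is a sequence of such $\overline{\boldsymbol{e}}$-edges, and concatenating the corresponding paths in $F$ yields $R_A \twoheadrightarrow R_B$.

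\textbf{Parts (1), (2) and (3)} then follow quickly. In all cases $Part_{\boldsymbol{e}}(F)$ carries the restriction of the preorder of $F$; if $a \twoheadrightarrow b$ with $a \neq b$ in $Part_{\boldsymbol{e}}(F)$ then the same holds in $F$, so the heap condition (strict inequality), the totality and the antisymmetry of the order are all inherited. For $Cont_{\boldsymbol{e}}(F)$: if $F$ is ordered, distinct components have distinct roots with distinct $\sigma^F$-values, so by formula (\ref{defisigcont}) the map $\sigma^{C}$ is injective and $Cont_{\boldsymbol{e}}(F)$ is ordered; if $F$ is heap-preordered (resp. heap-ordered), then $a \twoheadrightarrow b$, $a \neq b$ in $Cont_{\boldsymbol{e}}(F)$ forces $R_A \twoheadrightarrow R_B$, $R_A \neq R_B$ in $F$ by the key lemma, hence $\sigma^F(R_A) > \sigma^F(R_B)$, hence $\sigma^{C}(a) > \sigma^{C}(b)$ by (\ref{defisigcont}). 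This gives (1) and (2), and (3) is the conjunction of the two.

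\textbf{Part (4) is where the real work lies.} I would view a planar forest $F$ as a heap-ordered forest through its depth-first (``north-west'') order, i.e. through the embedding $\phi$ of Section 1. Then $Part_{\boldsymbol{e}}(F)$ is heap-ordered by the computation above, hence lies in $\cc_{ho}$. The point to check is that $Cont_{\boldsymbol{e}}(F)$, obtained by contracting the edges of a planar forest, carries a canonical planar structure whose depth-first order coincides with $\sigma^{C}$ --- equivalently, that ordering the components of $Part_{\boldsymbol{e}}(F)$ by the $\sigma^F$-value of their roots reproduces the depth-first order of the contracted planar forest. The subtlety, which I expect to be the main obstacle, is that a component $A$ is in general \emph{not} visited contiguously by the depth-first traversal of $F$ (its root may carry children attached through edges of $\overline{\boldsymbol{e}}$), so one must argue that only the first-visit times of the roots $R_A$ matter; this is a careful but elementary induction on the tree structure. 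Once it is established, the restriction of $\Delta_{\cc_{po}}$ sends $\cc_{NCK}$ into $\cc_{ho} \otimes \cc_{NCK}$, and coassociativity and counitality of the coaction are inherited from $\cc_{po}$; a small example where some $Part_{\boldsymbol{e}}(F)$ fails to be planar-compatible confirms that $\cc_{NCK}$ is genuinely only a comodule and not a Hopf subalgebra.
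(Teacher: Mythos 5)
Your reduction of all four parts to the stability of $Part_{\boldsymbol{e}}$ and $Cont_{\boldsymbol{e}}$ under each class of forests is exactly the paper's strategy, and your proofs of (1)--(3) coincide with its: the conditions on $Part_{\boldsymbol{e}}(F)$ are inherited by restriction, the heap conditions on $Cont_{\boldsymbol{e}}(F)$ follow from $a \twoheadrightarrow b \Rightarrow R_A \twoheadrightarrow R_B$ together with (\ref{defisigcont}), injectivity of $\sigma^{C}$ in the ordered case follows from injectivity of $\sigma^{F}$ on roots, and (3) is obtained as $\cc_{ho} = \cc_{hpo} \cap \cc_{o}$. Your key lemma --- that an oriented path in $Cont_{\boldsymbol{e}}(F)$ lifts to an oriented path between the roots of the corresponding components, because any $\overline{\boldsymbol{e}}$-edge leaving a component must issue from its root --- is precisely the fact the paper uses but asserts without proof (``As $a \twoheadrightarrow b$, $R_A \twoheadrightarrow R_B$''), so spelling it out is a genuine improvement rather than a detour.

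Part (4) is where you diverge, and where your argument is not yet a proof. The paper stays inside $\cc_{ho}$ and only verifies two order-theoretic conditions on $(Cont_{\boldsymbol{e}}(F),\sigma^{C})$: the heap-ordered condition (already known from (3)) and the left-sibling condition that $a \twoheadrightarrow c$, $b \twoheadrightarrow c$ with $a$ to the left of $b$ forces $\sigma^{C}(a) < \sigma^{C}(b)$, which it deduces from $\sigma^{F}(R_A) < \sigma^{F}(R_B)$. You instead aim at the stronger and cleaner statement that $\sigma^{C}$ is literally the depth-first order of a canonical planar structure on the contracted forest, and you correctly isolate the delicate point (a component is not visited contiguously by the traversal of $F$, so one must reduce everything to the first-visit times of the roots $R_A$, which are the minima of the components for the depth-first order). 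But you then declare this ``a careful but elementary induction'' without performing it, and that induction --- including the definition of the planar structure on $Cont_{\boldsymbol{e}}(F)$, namely that the children of a contracted vertex are ordered by the traversal times of their roots --- is the entire content of part (4). As written, your (4) is a correct plan with its central step asserted rather than proved; the paper's lighter verification avoids this by checking only the two necessary conditions above. Your final remark, that some $Part_{\boldsymbol{e}}(F)$ leaves $\cc_{NCK}$ so that one only gets a left comodule, agrees with the paper's explicit counterexample in $\Delta_{\cc_{ho}}(\tdquatredeux{$1$}{$4$}{$2$}{$3$})$.
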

\vspace{0.5cm}

{\bf Notations.} {We denote by $ \Delta_{\cc_{hpo}} , \Delta_{\cc_{o}}, \Delta_{\cc_{ho}} $ the restrictions of $ \Delta_{\cc_{po}} $ to $ \cc_{hpo} , \cc_{o} , \cc_{ho} $.}
\\

{\bf Remark.} {$ \cc_{NCK} $ is not a Hopf subalgebra of the Hopf algebra $ \cc_{ho} $. For example, $ \tdquatredeux{$1$}{$4$}{$2$}{$3$} \in \cc_{NCK} $ and
\begin{eqnarray*}
\Delta_{\cc_{ho}} (\tdquatredeux{$1$}{$4$}{$2$}{$3$}) & = & \tdquatredeux{$1$}{$4$}{$2$}{$3$} \otimes \tdun{$ 1 $} + \tdun{$ 1 $} \otimes \tdquatredeux{$1$}{$4$}{$2$}{$3$} + \tdtroisdeux{$1$}{$2$}{$3$} \otimes \tddeux{$ 1 $}{$ 2 $} + \tdtroisun{$1$}{$3$}{$2$} \otimes \tddeux{$1$}{$2$} + 2 \tddeux{$1$}{$2$} \otimes \tdtroisun{$1$}{$3$}{$2$} \\
& & + \tddeux{$1$}{$2$} \otimes \tdtroisdeux{$1$}{$2$}{$3$} + \tddeux{$1$}{$4$} \tddeux{$2$}{$3$} \otimes \tddeux{$1$}{$2$} .
\end{eqnarray*}
Then $ \tddeux{$1$}{$4$} \tddeux{$2$}{$3$} \otimes \tddeux{$1$}{$2$} \notin \cc_{NCK} \otimes \cc_{NCK} $.
}
\\

\begin{proof}
\begin{enumerate}
\item $ \cc_{hpo} $ is a subalgebra of $ \cc_{po} $. Let us prove that if $ (F,\sigma^{F}) \in \cc_{hpo} $ and $ \boldsymbol{e} \models E(F) $ then $ (Cont_{\boldsymbol{e}}(F),\sigma^{C}) $ and $ (Part_{\boldsymbol{e}}(F),\sigma^{P}) \in \cc_{hpo} $.

If $ a,b \in V(Part_{\boldsymbol{e}}(F)) $, $a \neq b$, such that $ a \twoheadrightarrow b $ then $ a,b $ are the vertices of a subtree of $ (F,\sigma^{F}) \in \cc_{hpo} $ and $ \sigma^{F}(a) > \sigma^{F}(b) $. With definition \ref{defipart}, $ \sigma^{P}(a) > \sigma^{P}(b) $. So $ (Part_{\boldsymbol{e}}(F),\sigma^{P}) \in \cc_{hpo} $.

If $ a,b \in V(Cont_{\boldsymbol{e}}(F)) $, $a \neq b$, such that $ a \twoheadrightarrow b $, then $ a $ and $ b $ are the vertices obtained by contracting two connected components $ A $ and $ B $ of $ Part_{\boldsymbol{e}}(F) $. As $ a \twoheadrightarrow b $, $ R_{A} \twoheadrightarrow R_{B} $ and as $ (F,\sigma^{F}) \in \cc_{hpo} $, $ \sigma^{F}(R_{A}) > \sigma^{F}(R_{B}) $. Then, by definition \ref{defipart}, $ \sigma^{C}(a) > \sigma^{C}(b) $. So $ (Cont_{\boldsymbol{e}}(F),\sigma^{C}) \in \cc_{hpo} $.

Therefore if $ (F,\sigma^{F}) \in \cc_{hpo} $, $ \Delta_{\cc_{po}}(F) \in \cc_{hpo} \otimes \cc_{hpo} $ and $ \cc_{hpo} $ is a Hopf subalgebra of $ \cc_{po} $.

\item $ \cc_{o} $ is a subalgebra of $ \cc_{po} $. Let $ (F,\sigma^{F}) \in \cc_{o} $ and $ \boldsymbol{e} \models E(F) $. Let us show that $ (Cont_{\boldsymbol{e}}(F),\sigma^{C}) $ and $ (Part_{\boldsymbol{e}}(F),\sigma^{P}) \in \cc_{o} $, that is to say that $ \sigma^{C} $ and $ \sigma^{P} $ are bijective.

By definition \ref{defipart}, $ \sigma^{P} $ is bijective because we keep the initial order of the vertices of $ F $ in $ Part_{\boldsymbol{e}}(F) $. By definition, $ \sigma^{C} $ is a surjection. Let $ a,b \in V(Cont_{\boldsymbol{e}}(F)) $ such that $ \sigma^{C}(a) = \sigma^{C}(b) $ and $ A $ and $ B $ be the two connected components of $ Part_{\boldsymbol{e}}(F) $ associated with $ a $ and $ b $. With (\ref{defisigcont}), $ \sigma^{F}(R_{A}) = \sigma^{F}(R_{B}) $ and $ R_{A} = R_{B} $ because $ \sigma^{F} $ is bijective. So $ A = B $, $ a = b $ and $ \sigma^{C} $ is injective.

Therefore $ \sigma^{C} $ and $ \sigma^{P} $ are bijective and $ \cc_{o} $ is a Hopf subalgebra of $ \cc_{ho} $.

\item As $ \cc_{hpo} $ is a Hopf subalgebra of the Hopf algebra $ \cc_{po} $ and $ \cc_{o} $ is a Hopf subalgebra of the Hopf algebra $ \cc_{po} $, $\cc_{ho} = \cc_{hpo} \cap \cc_{o}$ is a Hopf subalgebra of $\cc_{hpo}$ and $\cc_{o}$.

\item Let us prove that if $ (F,\sigma^{F}) \in \cc_{NCK} $ and $ \boldsymbol{e} \models E(F) $ then $ (Cont_{\boldsymbol{e}}(F),\sigma^{C}) \in \cc_{NCK} $. As $ \cc_{ho} $ is a Hopf algebra, $ (Cont_{\boldsymbol{e}}(F),\sigma^{C}) \in \cc_{ho} $. So, if $ a,b \in V(Cont_{\boldsymbol{e}}(F)) $, such that $ a \twoheadrightarrow b $ then $ \sigma^{C}(a) \geq \sigma^{C}(b) $.

Moreover, if $ a,b,c \in V(Cont_{\boldsymbol{e}}(F)) $ three distinct vertices such that $ a \twoheadrightarrow c $, $ b \twoheadrightarrow c $ and $ a $ is on the left of $ b $. The vertices $ a,b $ and $ c $ are obtained by contracting of connected components $ A, B $ and $ C $ in $ F $. As $ a \twoheadrightarrow c $, $ b \twoheadrightarrow c $ and $ a $ is on the left of $ b $, $ R_{A} \twoheadrightarrow R_{C} $, $ R_{B} \twoheadrightarrow R_{C} $ and $ R_{A} $ is on the left of $ R_{B} $. As $ (F,\sigma^{F}) \in \cc_{NCK} $, $ \sigma^{F}(R_{A}) < \sigma^{F}(R_{B}) $. So $ \sigma^{C}(a) < \sigma^{C}(b) $.

Therefore if $ (F,\sigma^{F}) \in \cc_{NCK} $ and $ \boldsymbol{e} \models E(F) $ then $ (Cont_{\boldsymbol{e}}(F),\sigma^{C}) \in \cc_{NCK} $. Consequently, $ \Delta_{\cc_{ho}} (\cc_{NCK}) \subseteq \cc_{ho} \otimes \cc_{NCK} $.
\end{enumerate}
\end{proof}

\subsection{Formal series}

The algebras $\mathbf{C}_{po}$, $\mathbf{C}_{hpo}$, $ \mathbf{C}_{o} $, $ \mathbf{C}_{ho} $ and $ \mathbf{C}_{NCK} $ are graded by the number of edges. 
\vspace{0.2cm}

In the ordered case, we give some values in a small degrees :

$$\begin{array}{c|c|c|c|c|c|c|c|c}
n&1&2&3&4&5&6&7&8\\
\hline f_{n}^{\mathbf{C}_{o}} &2&9&76&805&10626&167839&3091768&65127465
\end{array}$$

These is the sequence A105785 in \cite{Sloane}.\\

Let us now study the heap-ordered case. We denote by $f_{n,l}^{\mathbf{C}_{ho}}$ the forests of $\mathbf{C}_{ho}$ of edges degree $n$ and of length $l$, and by $f_{n}^{\mathbf{C}_{ho}}$ the forests of $\mathbf{C}_{ho}$ of edges degree $n$. In small degree, we have the following values:
\begin{eqnarray*}
\left\lbrace \begin{array}{l}
f_{0,0}^{\mathbf{C}_{ho}} = f_{1,1}^{\mathbf{C}_{ho}} = 1 ,\\
f_{0,l}^{\mathbf{C}_{ho}} = 0 \hspace{1cm} \mbox{ for all } l \geq 1 ,\\
f_{1,l}^{\mathbf{C}_{ho}} = 0 \hspace{1cm} \mbox{ for all } l \neq 1 ,\\
f_{n,0}^{\mathbf{C}_{ho}} = 0 \hspace{1cm} \mbox{ for all } n \neq 1.
\end{array}
\right. 
\end{eqnarray*}
Let $n$ and $l$ be two integers $\geq 1$. To obtain a forest $F \in \mathbf{C}_{ho}$ of edges degree $n$ and of length $l$ (so $\left| F \right|_{v} = n+l$), we have two cases :
\begin{enumerate}
\item We consider a forest $G \in \mathbf{C}_{ho}$ of edges degree $n-1$ and of length $l$ and we graft the vertex $n+l$ on the vertex $i$ of $G$. For each forest $G$, we have $n+l-1$ possibilities.
\item We consider a forest $G \in \mathbf{C}_{ho}$ of edges degree $n-1$ and of length $l-1$. Then, for all $i \in \{ 1, \hdots , n+l-1 \}$, the forest $\tilde{G} \tddeux{$i$}{$n+l$} \ \ $ of edges degree $n$ and of vertices degree $l$ is an element of $\mathbf{C}_{ho}$ (where $\tilde{G}$ is the same forest of $G$ with for all $j \geq i$ the vertex $j$ in $G$ is the vertex $j+1$ in $\tilde{G}$). For each forest $G$, we have $n+l-1$ possibilities.
\end{enumerate}
So
$$f_{n,l}^{\mathbf{C}_{ho}} = (n+l-1) f_{n-1,l}^{\mathbf{C}_{ho}} + (n+l-1) f_{n-1,l-1}^{\mathbf{C}_{ho}} .$$
\vspace{0.2cm}

\noindent We give some values of $f_{n,l}^{\mathbf{C}_{ho}}$ in a small degrees and in a small lengths :

$$ \begin{tabular}{c|c|c|c|c|c|c}
$ n \backslash l $ & 0 & 1 & 2 & 3 & 4 & 5 \\
\hline 0 & 1 & 0 & 0 & 0 & 0 & 0 \\
\hline 1 & 0 & 1 & 0 & 0 & 0 & 0 \\
\hline 2 & 0 & 2 & 3 & 0 & 0 & 0 \\
\hline 3 & 0 & 6 & 20 & 15 & 0 & 0 \\
\hline 4 & 0 & 24 & 130 & 210 & 105 & 0 \\
\hline 5 & 0 & 120 & 924 & 2380 & 2520 & 945
\end{tabular} $$

Note that $f_{n,1}^{\mathbf{C}_{ho}} = n!$ for all $n \geq 1$. With the formula $f_{n}^{\mathbf{C}_{ho}} = \displaystyle\sum_{l \geq 0} f_{n,l}^{\mathbf{C}_{ho}}$, we obtain the number of forests of edges degree $n$. This gives:

$$\begin{array}{c|c|c|c|c|c|c|c}
n&0&1&2&3&4&5&6\\
\hline f_{n}^{\cc_{ho}} & 0&1&5&41&469&6889&123605
\end{array}$$

This is the sequence A032188 in \cite{Sloane}.\\

{\bf Remark.} Consider the map $\varphi : \mathbb{F}_{\mathbf{H}_{ho}} \rightarrow \Sigma$ defined by induction as follows. If $F=1$, $\varphi (F) = 1$ and if $F = \tdun{$1$} $, $\varphi (F) = (1)$. Let $F \in \mathbf{H}_{ho}$ be a forest of vertices degree $n$ and $v$ the vertex indexed by $n$. As $F$ is a heap-ordered forest, two cases are possible:
\begin{itemize}
\item The vertex $v$ is an isolated vertex.  We denote by $G$ the heap-ordered forest obtained by deleting the vertex $v$ of $F$. Thus $\varphi (G) = \tau'$ is well defined by induction. Then $\varphi (F)$ is the permutation $\tau$ defined by
\begin{eqnarray*}
\left\lbrace \begin{array}{rcl}
\tau(i) & = & \tau'(i) \hspace{0.5cm} \mbox{ if } i \neq n \\
\tau (n) & = & n.
\end{array} \right. 
\end{eqnarray*}
\item The vertex $v$ is a leaf and we denote by $k$ the indexe of $v'$ with $v \rightarrow v'$. Similarly, we denote by $G$ the heap-ordered forest obtained by deleting the vertex $v$ of $F$. $\varphi (G) = \tau'$ is well defined by induction and $\varphi (F)$ is the permutation $\tau$ defined by
\begin{eqnarray*}
\left\lbrace \begin{array}{rcl}
\tau(i) & = & \tau'(i) \hspace{0.5cm} \mbox{ if } i \neq k \\
\tau (k) & = & n\\
\tau (n) & = & \tau'(k) .
\end{array} \right. 
\end{eqnarray*}
\end{itemize}
Then $\varphi : \mathbb{F}_{\mathbf{H}_{ho}} \rightarrow \Sigma$ is a bijective map. Remark that, if $F \in \mathbb{F}_{\mathbf{H}_{ho}}$, each connected component of $F$ corresponds to one cycle in the writing of $\varphi (F)$ in product of disjoint cycles. Moreover, the restriction of $\varphi$ to the forests of $\mathbf{C}_{ho}$ is a bijective map with values in the set of permutations without fixed point.
\\

In the planar case, we can obtain the formal series. Let $t_{n}^{\cc_{NCK}}$ be the number of trees $\in \cc_{NCK}$ of edges degree $n$ and $f_{n}^{\cc_{NCK}}$ be the number of forests $\in \cc_{NCK}$ of edges degree $n$. We put $T_{\cc_{NCK}}(x) = \displaystyle\sum_{k \geq 0} t_{k}^{\cc_{NCK}} x^{k}$ and $F_{\cc_{NCK}}(x) = \displaystyle\sum_{k \geq 0} f_{k}^{\cc_{NCK}} x^{k}$. Then:

\begin{prop}
The formal series $ T_{\cc_{NCK}} $ and $ F_{\cc_{NCK}} $ are given by:
\begin{eqnarray*}
T_{\cc_{NCK}} (x) = \dfrac{1-2x-\sqrt{1-4x}}{2x} , \hspace{1cm} F_{\cc_{NCK}} (x) = \dfrac{2x}{4x-1+\sqrt{1-4x}}.
\end{eqnarray*}
\end{prop}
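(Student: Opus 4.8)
The plan is to enumerate the trees and forests of $\cc_{NCK}$ combinatorially and then convert the counts into the claimed closed forms, relying on the Catalan series recalled in~(\ref{seriesNCK}).

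First I would unwind the quotient defining $\cc_{NCK}$: a nonzero element of $\cc_{NCK}$ of edge-degree $n\geq 1$ is represented by a planar forest each of whose connected components carries at least one edge (the generators of the ideal $I_{po}$ only erase isolated vertices), while in edge-degree $0$ the only basis element is the unit $\tun$, identified with $1$. Consequently a nonempty tree of $\cc_{NCK}$ with $n$ edges is exactly a planar rooted tree with $n+1$ vertices, and there is no such tree for $n=0$ (the one-vertex tree having become the unit, not a nonempty tree). Since $T_{\hh_{NCK}}(x)=\frac{1-\sqrt{1-4x}}{2}$ is the generating series of planar trees counted by number of vertices (equivalently $T_{\hh_{NCK}}(x)=xF_{\hh_{NCK}}(x)$, by~(\ref{seriesNCK})), shifting the index yields
\[
T_{\cc_{NCK}}(x)=\sum_{n\geq 1} t_n^{\cc_{NCK}}\,x^n=\frac{1}{x}\bigl(T_{\hh_{NCK}}(x)-x\bigr)=\frac{1-\sqrt{1-4x}}{2x}-1=\frac{1-2x-\sqrt{1-4x}}{2x}.
\]

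Next I would use that a planar forest is by definition a totally ordered tuple of planar trees, so every element of $\cc_{NCK}$ is uniquely a concatenation $T_1\cdots T_k$ with $k\geq 0$ and each $T_i$ a nonempty planar tree, the value $k=0$ producing the unit $\tun$. Because concatenation is the product of $\cc_{NCK}$ and adds the numbers of edges, this gives the geometric-series identity
\[
F_{\cc_{NCK}}(x)=\sum_{k\geq 0} T_{\cc_{NCK}}(x)^k=\frac{1}{1-T_{\cc_{NCK}}(x)}.
\]
It then remains to simplify: $1-T_{\cc_{NCK}}(x)=\dfrac{4x-1+\sqrt{1-4x}}{2x}$, whence $F_{\cc_{NCK}}(x)=\dfrac{2x}{4x-1+\sqrt{1-4x}}$, which is the asserted formula.

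The only delicate point, and the place where it is easiest to go wrong, is the treatment of the unit $\tun$: after the quotient it is the degree-$0$ basis element, counted once among forests but not counted among the nonempty trees — this is precisely the source of the $-1$ in $T_{\cc_{NCK}}$ and of the $k=0$ term in the series for $F_{\cc_{NCK}}$. A sanity check of the first coefficients ($t^{\cc_{NCK}}_\bullet=0,1,2,5,\dots$ and $f^{\cc_{NCK}}_\bullet=1,1,3,10,\dots$) fixes the normalization, and everything else is a routine computation.
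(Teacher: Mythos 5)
Your proof is correct and follows essentially the same route as the paper: identify $T_{\cc_{NCK}}(x)$ with $F_{\hh_{NCK}}(x)-1=\frac{1-\sqrt{1-4x}}{2x}-1$ via the vertex-to-edge shift coming from the quotient by $I_{po}$, then use the free generation of $\cc_{NCK}$ by nonempty trees to get $F_{\cc_{NCK}}=1/(1-T_{\cc_{NCK}})$. The paper's proof is just terser; your extra care about the unit $\tun$ and the degree-$0$ normalization is exactly the point it leaves implicit.
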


\begin{proof}
With formula (\ref{seriesNCK}), we deduce that:
\begin{eqnarray*}
T_{\cc_{NCK}} (x) = \dfrac{1-\sqrt{1-4x}}{2x} - 1 = \dfrac{1-2x-\sqrt{1-4x}}{2x} .
\end{eqnarray*}
$ \cc_{NCK} $ is freely generated by the trees, therefore
\begin{eqnarray*}
F_{\cc_{NCK}} (x) & = & \dfrac{1}{1-T_{\cc_{NCK}} (x)} = \dfrac{2x}{4x-1+\sqrt{1-4x}}
\end{eqnarray*}
\end{proof}
\\

Then for all $ n \geq 1 $ $ t_{n}^{\cc_{NCK}} = \frac{1}{n+1} \left( \substack{2n \\ n} \right) $ is the $ n $-Catalan number, $ f_{n}^{\cc_{NCK}} = \left( \substack{ 2n-1 \\ n} \right) $ and this gives:

$$\begin{array}{c|c|c|c|c|c|c|c|c|c|c}
n&1&2&3&4&5&6&7&8&9&10\\
\hline t_{n}^{\cc_{NCK}} &1&2&5&14&42&132&429&1430&4862&16796 \\
\hline f_{n}^{\cc_{NCK}} &1&3&10&35&126&462&1716&6435&24310&92378
\end{array}$$

These are the sequences A000108 and A088218 in \cite{Sloane}.

\section{Hopf algebra morphisms}

Recall that the tensor algebra $T(V)$ over a $\mathbb{K}$-vector space $V$ is the tensor module
$$ T(V) = \mathbb{K} \oplus V \oplus V^{\otimes 2} \oplus \hdots \oplus V^{\otimes n} \oplus \hdots $$
equipped with the concatenation.

Dually, the tensor coalgebra $T^{c}(V)$ over a $\mathbb{K}$-vector space $V$ is the tensor module (as above) equiped with the coassociative coproduct $\Delta_{\mathcal{A}ss}$ called deconcatenation:
$$ \Delta_{\mathcal{A}ss} ((v_{1} \hdots v_{n})) = \sum_{i=0}^{n} (v_{1} \hdots v_{i}) \otimes (v_{i+1} \hdots v_{n}) . $$
We will say that a bialgebra $\mathbf{H}$ is cofree if, as a coalgebra, it is isomorphic to $T^{c}(Prim(\mathbf{H}))$ (for more details, see \cite{Loday05}).\\

We give the useful following lemma:

\begin{lemma} \label{lemcofree}
Let $(A,\Delta,\varepsilon)$ be a cofree Hopf algebra. Then
\begin{eqnarray*}
Ker(\tdelta \otimes Id_{A} - Id_{A} \otimes \tdelta) = Im(\tdelta) .
\end{eqnarray*}
\end{lemma}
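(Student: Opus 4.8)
The inclusion $Im(\tdelta)\subseteq Ker(\tdelta\otimes Id_{A}-Id_{A}\otimes\tdelta)$ is formal and needs nothing about cofreeness: coassociativity of $\Delta$ passes to the reduced coproduct, giving $(\tdelta\otimes Id_{A})\circ\tdelta=(Id_{A}\otimes\tdelta)\circ\tdelta$, hence $(\tdelta\otimes Id_{A}-Id_{A}\otimes\tdelta)\circ\tdelta=0$. So the whole content of the lemma is the reverse inclusion, i.e.\ exactness at the middle term of the reduced complex $Ker(\eps)\xrightarrow{\tdelta}Ker(\eps)^{\otimes 2}\xrightarrow{\tdelta\otimes Id-Id\otimes\tdelta}Ker(\eps)^{\otimes 3}$, and for this I would use cofreeness to pass to the model coalgebra.

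First I would reduce to the model. Since $A$ is cofree, fix an isomorphism of coalgebras $\Psi:A\xrightarrow{\sim}T^{c}(V)$ with $V=Prim(A)$ which sends the unit of $A$ to $1\in T^{c}(V)$ (this is part of what cofreeness means, cf.\ \cite{Loday05}); then $\Psi$ intertwines $\Delta$ with $\Delta_{\mathcal{A}ss}$ and the two distinguished group-likes, hence also the reduced coproducts and $\eps$, so it suffices to prove $Ker(\tdelta\otimes Id-Id\otimes\tdelta)\subseteq Im(\tdelta)$ when $A=T^{c}(V)$. On $T^{c}(V)$ I would use the length grading: $Ker(\eps)=\bigoplus_{n\geq 1}V^{\otimes n}$, and for a word $w=v_{1}\cdots v_{n}$ one has $\tdelta(w)=\sum_{i=1}^{n-1}(v_{1}\cdots v_{i})\otimes(v_{i+1}\cdots v_{n})$. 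This is a coalgebra grading, and both $\tdelta$ and the operator $\tdelta\otimes Id-Id\otimes\tdelta$ are homogeneous of degree $0$ for it, so one may fix a total degree $n$ and argue there.

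Fix $n\geq 2$ and let $z\in Ker(\eps)^{\otimes 2}$ be homogeneous of degree $n$ with $(\tdelta\otimes Id)(z)=(Id\otimes\tdelta)(z)$. Write $z=\sum_{p=1}^{n-1}z_{p}$ with $z_{p}\in V^{\otimes p}\otimes V^{\otimes(n-p)}$, and let $w_{p}\in V^{\otimes n}$ be the image of $z_{p}$ under the bijective regrouping map $V^{\otimes p}\otimes V^{\otimes(n-p)}\cong V^{\otimes n}$. Expanding the cocycle equation and comparing the components in a fixed slot $V^{\otimes a}\otimes V^{\otimes b}\otimes V^{\otimes c}$ with $a,b,c\geq 1$ and $a+b+c=n$: the left-hand side contributes exactly the regrouping of $w_{a+b}$ into this slot (it comes only from $z_{a+b}$, cut in its first factor), while the right-hand side contributes exactly the regrouping of $w_{a}$ (from $z_{a}$, cut in its second factor). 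Since regrouping is injective, $w_{a+b}=w_{a}$ for all admissible $a,b$; taking $b=1$ gives $w_{1}=w_{2}=\cdots=w_{n-1}=:w$, and then $z=\sum_{p=1}^{n-1}z_{p}=\sum_{p=1}^{n-1}(v_{1}\cdots v_{p})\otimes(v_{p+1}\cdots v_{n})=\tdelta(w)$ for $w=v_{1}\cdots v_{n}$. Summing over the total degree gives $Ker(\tdelta\otimes Id-Id\otimes\tdelta)\subseteq Im(\tdelta)$, and with the first paragraph the lemma follows.

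The only delicate points are (i) that the cofreeness isomorphism is compatible with $\tdelta$, which is precisely why one records that it preserves the unit, and (ii) the bookkeeping of which of the finitely many tridegree components of $Ker(\eps)^{\otimes 3}$ each of $(\tdelta\otimes Id)(z)$ and $(Id\otimes\tdelta)(z)$ hits. Once (ii) is sorted out, the whole argument collapses to the single combinatorial observation that the various ``cuts'' $z_{p}$ of $z$ must be mutually consistent, which is exactly the statement that the regrouping isomorphisms identify them all with one element $w$. I expect (ii) to be the fiddliest part to write cleanly, but there is no structural obstacle beyond this indexing.
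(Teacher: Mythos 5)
Your proposal is correct and follows essentially the same route as the paper: both reduce to the tensor coalgebra $T^{c}(V)$ with the deconcatenation coproduct and extract from the cocycle condition the compatibility of the various cuts (your identity $w_{a+b}=w_{a}$ under regrouping is exactly the paper's coefficient identity $a_{w_{1}w_{2},w_{3}}=a_{w_{1},w_{2}w_{3}}$), then reassemble a single preimage $w$ with $z=\tdelta(w)$. The only difference is presentational: you make the reduction to the model coalgebra and the grading bookkeeping explicit, where the paper works directly with coefficients on a basis of words.
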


\begin{proof}
Indeed, if $ x = \sum a_{w,w'} w \otimes w' \in Ker(\tdelta \otimes Id_{A} - Id_{A} \otimes \tdelta) $,
\begin{eqnarray*}
\sum_{w_{1} w_{2} = w} a_{w,w'} w_{1} \otimes w_{2} \otimes w' = \sum_{w'_{1} w'_{2} = w'} a_{w,w'} w \otimes w'_{1} \otimes w'_{2} .
\end{eqnarray*}
So $ a_{w_{1} w_{2} , w_{3}} = a_{w_{1} , w_{2} w_{3}} $ for all words $ w_{1} , w_{2} , w_{3} $ different from the unit. We put $ b_{w w'} = a_{w , w'} $. Then
\begin{eqnarray*}
x = \sum b_{w} \left(  \sum_{w_{1} w_{2} = w} w_{1} \otimes w_{2} \right) = \tdelta \left( \sum b_{w} w \right) \in Im(\tdelta).
\end{eqnarray*}
The coassociativity of $ \tdelta $ implies the other inclusion.
\end{proof}

\subsection{From $ \hh^{\mathcal{D}}_{CK} $ to $ \Sh^{\mathcal{D}} $} \label{hsh}

Let $ \varphi : \mathbb{K} \left( \mathbb{T}^{\mathcal{D}}_{\hh_{CK}} \right) \rightarrow \mathbb{K} \left( \mathcal{D} \right) $ be a $ \mathbb{K} $-linear map.

\begin{theo} \label{theoremhsh}
There exists a unique Hopf algebra morphism $ \Phi : \hh^{\mathcal{D}}_{CK} \rightarrow \Sh^{\mathcal{D}} $ such that the following diagram
\begin{eqnarray} \label{diaghsh}
\xymatrix{
\mathbb{K} \left( \mathbb{T}^{\mathcal{D}}_{\hh_{CK}} \right) \ar[r]^{\varphi} \ar@{^{(}->}[d]^{i} & \mathbb{K} \left( \mathcal{D} \right) \\
\hh^{\mathcal{D}}_{CK} \ar[r]^{\Phi} & \Sh^{\mathcal{D}} \ar@{->>}[u]^{\pi}
}
\end{eqnarray}
is commutative.
\end{theo}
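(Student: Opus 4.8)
The plan rests on the fact that the underlying coalgebra of $\Sh^{\mathcal{D}}$ is cofree: its coproduct is the deconcatenation $\Delta_{\mathcal{A}ss}$, its primitive space is the degree-one part $\mathbb{K}(\mathcal{D})$, and $\pi$ is the canonical projection onto it. Two elementary consequences are used throughout. \emph{(a)} An element $w\in\Sh^{\mathcal{D}}$ is determined by the triple $\left(\varepsilon_{\Sh^{\mathcal{D}}}(w),\pi(w),\tdelta(w)\right)$, because under deconcatenation the bidegree-$(1,n-1)$ part of $\tdelta(w)$ is, via the canonical identification $\mathbb{K}(\mathcal{D})\otimes\mathbb{K}(\mathcal{D})^{\otimes(n-1)}\cong\mathbb{K}(\mathcal{D})^{\otimes n}$, exactly the length-$n$ homogeneous component of $w$ for every $n\geq 2$. \emph{(b)} Applying Lemma~\ref{lemcofree} with $A=\Sh^{\mathcal{D}}$: a tensor $z\in Ker(\varepsilon_{\Sh^{\mathcal{D}}})\otimes Ker(\varepsilon_{\Sh^{\mathcal{D}}})$ lies in $Im(\tdelta_{\Sh^{\mathcal{D}}})$ if and only if $(\tdelta_{\Sh^{\mathcal{D}}}\otimes Id-Id\otimes\tdelta_{\Sh^{\mathcal{D}}})(z)=0$. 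We shall also use that $\hh^{\mathcal{D}}_{CK}$ is graded and connected for the number of vertices, so that every reduced coproduct is a finite sum and all inductions below are on $\left|F\right|_{v}$.

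\textbf{Uniqueness.} Let $\Phi$ be as required. As a bialgebra morphism it satisfies $\varepsilon_{\Sh^{\mathcal{D}}}\circ\Phi=\varepsilon_{\hh^{\mathcal{D}}_{CK}}$, and $\pi\circ\Phi$ is forced on every forest $F$: if $F$ is a tree then $\pi\Phi(F)=\varphi(F)$ by the diagram; if $F=T_{1}\hdots T_{k}$ with $k\geq 2$ then $\Phi(F)=\Phi(T_{1})\shuffle\hdots\shuffle\Phi(T_{k})$ is a shuffle of $k\geq 2$ words of length $\geq 1$ (each $\Phi(T_{i})$ having vanishing counit), so its length-one part is zero and $\pi\Phi(F)=0$. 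Since $\Phi$ commutes with $\tdelta$ and every tensor factor of $\tdelta_{\hh^{\mathcal{D}}_{CK}}(F)$ is a forest of vertices degree $<\left|F\right|_{v}$, consequence \emph{(a)} shows $\Phi(F)$ is determined by the values of $\Phi$ on strictly smaller forests; uniqueness follows by induction.

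\textbf{Existence.} I define $\Phi$ on forests by induction on $\left|F\right|_{v}$, proving simultaneously that it is a coalgebra morphism. Put $\Phi(1)=1$. For $F=T_{1}\hdots T_{k}$ with $k\geq 2$ set $\Phi(F)=\Phi(T_{1})\shuffle\hdots\shuffle\Phi(T_{k})$ (unambiguous since $\hh^{\mathcal{D}}_{CK}$ is the free commutative algebra on $\mathbb{T}^{\mathcal{D}}_{\hh_{CK}}$); in particular $\Phi$ is multiplicative by construction. For a tree $T$ of vertices degree $n$, assuming $\Phi$ is already defined and a coalgebra morphism on all forests of degree $<n$, consider $z_{T}=(\Phi\otimes\Phi)\,\tdelta_{\hh^{\mathcal{D}}_{CK}}(T)$, which involves $\Phi$ only on such smaller forests. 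Using the induction hypothesis on the left factors of $\tdelta_{\hh^{\mathcal{D}}_{CK}}(T)$ one computes that $(\tdelta_{\Sh^{\mathcal{D}}}\otimes Id)(z_{T})$ and $(Id\otimes\tdelta_{\Sh^{\mathcal{D}}})(z_{T})$ equal $(\Phi\otimes\Phi\otimes\Phi)$ applied to $(\tdelta_{\hh^{\mathcal{D}}_{CK}}\otimes Id)\tdelta_{\hh^{\mathcal{D}}_{CK}}(T)$ and $(Id\otimes\tdelta_{\hh^{\mathcal{D}}_{CK}})\tdelta_{\hh^{\mathcal{D}}_{CK}}(T)$ respectively, hence are equal by coassociativity of $\tdelta_{\hh^{\mathcal{D}}_{CK}}$; therefore $z_{T}\in Im(\tdelta_{\Sh^{\mathcal{D}}})$ by consequence \emph{(b)}. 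One then defines $\Phi(T)$ to be the unique element of $\Sh^{\mathcal{D}}$ with length-zero part $0$, length-one part $\varphi(T)$, and $\tdelta_{\Sh^{\mathcal{D}}}(\Phi(T))=z_{T}$ (existence since $z_{T}\in Im(\tdelta_{\Sh^{\mathcal{D}}})$, uniqueness since $Ker(\tdelta_{\Sh^{\mathcal{D}}})$ is the sum of the length $\leq 1$ parts). This step — producing, at each stage of the recursion, a reduced coproduct lying in $Im(\tdelta_{\Sh^{\mathcal{D}}})$ — is the main point of the argument and the step I expect to be the real obstacle.

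It remains to close the simultaneous induction by checking that $\Phi$ is a coalgebra morphism on all of $\hh^{\mathcal{D}}_{CK}$: on a tree $T$ of degree $n$ the identity $\tdelta_{\Sh^{\mathcal{D}}}\Phi(T)=(\Phi\otimes\Phi)\tdelta_{\hh^{\mathcal{D}}_{CK}}(T)$ holds by construction (and the $\Delta$-version follows since $\Phi(1)=1$), while on a product $F=T_{1}\hdots T_{k}$ with $k\geq 2$ one combines multiplicativity of $\Delta_{\Sh^{\mathcal{D}}}$, the induction hypothesis on each $T_{i}$ (of degree $<n$), and the already-established multiplicativity of $\Phi$. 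Compatibility of $\Phi$ with units and counits is immediate from the construction, so $\Phi$ is a bialgebra morphism between Hopf algebras, hence a Hopf algebra morphism; and $\pi\circ\Phi\circ i=\varphi$ holds by construction, so the diagram commutes. The remaining verifications beyond the $z_{T}\in Im(\tdelta_{\Sh^{\mathcal{D}}})$ step are routine bookkeeping.
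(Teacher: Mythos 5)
Your proposal is correct and follows essentially the same route as the paper: both define $\Phi$ by induction on the vertices degree, use multiplicativity on products of trees, and for a tree $T$ use coassociativity together with the cofreeness of $\Sh^{\mathcal{D}}$ (Lemma~\ref{lemcofree}) to show that $(\Phi\otimes\Phi)\tdelta_{\hh^{\mathcal{D}}_{CK}}(T)$ lies in $Im(\tdelta_{\Sh^{\mathcal{D}}})$, then pin down $\Phi(T)$ by prescribing its length-one part to be $\varphi(T)$. Your uniqueness argument via the determination of an element by its counit, its image under $\pi$, and its reduced coproduct is just a slightly more explicit phrasing of the paper's induction.
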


\begin{proof} \underline{Existence:} We define $ \Phi $ by induction on the number of vertices. We put $ \Phi(1) = 1 \otimes 1 $ and $ \Phi (\tdun{$a$}) = \varphi (\tdun{$a$}) $ for all $ a \in \mathcal{D} $. Suppose that $ \Phi $ is defined for all forest $ F $ of vertices degree $ < n $ and satisfies the condition $ (\Phi \otimes \Phi) \circ \tdelta_{\hh^{\mathcal{D}}_{CK}} (F) = \tdelta_{\Sh^{\mathcal{D}}} \circ \Phi (F) $. Let $ F \in \hh^{\mathcal{D}}_{CK} $ be a forest of vertices degree $ n $. If $ F = F_{1} F_{2} $, we put $ \Phi(F) = \Phi(F_{1}) \Phi(F_{2}) $. Suppose that $ F $ is a tree. By induction hypothesis, $ (\Phi \otimes \Phi) \circ \tdelta_{\hh^{\mathcal{D}}_{CK}} (F) $ is well defined. Moreover,
\begin{eqnarray*}
& & (\tdelta_{\Sh^{\mathcal{D}}} \otimes Id_{\Sh^{\mathcal{D}}} - Id_{\Sh^{\mathcal{D}}} \otimes \tdelta_{\Sh^{\mathcal{D}}}) \circ (\Phi \otimes \Phi) \circ \tdelta_{\hh^{\mathcal{D}}_{CK}} (F) \\
& = & (\Phi \otimes \Phi \otimes \Phi) \circ (\tdelta_{\hh^{\mathcal{D}}_{CK}} \otimes Id_{\hh^{\mathcal{D}}_{CK}} - Id_{\hh^{\mathcal{D}}_{CK}} \otimes \tdelta_{\hh^{\mathcal{D}}_{CK}}) \circ \tdelta_{\hh^{\mathcal{D}}_{CK}} (F) \\
& = & 0,
\end{eqnarray*}
using induction hypothesis in the first equality and the coassociativity in the second equality.

So $ (\Phi \otimes \Phi) \circ \tdelta_{\hh^{\mathcal{D}}_{CK}} (F) \in Ker(\tdelta_{\Sh^{\mathcal{D}}} \otimes Id_{\Sh^{\mathcal{D}}} - Id_{\Sh^{\mathcal{D}}} \otimes \tdelta_{\Sh^{\mathcal{D}}}) $. As $\Sh^{\mathcal{D}}$ is cofree, with lemma \ref{lemcofree}, $ (\Phi \otimes \Phi) \circ \tdelta_{\hh^{\mathcal{D}}_{CK}} (F) \in Im(\tdelta_{\Sh^{\mathcal{D}}}) $ and there exists $ w \in \Sh^{\mathcal{D}} $ such that $ (\Phi \otimes \Phi) \circ \tdelta_{\hh^{\mathcal{D}}_{CK}} (F) = \tdelta_{\Sh^{\mathcal{D}}} (w) $. We put $ \Phi(F) = w - \pi (w) + \varphi(F) $. Then
\begin{eqnarray*}
\pi \circ \Phi (F) & = & \pi(w) - \pi \circ \pi (w) + \pi \circ \varphi(F) = \varphi(F) ,\\
\tdelta_{\Sh^{\mathcal{D}}} \circ \Phi (F) & = & \tdelta_{\Sh^{\mathcal{D}}} (w) - \tdelta_{\Sh^{\mathcal{D}}}(\pi (w)) + \tdelta_{\Sh^{\mathcal{D}}} (\varphi(F)) \\
& = & \tdelta_{\Sh^{\mathcal{D}}} (w) \\
& = & (\Phi \otimes \Phi) \circ \tdelta_{\hh^{\mathcal{D}}_{CK}} (F) .
\end{eqnarray*}
By induction, the result is established.\\

\underline{Uniqueness:} Let $ \Phi_{1} $ and $ \Phi_{2} $ be two Hopf algebra morphisms such that the diagram (\ref{diaghsh}) is commutative. Let us prove that $ \Phi_{1}(T) = \Phi_{2}(T) $ for all tree $ T \in \hh^{\mathcal{D}}_{CK} $ by induction on the vertices degree of $ T $. If $ n=0 $, $ \Phi_{1}(1) = \Phi_{2}(1) = 1 $. If $ n=1 $, for $ i = 1 ,2 $, $ \tdelta_{\Sh^{\mathcal{D}}} \circ \Phi_{i}(\tdun{$ a $}) = (\Phi_{i} \otimes \Phi_{i}) \circ \tdelta_{\hh^{\mathcal{D}}_{CK}} (\tdun{$ a $}) = 0 $. So $ \Phi_{i}(\tdun{$ a $}) \in Vect(\mathcal{D}) $. As the diagram (\ref{diaghsh}) is commutative, $ \Phi_{1}(\tdun{$ a $}) = \Phi_{2}(\tdun{$ a $}) = \varphi(\tdun{$ a $}) $. Suppose that the result is true in vertices degree $ < n $ and let $ T $ be a tree of vertices degree $ n $. Using induction hypothesis in the second equality,
\begin{eqnarray*}
\tdelta_{\Sh^{\mathcal{D}}} \circ \Phi_{1}(T) & = & (\Phi_{1} \otimes \Phi_{1}) \circ \tdelta_{\hh^{\mathcal{D}}_{CK}} (T) \\
& = & (\Phi_{2} \otimes \Phi_{2}) \circ \tdelta_{\hh^{\mathcal{D}}_{CK}} (T) \\
& = & \tdelta_{\Sh^{\mathcal{D}}} \circ \Phi_{2}(T) .
\end{eqnarray*}
So $ \Phi_{1}(T) - \Phi_{2}(T) \in Vect(\mathbb{T}^{\mathcal{D}}_{\hh_{CK}}) $ and $ \Phi_{1}(T) - \Phi_{2}(T) = \pi (\Phi_{1}(T) - \Phi_{2}(T)) = \varphi(T) - \varphi(T) = 0 $.
\end{proof}
\\

{\bf Notation.} {We consider $ F \in \hh_{CK} $, $ \boldsymbol{e} \models E(F) $ and $ \sigma \in \mathcal{O}(Cont_{\boldsymbol{e}}(F)) $ a linear order on $Cont_{\boldsymbol{e}}(F)$ (see definition \ref{ordrelineaire}). For all $ i \in \{ 1 , \hdots , \left| Cont_{\boldsymbol{e}}(F) \right|_{v} \} $, $ \sigma^{-1}(i) $ is the connected component of $ Part_{\boldsymbol{e}}(F) $ such that her image by $ \sigma $ is equal to $ i $.
}
\vspace{0.5cm}

The following proposition give a combinatorial description of the morphism $ \Phi $ defined in theorem \ref{theoremhsh} :

\begin{prop} \label{propcombhsh}
Let $ T $ be a nonempty tree $ \in \hh^{\mathcal{D}}_{CK} $. Then
\begin{eqnarray} \label{formulecomb}
\Phi(T) = \sum_{\boldsymbol{e} \models E(T)} \left(  \sum_{\sigma \in \mathcal{O}(Cont_{\boldsymbol{e}}(T))} \varphi(\sigma^{-1}( \left| Cont_{\boldsymbol{e}}(F)\right|_{v} )) \hdots \varphi(\sigma^{-1}(1 )) \right)  .
\end{eqnarray}
\end{prop}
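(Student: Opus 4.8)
The plan is to verify that the right-hand side of (\ref{formulecomb}), which I will call $\Psi(T)$, satisfies the two defining properties of $\Phi$ given by Theorem \ref{theoremhsh}: namely $\pi \circ \Psi = \varphi$ on trees and $\tdelta_{\Sh^{\mathcal{D}}} \circ \Psi = (\Psi \otimes \Psi) \circ \tdelta_{\hh^{\mathcal{D}}_{CK}}$, extended multiplicatively to forests. By the uniqueness part of Theorem \ref{theoremhsh}, this forces $\Psi = \Phi$. First I would extend $\Psi$ to all forests by $\Psi(F_1 F_2) = \Psi(F_1) \shuffle \Psi(F_2)$, so that only the behaviour on trees needs checking.

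For the first property, observe that in a tree $T$, the empty contraction $\boldsymbol{e} = \emptyset$ gives $Cont_{\emptyset}(T) = T$ itself with $|T|_v$ vertices, and every other $\boldsymbol{e} \models E(T)$ with $\boldsymbol{e} \neq \emptyset$ yields a $Cont_{\boldsymbol{e}}(T)$ with strictly fewer vertices, hence produces a $\D$-word of length $< |T|_v$; the total contraction $\boldsymbol{e} = E(T)$ gives the single vertex $\tun$, whose unique linear order contributes $\varphi(T)$ (the whole tree, as a connected component). So $\pi \circ \Psi(T)$, which picks out the degree-one component, equals $\varphi(T)$ as required, since all length-one words in $\Psi(T)$ arise precisely from $\boldsymbol{e} = E(T)$.

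For the coproduct property, the main work is to expand $\tdelta_{\Sh^{\mathcal{D}}} \circ \Psi(T)$ using the deconcatenation coproduct and match it with $(\Psi \otimes \Psi)\circ \tdelta_{\hh^{\mathcal{D}}_{CK}}(T)$. The key combinatorial identity links the two kinds of cuts: a pair $(\boldsymbol{e}, \sigma)$ with $\boldsymbol{e} \models E(T)$ and $\sigma \in \mathcal{O}(Cont_{\boldsymbol{e}}(T))$, together with a splitting point $0 \le j \le |Cont_{\boldsymbol{e}}(T)|_v$ of the reversed word $\varphi(\sigma^{-1}(|Cont_{\boldsymbol{e}}(T)|_v))\cdots\varphi(\sigma^{-1}(1))$, corresponds bijectively to a triple consisting of an admissible cut $\boldsymbol{v} \mmodels V(T)$ (or empty/total) splitting $T$ into $Lea_{\boldsymbol{v}}(T)$ and $Roo_{\boldsymbol{v}}(T)$, a contraction $\boldsymbol{e}$ of each piece, and linear orders on the respective contracted forests. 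Concretely, the vertices $\sigma^{-1}(j+1), \dots, \sigma^{-1}(|Cont_{\boldsymbol{e}}(T)|_v)$ (those landing in the left tensor factor of the reversed deconcatenation) are the connected components of $Part_{\boldsymbol{e}}(T)$ lying "above" an admissible cut, and because $\sigma$ is a linear order (respecting $\twoheadrightarrow$), this set of components is exactly $Lea_{\boldsymbol{v}}(T)$ for a well-defined admissible cut $\boldsymbol{v}$. One then checks that the restriction of $\sigma$ to the top part is a linear order on $Cont_{\boldsymbol{e}\cap E(Lea_{\boldsymbol{v}}(T))}(Lea_{\boldsymbol{v}}(T))$ and similarly for the bottom, and conversely any such data glues back. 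Summing $\varphi$-evaluations along this bijection gives exactly $(\Psi \otimes \Psi)\circ \tdelta_{\hh^{\mathcal{D}}_{CK}}(T)$; the multiplicativity of $\Psi$ and the compatibility of $\shuffle$ with deconcatenation in $\Sh^{\mathcal{D}}$ handle the forest case and the decomposition of $Part_{\boldsymbol{e}}(T)$ into connected components.

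The hard part will be making the bijection in the previous paragraph precise, in particular verifying that the linear-order condition on $\sigma$ is exactly what guarantees that the "top" components of $Part_{\boldsymbol{e}}(T)$ form an admissible (totally disconnected) cut of $T$ rather than an arbitrary subset of vertices — this is where Definition \ref{ordrelineaire} interacts nontrivially with the structure of $Cont_{\boldsymbol{e}}(T)$ — and in checking that the word-reversal in (\ref{formulecomb}) is compatible with the left-right convention of $\tdelta_{\Sh^{\mathcal{D}}}$. Once the bijection is established, the rest is bookkeeping, and uniqueness in Theorem \ref{theoremhsh} closes the argument.
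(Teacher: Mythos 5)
Your proposal is correct and takes essentially the same route as the paper: the paper likewise pins $\Phi$ down by induction on the vertex degree using $\tdelta_{\Sh^{\mathcal{D}}}\circ\Phi=(\Phi\otimes\Phi)\circ\tdelta_{\hh^{\mathcal{D}}_{CK}}$ together with $\pi\circ\Phi\circ i=\varphi$, which is exactly the uniqueness mechanism of Theorem \ref{theoremhsh} that you invoke. Its key lemma is precisely your bijection between a linear order on $Cont_{\boldsymbol{e}}(T)$ equipped with a splitting point and an admissible cut of $T$ equipped with contractions and linear orders of the two pieces.
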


\begin{proof}
We use the following lemma:

\begin{lemma}
Let $ T $ be a rooted tree of vertices degree $ n $. We define :
\begin{eqnarray*}
\mathbb{E}(T) & = & \{ (\boldsymbol{v} , \sigma_{1} , \sigma_{2}) \:\mid \: \boldsymbol{v} \mmodels V(T) , \sigma_{1} \in \mathcal{O}(Lea_{\boldsymbol{v}}(T)) , \sigma_{2} \in \mathcal{O}(Roo_{\boldsymbol{v}}(T)) \} ,\\
\mathbb{F}(T) & = & \{ (\sigma , p) \:\mid \: \sigma \in \mathcal{O}(T) , p \in \{1, \hdots , n-1 \} \} .
\end{eqnarray*}
Then $ \mathbb{E}(T) $ and $ \mathbb{F}(T) $ are in bijection.
\end{lemma}

\begin{proof}
We define two maps $f$ and $g$.

Let $f$ be the map defined by
\begin{eqnarray*}
f: \left\lbrace 
\begin{array}{rcl}
\mathbb{E}(T) & \rightarrow & \mathbb{F}(T) \\
\left( \boldsymbol{v} , \sigma_{1} , \sigma_{2} \right)  & \mapsto & \left( \sigma , \left| Roo_{\boldsymbol{v}}(T) \right|_{v} \right) 
\end{array}
\right.
\end{eqnarray*}
where $\sigma : V(T) \rightarrow \{1, \hdots ,n \}$ is defined by $\sigma(v) = \sigma_{2}(v)$ for all $v \in V(Roo_{\boldsymbol{v}}(T))$ and $\sigma(v) = \sigma_{1}(v) + \left| Roo_{\boldsymbol{v}}(T) \right|_{v}$ for all $v \in V(Lea_{\boldsymbol{v}}(T))$. By definition, $\sigma \in \mathcal{O}(T)$.

Let $g$ be the map defined by
\begin{eqnarray*}
g: \left\lbrace
\begin{array}{rcl}
\mathbb{F}(T) & \rightarrow & \mathbb{E}(T) \\
\left( \sigma , p \right) & \mapsto & \left( \boldsymbol{v} , \sigma_{1} , \sigma_{2} \right) 
\end{array}
\right. 
\end{eqnarray*}
where
\begin{itemize}
\item $\sigma_{1} : V(Lea_{\boldsymbol{v}}(T)) \rightarrow \{ 1, \hdots , \left| Lea_{\boldsymbol{v}}(T) \right|_{v} \}$ is defined by $\sigma_{1}(v) = \sigma(v) - \left| Roo_{\boldsymbol{v}}(T) \right|_{v} $ for all $v \in V(Lea_{\boldsymbol{v}}(T))$. Then $\sigma_{1} \in \mathcal{O}(Lea_{\boldsymbol{v}}(T))$
\item $\sigma_{2} : V(Roo_{\boldsymbol{v}}(T)) \rightarrow \{ 1, \hdots , \left| Roo_{\boldsymbol{v}}(T) \right|_{v} \}$ is defined by $\sigma_{2}(v) = \sigma(v) $ for all $v \in V(Roo_{\boldsymbol{v}}(T))$. Then $\sigma_{2} \in \mathcal{O}(Roo_{\boldsymbol{v}}(T))$
\item $ \boldsymbol{v} $ is the subset $ \left\lbrace v \in \sigma^{-1}(\{k, \hdots , n \}) \:\mid \: \mbox{if } w \in \sigma^{-1}(\{k, \hdots , n \}) \mbox{ and } v \twoheadrightarrow w \mbox{ then } v = w \right\rbrace $ of $V(T)$. We have $\boldsymbol{v} \mmodels V(T)$.
\end{itemize}
So $f$ and $g$ are well defined. Then we show easily that $ f \circ g = Id_{\mathbb{F}(T)} $ and $ g \circ f = Id_{\mathbb{E}(T)} $.
\end{proof}
\\

Let us show formula (\ref{formulecomb}) by induction on the number $ n $ of vertices. If $ n = 1 $, $ T = \tdun{$ a $} $ with $ a \in \mathcal{D} $. Then $ \Phi(\tdun{$ a $}) = \varphi (\tdun{$ a $}) $ and formula (\ref{formulecomb}) is true. If $ n \geq 2 $,
\begin{eqnarray*}
& & \tdelta_{\Sh^{\mathcal{D}}} \circ \Phi (T)\\
& = & (\Phi \otimes \Phi) \circ \tdelta_{\hh^{\mathcal{D}}_{CK}} (T) \\
& = & \sum_{\boldsymbol{v} \mmodels V(T)} \Phi(Lea_{\boldsymbol{v}}(T)) \otimes \Phi(Roo_{\boldsymbol{v}}(T)) \\
& = & \sum_{\boldsymbol{v} \mmodels V(T)} \left( \sum_{\boldsymbol{e} \models E(Lea_{\boldsymbol{v}}(T))} \left( \sum_{\sigma_{1} \in \mathcal{O}(Cont_{\boldsymbol{e}}(Lea_{\boldsymbol{v}}(T)))} \varphi(\sigma_{1}^{-1}( \left| Cont_{\boldsymbol{e}}(Lea_{\boldsymbol{v}}(T)) \right|_{v} )) \hdots \varphi(\sigma_{1}^{-1}(1)) \right) \right) \\
& & \otimes \left( \sum_{\boldsymbol{f} \models E(Roo_{\boldsymbol{v}}(T))} \left( \sum_{\sigma_{2} \in \mathcal{O}(Cont_{\boldsymbol{f}}(Roo_{\boldsymbol{v}}(T)))} \varphi(\sigma_{2}^{-1}( \left| Cont_{\boldsymbol{f}}(Roo_{\boldsymbol{v}}(T)) \right|_{v} )) \hdots \varphi(\sigma_{2}^{-1}(1)) \right) \right) \\
& = & \sum_{\boldsymbol{e} \models E(T)} \sum_{(\boldsymbol{v}, \sigma_{1} , \sigma_{2} ) \in \mathbb{E}(Cont_{\boldsymbol{e}}(T))} \varphi(\sigma^{-1}_{1}( \left| Lea_{\boldsymbol{v}}(Cont_{\boldsymbol{e}}(T)) \right|_{v} )) \hdots \varphi(\sigma^{-1}_{1}(1 )) \\
& & \hspace{5cm} \otimes \varphi(\sigma^{-1}_{2}( \left| Roo_{\boldsymbol{v}}(Cont_{\boldsymbol{e}}(T)) \right|_{v} )) \hdots \varphi(\sigma^{-1}_{2}(1)) \\
& = & \sum_{\boldsymbol{e} \models E(T)} \sum_{(\sigma , p) \in \mathbb{F}(Cont_{\boldsymbol{e}}(T))} \varphi(\sigma^{-1}(\left| Cont_{\boldsymbol{e}}(T) \right|_{v})) \hdots \varphi(\sigma^{-1}(p+1)) \otimes \varphi(\sigma^{-1}(p)) \hdots \varphi(\sigma^{-1}( 1 )) .
\end{eqnarray*}
So
\begin{eqnarray*}
\Phi(T) = \sum_{\boldsymbol{e} \models E(T)} \left(  \sum_{\sigma \in \mathcal{O}(Cont_{\boldsymbol{e}}(T))} \varphi(\sigma^{-1}(\left| Cont_{\boldsymbol{e}}(F) \right|_{v} )) \hdots \varphi(\sigma^{-1}( 1)) \right)
\end{eqnarray*}
and by induction, we have the result.
\end{proof}
\\

{\bf Examples.} {\begin{itemize}
\item In vertices degree $1$, $\Phi(\tdun{$ a $}) = \varphi (\tdun{$ a $})$.
\item In vertices degree $2$,
\begin{eqnarray*}
\Phi(\tddeux{$ a $}{$ b $}) & = & \varphi (\tdun{$ b $}) \varphi (\tdun{$ a $}) + \varphi(\tddeux{$ a $}{$ b $}) \\
\Phi(\tdun{$ a $} \tdun{$ b $}) & = & \varphi (\tdun{$ a $}) \varphi (\tdun{$ b $}) + \varphi (\tdun{$ b $}) \varphi (\tdun{$ a $}) .
\end{eqnarray*}
\item In vertices degree $3$,
\begin{eqnarray*}
\Phi(\tdtroisun{$a$}{$c$}{$b$}) & = & \varphi (\tdun{$ b $}) \varphi (\tdun{$ c $}) \varphi (\tdun{$ a $}) + \varphi (\tdun{$ c $}) \varphi (\tdun{$ b $}) \varphi (\tdun{$ a $}) + \varphi (\tdun{$ b $}) \varphi(\tddeux{$ a $}{$ c $}) + \varphi (\tdun{$ c $}) \varphi(\tddeux{$ a $}{$ b $}) + \varphi(\tdtroisun{$a$}{$c$}{$b$}) \\
\Phi(\tdtroisdeux{$a$}{$b$}{$c$}) & = & \varphi (\tdun{$ c $}) \varphi (\tdun{$ b $}) \varphi (\tdun{$ a $}) + \varphi (\tdun{$ c $}) \varphi(\tddeux{$ a $}{$ b $}) + \varphi(\tddeux{$ b $}{$ c $}) \varphi (\tdun{$ a $}) + \varphi (\tdtroisdeux{$a$}{$b$}{$c$}) \\
\Phi(\tdquatredeux{$a$}{$d$}{$b$}{$c$}) & = & \varphi (\tdun{$ c $}) \varphi (\tdun{$ b $}) \varphi (\tdun{$ d $}) \varphi (\tdun{$ a $}) + \varphi (\tdun{$ c $}) \varphi (\tdun{$ d $}) \varphi (\tdun{$ b $}) \varphi (\tdun{$ a $}) + \varphi (\tdun{$ d $}) \varphi (\tdun{$ c $}) \varphi (\tdun{$ b $}) \varphi (\tdun{$ a $}) \\
& & + \varphi (\tdun{$ c $}) \varphi (\tdun{$ b $}) \varphi (\tddeux{$ a $}{$d$}) + \varphi (\tdun{$ c $}) \varphi (\tdun{$ d $}) \varphi (\tddeux{$ a $}{$b$}) + \varphi (\tdun{$ d $}) \varphi (\tdun{$ c $}) \varphi (\tddeux{$ a $}{$ b $}) + \varphi (\tddeux{$ b $}{$ c $}) \varphi (\tdun{$ d $}) \varphi (\tdun{$ a $}) \\
& & + \varphi (\tdun{$ d $}) \varphi (\tddeux{$ b $}{$ c $}) \varphi (\tdun{$ a $}) + \varphi (\tddeux{$ b $}{$ c $}) \varphi (\tddeux{$ a $}{$ d $}) + \varphi (\tdun{$ c $}) \varphi (\tdtroisun{$ a $}{$ d $}{$ b $}) + \varphi (\tdun{$ d $}) \varphi (\tdtroisdeux{$ a $}{$ b $}{$ c $}) + \varphi(\tdquatredeux{$a$}{$d$}{$b$}{$c$}) .
\end{eqnarray*}
\end{itemize}
}
\vspace{0.5cm}

{\bf Particular case.} {If $ \varphi(\tdun{$ a $}) = a $ for all $ a \in \mathcal{D} $ and $ \varphi(T) = 0 $ if $ \left| T \right|_{v} \geq 1 $, then this is the particular case of arborification (see \cite{Ecalle04}). For example :
\begin{eqnarray*}
\begin{array}{rcl|rcl|rcl}
\Phi(\tdun{$ a $}) & = & a & \Phi(\tddeux{$ a $}{$ b $}) & = & ba & \Phi(\tdun{$a$} \tdun{$b$}) & = & ab + ba\\
\Phi(\tdtroisun{$a$}{$c$}{$b$}) & = & bca + cba & \Phi(\tdtroisdeux{$a$}{$b$}{$c$}) & = & cba & \Phi(\tdquatredeux{$a$}{$d$}{$b$}{$c$}) & = & cbda + cdba + dcba.
\end{array}
\end{eqnarray*}
}

\subsection{From $ \hh^{\mathcal{D}}_{CK} $ to $ \Csh^{\mathcal{D}} $} \label{hcsh}

Let $ \varphi : \mathbb{K} \left( \mathbb{T}^{\mathcal{D}}_{\hh_{CK}} \right) \rightarrow \mathbb{K} \left( \mathcal{D} \right) $ be a $ \mathbb{K} $-linear map. We suppose that $ \mathcal{D} $ is equipped with an associative and commutative product $ \left[ \cdot , \cdot \right] : (a,b) \in \D^{2} \mapsto \left[ a b\right] \in \D $.

\begin{theo} \label{theohcsh}
There exists a unique Hopf algebra morphism $ \Phi : \hh^{\mathcal{D}}_{CK} \rightarrow \Csh^{\mathcal{D}} $ such that the following diagram
\begin{eqnarray} \label{diaghcsh}
\xymatrix{
\mathbb{K} \left( \mathbb{T}^{\mathcal{D}}_{\hh_{CK}} \right) \ar[r]^{\varphi} \ar@{^{(}->}[d]^{i} & \mathbb{K} \left( \mathcal{D} \right) \\
\hh^{\mathcal{D}}_{CK} \ar[r]^{\Phi} & \Csh^{\mathcal{D}} \ar@{->>}[u]^{\pi}
}
\end{eqnarray}
is commutative.
\end{theo}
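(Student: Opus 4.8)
The plan is to mimic the proof of Theorem~\ref{theoremhsh} almost verbatim, replacing $\Sh^{\mathcal{D}}$ by $\Csh^{\mathcal{D}}$ throughout. The crucial structural fact that makes that proof work is that $\Sh^{\mathcal{D}}$ is cofree as a coalgebra, so that Lemma~\ref{lemcofree} applies. Thus the first thing I would do is observe that $\Csh^{\mathcal{D}}$ is also cofree: indeed, as a coalgebra $(\Csh^{\mathcal{D}},\Delta_{\Csh^{\mathcal{D}}})$ has exactly the same underlying space (the $\mathcal{D}$-words) and exactly the same coproduct (deconcatenation) as $\Sh^{\mathcal{D}}$ --- the excerpt explicitly states ``This is the same coproduct as for $\Sh^{\mathcal{D}}$''. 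Hence $\Csh^{\mathcal{D}}\cong T^{c}(\mathbb{K}(\mathcal{D}))$ as a coalgebra, its primitives are $\mathbb{K}(\mathcal{D})$, and Lemma~\ref{lemcofree} gives $\mathrm{Ker}(\tilde\Delta_{\Csh^{\mathcal{D}}}\otimes \mathrm{Id}-\mathrm{Id}\otimes\tilde\Delta_{\Csh^{\mathcal{D}}})=\mathrm{Im}(\tilde\Delta_{\Csh^{\mathcal{D}}})$.

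For \textbf{existence}, I would construct $\Phi$ by induction on the number of vertices exactly as before: set $\Phi(1)=1$, $\Phi(\tdun{$a$})=\varphi(\tdun{$a$})$ for $a\in\mathcal{D}$, extend multiplicatively on forests that are products of at least two trees, and for a tree $F$ of vertices degree $n\geq 2$ use that, by the induction hypothesis, $(\Phi\otimes\Phi)\circ\tilde\Delta_{\hh^{\mathcal{D}}_{CK}}(F)$ is defined and lies in the kernel of $\tilde\Delta_{\Csh^{\mathcal{D}}}\otimes\mathrm{Id}-\mathrm{Id}\otimes\tilde\Delta_{\Csh^{\mathcal{D}}}$ (the computation is word-for-word the one in Theorem~\ref{theoremhsh}, only the coalgebra has changed, and it uses only coassociativity of $\tilde\Delta_{\hh^{\mathcal{D}}_{CK}}$ and the induction hypothesis). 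By cofreeness of $\Csh^{\mathcal{D}}$ and Lemma~\ref{lemcofree}, pick $w\in\Csh^{\mathcal{D}}$ with $(\Phi\otimes\Phi)\circ\tilde\Delta_{\hh^{\mathcal{D}}_{CK}}(F)=\tilde\Delta_{\Csh^{\mathcal{D}}}(w)$ and set $\Phi(F)=w-\pi(w)+\varphi(F)$; then $\pi\circ\Phi(F)=\varphi(F)$ and $\tilde\Delta_{\Csh^{\mathcal{D}}}\circ\Phi(F)=(\Phi\otimes\Phi)\circ\tilde\Delta_{\hh^{\mathcal{D}}_{CK}}(F)$, so the inductive hypotheses are maintained. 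For \textbf{uniqueness}, the same induction as in Theorem~\ref{theoremhsh} works: two such morphisms $\Phi_1,\Phi_2$ agree in degrees $0$ and $1$ by commutativity of the diagram, and in degree $n$ one has $\tilde\Delta_{\Csh^{\mathcal{D}}}(\Phi_1(T)-\Phi_2(T))=0$, so $\Phi_1(T)-\Phi_2(T)$ is primitive, hence in $\mathbb{K}(\mathcal{D})$, hence equals $\pi(\Phi_1(T)-\Phi_2(T))=\varphi(T)-\varphi(T)=0$.

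The one point that needs a separate remark is that $\Phi$, built so far only as a coalgebra morphism compatible with $\pi$, must also be compatible with the algebra structures, i.e. that it is genuinely a Hopf algebra morphism --- but this is immediate from the construction since $\Phi$ was defined to be multiplicative on products of trees (the product of $\Csh^{\mathcal{D}}$ being the quasi-shuffle $\shuffle\hspace{-.43cm}-$), and one checks as usual that a map which is multiplicative on generators and a coalgebra morphism is a bialgebra morphism, hence a Hopf algebra morphism by connectedness.

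The proof is essentially formal; the only thing a reader could worry about is whether cofreeness of $\Csh^{\mathcal{D}}$ really holds, since the \emph{algebra} structures of $\Sh^{\mathcal{D}}$ and $\Csh^{\mathcal{D}}$ differ. I expect this to be the main (minor) obstacle to state cleanly: one must emphasize that cofreeness is a property of the coalgebra alone, that the coproducts of $\Sh^{\mathcal{D}}$ and $\Csh^{\mathcal{D}}$ coincide, and therefore Lemma~\ref{lemcofree} transfers verbatim. Once that is granted, every other step is a routine transcription of the proof of Theorem~\ref{theoremhsh} with $\Sh^{\mathcal{D}}$ replaced by $\Csh^{\mathcal{D}}$.
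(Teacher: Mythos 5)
Your proposal is correct and is exactly the paper's argument: the paper's entire proof of Theorem \ref{theohcsh} is the remark that $\Csh^{\mathcal{D}}$ is cofree (its coproduct being deconcatenation, identical to that of $\Sh^{\mathcal{D}}$), after which the proof of Theorem \ref{theoremhsh} carries over verbatim. Your additional care in noting that cofreeness is a property of the coalgebra alone, independent of the differing products, is a sensible clarification but not a departure from the paper's route.
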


\begin{proof}
Noting that $ \Csh^{\mathcal{D}} $ is cofree, this is the same proof as for theorem \ref{theoremhsh}.
\end{proof}
\\

{\bf Notation.} {Let $ F \in \hh_{CK} $ be a nonempty rooted forest, $ \boldsymbol{e} \models E(F) $ and $\sigma \in \mathcal{O}_{p}(Cont_{\boldsymbol{e}}(F))$ a lineair preorder on $Cont_{\boldsymbol{e}}(F)$ (see definition \ref{ordreprelineaire}), $ \sigma : V(Cont_{\boldsymbol{e}}(F)) \rightarrow \{ 1,\hdots,q \} $ surjective. For all $ i \in \{1, \hdots , q \} $, $ \sigma^{-1}(i) $ is the forest $T_{1} \hdots T_{n}$ of all connected components $T_{k}$ of $Part_{\boldsymbol{e}}(F)$ such that $\sigma(T_{k}) = i $ for all $k \in \{1, \hdots , n \} $. In this case, $ \varphi(\sigma^{-1}(i)) $ is the element $ \left[  \varphi(T_{1}) \hdots \varphi(T_{n}) \right]^{(n)} $.
}
\vspace{0.5cm}

Now, we give a combinatorial description of the morphism $ \Phi $ defined in theorem \ref{theohcsh}:

\begin{prop}
Let $ T $ be a nonempty tree $ \in \hh_{CK}^{\mathcal{D}} $. Then
\begin{eqnarray} \label{formulecombhcsh}
\Phi(T) = \sum_{\boldsymbol{e} \models E(T)} \Bigg(  \sum_{\substack{\sigma \in \mathcal{O}_{p}(Cont_{\boldsymbol{e}}(T)) \\ Im(\sigma) = \{ 1, \hdots , q\}}} \varphi(\sigma^{-1}( q)) \hdots \varphi(\sigma^{-1}( 1)) \Bigg)  .
\end{eqnarray}
\end{prop}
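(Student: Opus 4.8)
\textbf{Proof plan for the combinatorial description of $\Phi : \hh^{\mathcal{D}}_{CK} \rightarrow \Csh^{\mathcal{D}}$.}
The plan is to mimic exactly the proof of Proposition \ref{propcombhsh}, replacing linear orders by linear preorders throughout and the deconcatenation coproduct on $\Sh^{\mathcal{D}}$ by the same deconcatenation on $\Csh^{\mathcal{D}}$ (they coincide, as noted in the text). First I would denote by $\Psi(T)$ the right-hand side of formula (\ref{formulecombhcsh}) and prove $\Phi(T) = \Psi(T)$ for trees by induction on the number of vertices $n$. The base case $n=1$ gives $T = \tdun{$a$}$, $\Phi(\tdun{$a$}) = \varphi(\tdun{$a$})$, and the only contraction is the empty one with the unique preorder $\sigma$ of image $\{1\}$, so $\Psi(\tdun{$a$}) = \varphi(\tdun{$a$})$ as well. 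For the inductive step, by the uniqueness argument already established in the proof of Theorem \ref{theohcsh} (via Lemma \ref{lemcofree}), it suffices to check that $\tdelta_{\Csh^{\mathcal{D}}} \circ \Psi(T) = (\Phi \otimes \Phi) \circ \tdelta_{\hh^{\mathcal{D}}_{CK}}(T)$, because both $\Phi(T)$ and $\Psi(T)$ then differ by an element of $Vect(\mathcal{D})$ whose image under $\pi$ is $\varphi(T)$, forcing them to be equal.

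The core of the computation is a combinatorial bijection analogous to the lemma inside the proof of Proposition \ref{propcombhsh}. For a rooted tree $S$ of vertices degree $m$, I would set
\begin{eqnarray*}
\mathbb{E}_{p}(S) & = & \{ (\boldsymbol{v}, \sigma_1, \sigma_2) \mid \boldsymbol{v} \mmodels V(S), \ \sigma_1 \in \mathcal{O}_p(Lea_{\boldsymbol{v}}(S)), \ \sigma_2 \in \mathcal{O}_p(Roo_{\boldsymbol{v}}(S)) \} ,\\
\mathbb{F}_{p}(S) & = & \{ (\sigma, p) \mid \sigma \in \mathcal{O}_p(S), \ 1 \leq p < \max(\sigma) \} ,
\end{eqnarray*}
and exhibit a bijection between them: given $(\boldsymbol{v}, \sigma_1, \sigma_2)$ with $\sigma_2$ of image $\{1,\hdots,r\}$, build $\sigma$ on $V(S)$ by $\sigma|_{Roo_{\boldsymbol{v}}(S)} = \sigma_2$ and $\sigma|_{Lea_{\boldsymbol{v}}(S)} = \sigma_1 + r$, and take $p = r$; conversely, from $(\sigma, p)$ split off the vertices with $\sigma$-value $> p$ into $Lea_{\boldsymbol{v}}$, shift preorders, and recover $\boldsymbol{v}$ as the minimal totally disconnected subset dominating $\sigma^{-1}(\{p+1,\hdots,\max(\sigma)\})$ in the obvious way. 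The point where one must be careful is that the blocks of a linear preorder can contain several connected components: when $\sigma$ is cut at level $p$, each of the two induced preorders $\sigma_1, \sigma_2$ must be \emph{packed} so that its image is an initial segment $\{1,\hdots,\cdot\}$, and this packing is precisely what makes $\varphi$ of a block compatible with the bracket $\left[ \cdot , \cdot \right]^{(n)}$ appearing in $\Csh^{\mathcal{D}}$. One also uses that contractions of $T$ are in bijection with pairs $(\boldsymbol{e}', \boldsymbol{e}'')$ refining an admissible cut of $Cont_{\boldsymbol{e}}(T)$, exactly as in the compatibility of $\Delta_{\hh_{CK}}$ with contraction implicit in the earlier proof.

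Putting these together, I would expand $\tdelta_{\Csh^{\mathcal{D}}} \circ \Psi(T)$ using the deconcatenation coproduct, which cuts each word $\varphi(\sigma^{-1}(q)) \hdots \varphi(\sigma^{-1}(1))$ between positions $p$ and $p+1$, reorganize the double sum over $\boldsymbol{e} \models E(T)$ and over $(\sigma, p) \in \mathbb{F}_p(Cont_{\boldsymbol{e}}(T))$, apply the bijection $\mathbb{F}_p(Cont_{\boldsymbol{e}}(T)) \cong \mathbb{E}_p(Cont_{\boldsymbol{e}}(T))$, and recognize the result as $\sum_{\boldsymbol{v} \mmodels V(T)} \Psi(Lea_{\boldsymbol{v}}(T)) \otimes \Psi(Roo_{\boldsymbol{v}}(T))$, which by the induction hypothesis equals $(\Phi \otimes \Phi) \circ \tdelta_{\hh^{\mathcal{D}}_{CK}}(T)$. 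The extension to forests is immediate since both $\Phi$ and $\Psi$ are multiplicative (concatenation on the target, disjoint union on the source). The main obstacle I anticipate is bookkeeping the packing of preorders across the cut: ensuring that after cutting a linear preorder at level $p$, the two pieces, once their value-sets are packed down to $\{1,\hdots,\cdot\}$, still range over \emph{all} pairs of linear preorders on $Lea_{\boldsymbol{v}}$ and $Roo_{\boldsymbol{v}}$ without overcounting — that is, that the bijection $\mathbb{E}_p \cong \mathbb{F}_p$ genuinely holds at the level of preorders, not just orders. Once that is checked, the rest is a routine reindexing identical in structure to Proposition \ref{propcombhsh}.
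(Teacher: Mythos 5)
Your proposal is correct and follows essentially the same route as the paper, which itself proves this proposition by "resuming" the proof of Proposition \ref{propcombhsh} with linear preorders in place of linear orders and observing that, since $Lea_{\boldsymbol{v}}(T)$ is a forest, the quasi-shuffle product on the left tensor factor produces the bracket contractions $\left[\cdot,\cdot\right]$. Your version is in fact more detailed than the paper's one-line argument: the adapted bijection $\mathbb{E}_p \cong \mathbb{F}_p$ for preorders and the packing across the cut that you single out are exactly the points that need checking, and they go through because the two value-sets $\{1,\hdots,p\}$ and $\{p+1,\hdots,q\}$ are intervals.
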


\begin{proof}
It suffices to resume the proof of proposition \ref{propcombhsh}. Note that, if $T$ is a rooted tree and $\boldsymbol{v} \models V(T)$, $Roo_{\boldsymbol{v}}(T)$ is a tree and $Lea_{\boldsymbol{v}}(T)$ is a forest. So there is possibly contractions for the product $\left[ \cdot , \cdot \right]$ to the left of $(\Phi \otimes \Phi) \circ \tdelta_{\mathbf{H}_{CK}^{\mathcal{D}}} (T)$. We deduce formula (\ref{formulecombhcsh}).
\end{proof}
\\

{\bf Examples.} {\begin{itemize}
\item In vertices degree $1$, $\Phi(\tdun{$ a $}) = \varphi (\tdun{$ a $})$.
\item In vertices degree $2$,
\begin{eqnarray*}
\Phi(\tdun{$ a $} \tdun{$ b $}) & = & \varphi (\tdun{$ a $}) \varphi (\tdun{$ b $}) + \varphi (\tdun{$ b $}) \varphi (\tdun{$ a $}) + \left[ \varphi(\tdun{$ a $}) \varphi(\tdun{$ b $}) \right] \\
\Phi(\tddeux{$ a $}{$ b $}) & = & \varphi (\tdun{$ b $}) \varphi (\tdun{$ a $}) + \varphi(\tddeux{$ a $}{$ b $}) .
\end{eqnarray*}
\item In vertices degree $3$,
\begin{eqnarray*}
\Phi(\tdtroisun{$a$}{$c$}{$b$}) & = & \varphi (\tdun{$ b $}) \varphi (\tdun{$ c $}) \varphi (\tdun{$ a $}) + \varphi (\tdun{$ c $}) \varphi (\tdun{$ b $}) \varphi (\tdun{$ a $}) + \left[ \varphi (\tdun{$ c $}) \varphi (\tdun{$ b $}) \right] \varphi (\tdun{$ a $}) + \varphi (\tdun{$ b $}) \varphi(\tddeux{$ a $}{$ c $}) \\
& & + \varphi (\tdun{$ c $}) \varphi(\tddeux{$ a $}{$ b $}) + \varphi(\tdtroisun{$a$}{$c$}{$b$}) \\
\Phi(\tdtroisdeux{$a$}{$b$}{$c$}) & = & \varphi (\tdun{$ c $}) \varphi (\tdun{$ b $}) \varphi (\tdun{$ a $}) + \varphi (\tdun{$ c $}) \varphi(\tddeux{$ a $}{$ b $}) + \varphi(\tddeux{$ b $}{$ c $}) \varphi (\tdun{$ a $}) + \varphi (\tdtroisdeux{$a$}{$b$}{$c$}) \\
\Phi(\tdquatredeux{$a$}{$d$}{$b$}{$c$}) & = & \varphi (\tdun{$ c $}) \varphi (\tdun{$ b $}) \varphi (\tdun{$ d $}) \varphi (\tdun{$ a $}) + \varphi (\tdun{$ c $}) \left[ \varphi (\tdun{$ b $}) \varphi (\tdun{$ d $}) \right] \varphi (\tdun{$ a $}) + \varphi (\tdun{$ c $}) \varphi (\tdun{$ d $}) \varphi (\tdun{$ b $}) \varphi (\tdun{$ a $}) \\
& & + \left[ \varphi (\tdun{$ c $}) \varphi (\tdun{$ d $}) \right] \varphi (\tdun{$ b $}) \varphi (\tdun{$ a $}) + \varphi (\tdun{$ d $}) \varphi (\tdun{$ c $}) \varphi (\tdun{$ b $}) \varphi (\tdun{$ a $}) + \varphi (\tdun{$ c $}) \varphi (\tdun{$ b $}) \varphi (\tddeux{$ a $}{$d$}) \\
& & + \varphi (\tdun{$ c $}) \varphi (\tdun{$ d $}) \varphi (\tddeux{$ a $}{$b$}) + \varphi (\tdun{$ d $}) \varphi (\tdun{$ c $}) \varphi (\tddeux{$ a $}{$ b $}) + \varphi (\tddeux{$ b $}{$ c $}) \varphi (\tdun{$ d $}) \varphi (\tdun{$ a $}) \\
& & + \left[ \varphi (\tddeux{$ b $}{$ c $}) \varphi (\tdun{$ d $}) \right]  \varphi (\tdun{$ a $}) + \varphi (\tdun{$ d $}) \varphi (\tddeux{$ b $}{$ c $}) \varphi (\tdun{$ a $}) + \varphi (\tddeux{$ b $}{$ c $}) \varphi (\tddeux{$ a $}{$ d $}) + \varphi (\tdun{$ c $}) \varphi (\tdtroisun{$ a $}{$ d $}{$ b $}) \\
& & + \varphi (\tdun{$ d $}) \varphi (\tdtroisdeux{$ a $}{$ b $}{$ c $}) + \varphi(\tdquatredeux{$a$}{$d$}{$b$}{$c$}) .
\end{eqnarray*}
\end{itemize}
}

\subsection{From $ \cc^{\mathcal{D}}_{CK} $ to $ \Sh^{\mathcal{D}} $} \label{csh}

Let $ \varphi : \mathbb{K} \left(  \mathbb{T}^{\mathcal{D}}_{\cc_{CK}} \right) \rightarrow \mathbb{K} \left( \mathcal{D} \right) $ be a $ \mathbb{K} $-linear map.

\begin{theo} \label{theocsh}
There exists a unique Hopf algebra morphism $ \Phi : \cc^{\mathcal{D}}_{CK} \rightarrow \Sh^{\mathcal{D}} $ such that the following diagram
\begin{eqnarray} \label{diagcsh}
\xymatrix{
\mathbb{K} \left( \mathbb{T}^{\mathcal{D}}_{\cc_{CK}} \right)  \ar[r]^{\varphi} \ar@{^{(}->}[d]^{i} & \mathbb{K} \left( \mathcal{D} \right) \\
\cc^{\mathcal{D}}_{CK} \ar[r]^{\Phi} & \Sh^{\mathcal{D}} \ar@{->>}[u]^{\pi}
}
\end{eqnarray}
is commutative.
\end{theo}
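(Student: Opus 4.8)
The plan is to follow verbatim the argument of Theorem \ref{theoremhsh}, the only substantive change being that the grading by the number of vertices is replaced by the grading by the number of edges and the cut coproduct by the contraction coproduct $\Delta_{\cc^{\mathcal{D}}_{CK}}$. All the structural ingredients used there remain available: $(\cc^{\mathcal{D}}_{CK},\Delta_{\cc^{\mathcal{D}}_{CK}},\varepsilon)$ is a Hopf algebra, graded and connected for the edge degree (its degree $0$ component being one-dimensional, spanned by $\tun$); as a commutative algebra it is the polynomial algebra on the set $\mathbb{T}^{\mathcal{D}}_{\cc_{CK}}$ of decorated trees with at least one edge (being the quotient of the polynomial algebra $\hh^{\mathcal{D}}_{CK}$ by the ideal identifying $\tun$ with the unit); and $\Sh^{\mathcal{D}}$ is cofree, so Lemma \ref{lemcofree} applies with $A=\Sh^{\mathcal{D}}$.

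\textbf{Existence.} I would construct $\Phi$ by induction on the number of edges, setting $\Phi(\tun)=1$ and $\Phi(\addeux{$a$})=\varphi(\addeux{$a$})$. For a forest $F$ of edge degree $n\geq 1$ that is a genuine product $F=F_{1}F_{2}$ of forests of strictly smaller edge degree, put $\Phi(F)=\Phi(F_{1})\Phi(F_{2})$; this is consistent because $\cc^{\mathcal{D}}_{CK}$ is free commutative on $\mathbb{T}^{\mathcal{D}}_{\cc_{CK}}$. If $F=T$ is a tree, note that in $\tdelta_{\cc^{\mathcal{D}}_{CK}}(T)=\sum_{\boldsymbol{e}\mmodels E(T)} Part_{\boldsymbol{e}}(T)\otimes Cont_{\boldsymbol{e}}(T)$ every $\boldsymbol{e}$ is nonempty and nontotal, so by relation (\ref{relatdeg}) both $Part_{\boldsymbol{e}}(T)$ and $Cont_{\boldsymbol{e}}(T)$ have strictly fewer than $n$ edges; hence $(\Phi\otimes\Phi)\circ\tdelta_{\cc^{\mathcal{D}}_{CK}}(T)$ is already defined. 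Exactly as in Theorem \ref{theoremhsh}, the induction hypothesis $(\Phi\otimes\Phi)\circ\tdelta_{\cc^{\mathcal{D}}_{CK}}=\tdelta_{\Sh^{\mathcal{D}}}\circ\Phi$ on smaller forests together with the coassociativity of $\tdelta_{\cc^{\mathcal{D}}_{CK}}$ shows that $(\Phi\otimes\Phi)\circ\tdelta_{\cc^{\mathcal{D}}_{CK}}(T)$ lies in $Ker(\tdelta_{\Sh^{\mathcal{D}}}\otimes Id-Id\otimes\tdelta_{\Sh^{\mathcal{D}}})$, which equals $Im(\tdelta_{\Sh^{\mathcal{D}}})$ by Lemma \ref{lemcofree}. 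Picking $w$ with $(\Phi\otimes\Phi)\circ\tdelta_{\cc^{\mathcal{D}}_{CK}}(T)=\tdelta_{\Sh^{\mathcal{D}}}(w)$ and setting $\Phi(T)=w-\pi(w)+\varphi(T)$ makes (\ref{diagcsh}) commute on $T$ and preserves the identity $(\Phi\otimes\Phi)\circ\tdelta_{\cc^{\mathcal{D}}_{CK}}(T)=\tdelta_{\Sh^{\mathcal{D}}}\circ\Phi(T)$. A routine second induction on the edge degree, using that $\Delta_{\cc^{\mathcal{D}}_{CK}}$ and $\Delta_{\Sh^{\mathcal{D}}}$ are algebra morphisms, upgrades this to compatibility with the full coproducts and counits; since both bialgebras are graded connected, $\Phi$ is then automatically a Hopf algebra morphism.

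\textbf{Uniqueness.} I would prove by induction on the edge degree that two Hopf algebra morphisms $\Phi_{1},\Phi_{2}$ filling in (\ref{diagcsh}) coincide on all trees. In degree $0$ both send $\tun$ to $1$; in degree $1$, $\tdelta_{\Sh^{\mathcal{D}}}\circ\Phi_{i}(\addeux{$a$})=(\Phi_{i}\otimes\Phi_{i})\circ\tdelta_{\cc^{\mathcal{D}}_{CK}}(\addeux{$a$})=0$ forces $\Phi_{i}(\addeux{$a$})\in Vect(\mathcal{D})$, and commutativity of (\ref{diagcsh}) gives $\Phi_{1}(\addeux{$a$})=\Phi_{2}(\addeux{$a$})=\varphi(\addeux{$a$})$. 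For a tree $T$ of edge degree $n\geq 2$, the induction hypothesis yields $\tdelta_{\Sh^{\mathcal{D}}}(\Phi_{1}(T)-\Phi_{2}(T))=(\Phi_{1}\otimes\Phi_{1}-\Phi_{2}\otimes\Phi_{2})\circ\tdelta_{\cc^{\mathcal{D}}_{CK}}(T)=0$, so $\Phi_{1}(T)-\Phi_{2}(T)$ is primitive, hence lies in $Vect(\mathcal{D})$, hence equals $\pi(\Phi_{1}(T)-\Phi_{2}(T))=\varphi(T)-\varphi(T)=0$. Multiplicativity of $\Phi_{1}$ and $\Phi_{2}$ then forces $\Phi_{1}=\Phi_{2}$ on all of $\cc^{\mathcal{D}}_{CK}$.

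The only point that genuinely differs from the proof of Theorem \ref{theoremhsh}, and hence the one to watch, is that the recursion must be run on the \emph{edge} grading: one invokes relation (\ref{relatdeg}) to see that a nonempty nontotal contraction of a tree splits it into a partition and a contracted forest each having strictly fewer edges, which is what keeps the induction well-founded. Apart from that bookkeeping the proof is formally identical, since the proof of Theorem \ref{theoremhsh} used only that the source is a graded connected Hopf algebra, free as an algebra on its trees, and that the target is cofree.
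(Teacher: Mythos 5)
Your proposal is correct and follows exactly the route the paper takes: the paper's own proof of Theorem \ref{theocsh} is just the observation that the argument of Theorem \ref{theoremhsh} carries over verbatim, and your write-up supplies precisely the details (edge grading, relation (\ref{relatdeg}), cofreeness of $\Sh^{\mathcal{D}}$ via Lemma \ref{lemcofree}) that make that transfer work.
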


\begin{proof}
This is the same proof as for theorem \ref{theoremhsh}.
\end{proof}
\\

As in the sections \ref{hsh} and \ref{hcsh}, we give a combinatorial description of the morphism $\Phi$ defined in theorem \ref{theocsh}. We need the following definition:

\begin{defi} \label{partg}
Let $F$ be a nonempty rooted forest of $\cc_{CK}$. A generalized partition of $F$ is a $k$-uplet $(\boldsymbol{e}_{1}, \hdots , \boldsymbol{e}_{k})$ of subsets of $E(F)$, $ 1 \leq k \leq \left| F \right|_{e} $, such that:
\begin{enumerate}
\item $\boldsymbol{e}_{i} \neq \emptyset $, $\boldsymbol{e}_{i} \cap \boldsymbol{e}_{j} = \emptyset $ if $i \neq j$ and $\cup_{i} \boldsymbol{e}_{i} = E(F) $,
\item the edges $\in \boldsymbol{e}_{i}$ are the edges of the same connected component of $F$,
\item if $v$ and $w$ are two vertices of $Part_{\boldsymbol{e}_{i}}(F)$ and if the shortest path in $F$ between $v$ and $w$ contains an edge $\in \boldsymbol{e}_{j}$, then $j < i$.
\end{enumerate}
We shall denote by $\mathcal{P}(F)$ the set of generalized partitions of $F$.
\end{defi}

{\bf Remark.} {If $F$ is a nonempty rooted forest and if $(\boldsymbol{e}_{1}, \hdots , \boldsymbol{e}_{k}) \in \mathcal{P}(F)$, $Cont_{\overline{\boldsymbol{e}_{i}}}(F) = Part_{\boldsymbol{e}_{i}}(F)$ is a tree for all $i$ (with the second point of the definition \ref{partg}).}

\begin{prop} \label{propcombcsh}
Let $F$ be a nonempty forest $\in \cc_{CK}^{\D}$. Then
\begin{eqnarray} \label{formulecombcsh}
\Phi (F) = \sum_{(\boldsymbol{e}_{1}, \hdots , \boldsymbol{e}_{k}) \in \mathcal{P}(F)} \varphi(Cont_{\overline{\boldsymbol{e}_{1}}}(F)) \hdots \varphi(Cont_{\overline{\boldsymbol{e}_{k}}}(F)) .
\end{eqnarray}
\end{prop}

\begin{proof}
We use the following lemma:

\begin{lemma} \label{lemint}
If $F \in \cc_{CK}^{\D} $ is a nonempty tree, then the sets
\begin{eqnarray*}
\mathbb{E}(F) & = & \{ \left( (\boldsymbol{e}_{1}, \hdots , \boldsymbol{e}_{k}),p \right) \:\mid \: (\boldsymbol{e}_{1}, \hdots , \boldsymbol{e}_{k}) \in \mathcal{P}(F), 1 \leq p \leq k-1 \} \\
\mathbb{F}(F) & = & \{ \left( \boldsymbol{e} , (\boldsymbol{f}_{1}, \hdots , \boldsymbol{f}_{q}) , (\boldsymbol{g}_{1}, \hdots , \boldsymbol{g}_{r}) \right) \:\mid \: \boldsymbol{e} \mmodels E(F), (\boldsymbol{f}_{1}, \hdots , \boldsymbol{f}_{q}) \in \mathcal{P}(Part_{\boldsymbol{e}}(F)),\\
& & (\boldsymbol{g}_{1}, \hdots , \boldsymbol{g}_{r}) \in \mathcal{P}(Cont_{\boldsymbol{e}}(F)) \}
\end{eqnarray*}
are in bijections.
\end{lemma}

\begin{proof}
Consider the following two maps:
\begin{eqnarray*}
f: \left\lbrace 
\begin{array}{rcl}
\mathbb{E}(F) & \rightarrow & \mathbb{F}(F) \\
\left( (\boldsymbol{e}_{1}, \hdots , \boldsymbol{e}_{k}),p \right) & \mapsto & \left( \cup_{1 \leq i \leq p} \boldsymbol{e}_{i} , (\boldsymbol{e}_{1}, \hdots , \boldsymbol{e}_{p}), (\boldsymbol{e}_{p+1}, \hdots , \boldsymbol{e}_{k}) \right)
\end{array}
\right. 
\end{eqnarray*}
and
\begin{eqnarray*}
g : \left\lbrace 
\begin{array}{rcl}
\mathbb{F}(F) & \rightarrow & \mathbb{E}(F) \\
\left(\boldsymbol{e}, (\boldsymbol{f}_{1}, \hdots , \boldsymbol{f}_{q}) , (\boldsymbol{g}_{1}, \hdots , \boldsymbol{g}_{r}) \right) & \mapsto & \left( (\boldsymbol{f}_{1}, \hdots , \boldsymbol{f}_{q}, \boldsymbol{g}_{1}, \hdots , \boldsymbol{g}_{r}) , q \right) .
\end{array}
\right. 
\end{eqnarray*}

\noindent \underline{$f$ is well defined :}

Let $\left( (\boldsymbol{e}_{1}, \hdots , \boldsymbol{e}_{k}),p \right) \in \mathbb{E}(F)$. Then $\boldsymbol{e} = \cup_{1 \leq i \leq p} \boldsymbol{e}_{i} $ is a nonempty nontotal contraction of $F$.
\begin{enumerate}
\item \begin{enumerate}
\item $(\boldsymbol{e}_{1}, \hdots , \boldsymbol{e}_{p})$ is a $p$-uplet of subsets of $E(Part_{\boldsymbol{e}}(F)) = \boldsymbol{e}$. By hypothesis, $(\boldsymbol{e}_{1}, \hdots , \boldsymbol{e}_{k}) \in \mathcal{P}(F)$. So $\boldsymbol{e}_{i} \neq \emptyset $, $\boldsymbol{e}_{i} \cap \boldsymbol{e}_{j} = \emptyset $ and $\cup_{1 \leq i \leq p} \boldsymbol{e}_{i} = E(Part_{\boldsymbol{e}}(F)) $.
\item The edges $\in \boldsymbol{e}_{i}$, $1 \leq i \leq p$, are the edges of the same connected component of $F$ therefore of $Part_{\boldsymbol{e}}(F)$ because $\boldsymbol{e}_{i} \subseteq \boldsymbol{e}$.
\item Let $v$ and $w$ be two vertices of $Part_{\boldsymbol{e}_{i}}(Part_{\boldsymbol{e}}(F)) = Part_{\boldsymbol{e}_{i}}(F)$ (because $\boldsymbol{e}_{i} \subseteq \boldsymbol{e}$). If the shortest path in $Part_{\boldsymbol{e}}(F)$ between $v$ and $w$ contains an edge $\in \boldsymbol{e}_{j}$, then the shortest path in $F$ between $v$ and $w$ contains also an edge $\in \boldsymbol{e}_{j}$. As $(\boldsymbol{e}_{1}, \hdots , \boldsymbol{e}_{k}) \in \mathcal{P}(F)$, we have $j<i$. 
\end{enumerate}
So $(\boldsymbol{e}_{1}, \hdots , \boldsymbol{e}_{p}) \in \mathcal{P}(Part_{\boldsymbol{e}}(F))$.

\item \begin{enumerate}
\item $(\boldsymbol{e}_{p+1}, \hdots , \boldsymbol{e}_{k})$ is a $(k-p)$-uplet of subsets of $E(Cont_{\boldsymbol{e}}(F)) = \overline{\boldsymbol{e}}$. By hypothesis, $(\boldsymbol{e}_{1}, \hdots , \boldsymbol{e}_{k}) \in \mathcal{P}(F)$. So $\boldsymbol{e}_{i} \neq \emptyset $, $\boldsymbol{e}_{i} \cap \boldsymbol{e}_{j} = \emptyset $ and $\cup_{p+1 \leq i \leq k} \boldsymbol{e}_{i} = E(Cont_{\boldsymbol{e}}(F)) $.
\item The edges $\in \boldsymbol{e}_{i}$, $p+1 \leq i \leq k$, are the edges of the same connected component of $F$ therefore of $Cont_{\boldsymbol{e}}(F)$ (we contract in $F$ some connected components).
\item Let $i$ be an integer $\in \{p+1, \hdots , k\}$ and $v$ and $w$ two vertices of $Part_{\boldsymbol{e}_{i}}(Cont_{\boldsymbol{e}}(F)) = Part_{\boldsymbol{e}_{i}}(F)$ (because $\boldsymbol{e}_{i} \cap \boldsymbol{e} = \emptyset$). If the shortest path in $Cont_{\boldsymbol{e}}(F)$ between $v$ and $w$ contains an edge $\in \boldsymbol{e}_{j}$ then the shortest path in $F$ between $v$ and $w$ contains also an edge $\in \boldsymbol{e}_{j}$. As $(\boldsymbol{e}_{1}, \hdots , \boldsymbol{e}_{k}) \in \mathcal{P}(F)$, we have $j<i$. 
\end{enumerate}
Thus $(\boldsymbol{e}_{p+1}, \hdots , \boldsymbol{e}_{k}) \in \mathcal{P}(Cont_{\boldsymbol{e}}(F))$.
\end{enumerate}

So $f \left( \left( (\boldsymbol{e}_{1}, \hdots , \boldsymbol{e}_{k}),p \right) \right) \in \mathbb{F}(F)$.\\

\noindent \underline{$g$ is well defined :}

Let $\left(\boldsymbol{e}, (\boldsymbol{f}_{1}, \hdots , \boldsymbol{f}_{q}) , (\boldsymbol{g}_{1}, \hdots , \boldsymbol{g}_{r}) \right) \in \mathbb{F}(F)$. Let us show that $(\boldsymbol{f}_{1}, \hdots , \boldsymbol{f}_{q},\boldsymbol{g}_{1}, \hdots , \boldsymbol{g}_{r}) \in \mathcal{P}(F)$.
\begin{enumerate}
\item As $(\boldsymbol{f}_{1}, \hdots , \boldsymbol{f}_{q}) \in \mathcal{P}(Part_{\boldsymbol{e}}(F))$ and $ (\boldsymbol{g}_{1}, \hdots , \boldsymbol{g}_{r}) \in \mathcal{P}(Cont_{\boldsymbol{e}}(F))$, $\boldsymbol{f}_{i} \neq \emptyset $, $\boldsymbol{g}_{i} \neq \emptyset$, $\boldsymbol{f}_{i} \cap \boldsymbol{f}_{j} = \emptyset$, $\boldsymbol{g}_{i} \cap \boldsymbol{g}_{j} = \emptyset$ and $\left( \cup_{i} \boldsymbol{f}_{i} \right) \bigcup \left( \cup_{i} \boldsymbol{g}_{i} \right) = E(Part_{\boldsymbol{e}}(F)) \bigcup E(Cont_{\boldsymbol{e}}(F)) = E(F) $. In addition, as $\boldsymbol{f}_{i} \subseteq E(Part_{\boldsymbol{e}}(F)) = \boldsymbol{e} $ and $\boldsymbol{g}_{j} \subseteq E(Cont_{\boldsymbol{e}}(F)) = \overline{\boldsymbol{e}}$, $\boldsymbol{f}_{i} \cap \boldsymbol{g}_{j} = \emptyset$.
\item The edges $\in \boldsymbol{f}_{i}$ are the edges of the same connected component of $Part_{\boldsymbol{e}}(F)$. As all the trees of the forest $Part_{\boldsymbol{e}}(F)$ are subtrees of $F$, the edges $\in \boldsymbol{f}_{i}$ are the edges of the same connected component of $F$. Moreover if the edges $\in \boldsymbol{g}_{i}$ are the edges of the same connected component of $Cont_{\boldsymbol{e}}(F)$, it is also true in $F$.
\item \begin{enumerate}
\item Let $i$ be an integer $\in \{1, \hdots , q\}$ and $v$ and $w$ two vertices of $Part_{\boldsymbol{f}_{i}}(F)$. We have $\boldsymbol{f}_{i} \subseteq \boldsymbol{e}$ therefore $Part_{\boldsymbol{f}_{i}}(F) = Part_{\boldsymbol{f}_{i}}(Part_{\boldsymbol{e}}(F))$. If the shortest path in $F$ between $v$ and $w$ contains:
\begin{enumerate}
\item an edge $\in \boldsymbol{f}_{j}$. As $(\boldsymbol{f}_{1}, \hdots , \boldsymbol{f}_{q}) \in \mathcal{P}(Part_{\boldsymbol{e}}(F))$, $j<i$.
\item an edge $\in \boldsymbol{g}_{j}$. Then the connected component of $Part_{\boldsymbol{e}}(F)$ containing $v$ and $w$ has an edge $\in \boldsymbol{g}_{j}$. This is impossible because $E(Part_{\boldsymbol{e}}(F)) = \boldsymbol{e}$ and $\boldsymbol{g}_{j} \subseteq E(Cont_{\boldsymbol{e}}(F)) = \overline{\boldsymbol{e}}$.
\end{enumerate}
\item Let $i$ be an integer $\in \{1, \hdots , r\}$ and $v$ and $w$ two vertices of $Part_{\boldsymbol{g}_{i}}(F)$. $\boldsymbol{g}_{i} \cap \boldsymbol{e} = \emptyset$ therefore $Part_{\boldsymbol{g}_{i}}(F) = Part_{\boldsymbol{g}_{i}}(Cont_{\boldsymbol{e}}(F))$. If the shortest path in $F$ between $v$ and $w$ contains:
\begin{enumerate}
\item an edge $\in \boldsymbol{g}_{j}$. As $ (\boldsymbol{g}_{1}, \hdots , \boldsymbol{g}_{r}) \in \mathcal{P}(Cont_{\boldsymbol{e}}(F))$, $j<i$.
\item an edge $\in \boldsymbol{f}_{j}$. It is good because $\boldsymbol{f}_{j}$ is before $\boldsymbol{g}_{i}$.
\end{enumerate}
\end{enumerate}
Thus $(\boldsymbol{f}_{1}, \hdots , \boldsymbol{f}_{q},\boldsymbol{g}_{1}, \hdots , \boldsymbol{g}_{r}) \in \mathcal{P}(F)$.
\end{enumerate}

So $g \left( \left(\boldsymbol{e}, (\boldsymbol{f}_{1}, \hdots , \boldsymbol{f}_{q}) , (\boldsymbol{g}_{1}, \hdots , \boldsymbol{g}_{r}) \right) \right) \in \mathbb{E}(F)$

\vspace{0.5cm}

\noindent Finally, we easily see that $ f \circ g = Id_{\mathbb{F}(F)} $ and $ g \circ f = Id_{\mathbb{E}(F)} $.
\end{proof}
\\

We now prove proposition \ref{propcombcsh}. By induction on the edges degree $n$ of $F \in \cc_{CK}^{\D}$. If $n=1$, $F=\addeux{$a$}$ with $a \in \D$. Then $\Phi (\addeux{$a$}) = \varphi(\addeux{$a$})$ and formula (\ref{formulecombcsh}) is true. Suppose that $n \geq 2$ and that the property is true in degrees $k < n$. Then
\begin{eqnarray*}
\tdelta_{\Sh^{\mathcal{D}}} \circ \Phi (F) & = & (\Phi \circ \Phi) \circ \tdelta_{\cc_{CK}^{\D}} (F) \\
& = & \sum_{\boldsymbol{e} \mmodels E(F)} \Phi \left( Part_{\boldsymbol{e}}(F) \right) \otimes \Phi \left( Cont_{\boldsymbol{e}}(F) \right) \\
& = & \sum_{\boldsymbol{e} \mmodels E(F)} \left( \sum_{(\boldsymbol{f}_{1}, \hdots , \boldsymbol{f}_{q}) \in \mathcal{P}(Part_{\boldsymbol{e}}(F))} \varphi(Cont_{\overline{\boldsymbol{f}_{1}}}(F)) \hdots \varphi(Cont_{\overline{\boldsymbol{f}_{q}}}(F)) \right) \\
& & \hspace{2.5cm} \otimes \left( \sum_{(\boldsymbol{g}_{1}, \hdots , \boldsymbol{g}_{r}) \in \mathcal{P}(Cont_{\boldsymbol{e}}(F))} \varphi(Cont_{\overline{\boldsymbol{g}_{1}}}(F)) \hdots \varphi(Cont_{\overline{\boldsymbol{g}_{r}}}(F)) \right)
\end{eqnarray*}
using induction hypothesis in the last equality. So, with lemma \ref{lemint},
\begin{eqnarray*}
\tdelta_{\Sh^{\mathcal{D}}} \circ \Phi (F)
& = & \sum_{\left( \boldsymbol{e} , (\boldsymbol{f}_{1}, \hdots , \boldsymbol{f}_{q}) , (\boldsymbol{g}_{1}, \hdots , \boldsymbol{g}_{r}) \right) \in \mathbb{F}(F)} \varphi(Cont_{\overline{\boldsymbol{f}_{1}}}(F)) \hdots \varphi(Cont_{\overline{\boldsymbol{f}_{q}}}(F)) \\
& & \hspace{5cm} \otimes \varphi(Cont_{\overline{\boldsymbol{g}_{1}}}(F)) \hdots \varphi(Cont_{\overline{\boldsymbol{g}_{r}}}(F)) \\
& = & \sum_{\left( (\boldsymbol{e}_{1}, \hdots , \boldsymbol{e}_{k}),p \right) \in \mathbb{E}(F)} \varphi(Cont_{\overline{\boldsymbol{e}_{1}}}(F)) \hdots \varphi(Cont_{\overline{\boldsymbol{e}_{p}}}(F)) \\
& & \hspace{5cm} \otimes \varphi(Cont_{\overline{\boldsymbol{e}_{p+1}}}(F)) \hdots \varphi(Cont_{\overline{\boldsymbol{e}_{k}}}(F)) \\
& = & \sum_{(\boldsymbol{e}_{1}, \hdots , \boldsymbol{e}_{k}) \in \mathcal{P}(F)} \sum_{1 \leq p \leq k-1} \varphi(Cont_{\overline{\boldsymbol{e}_{1}}}(F)) \hdots \varphi(Cont_{\overline{\boldsymbol{e}_{p}}}(F)) \\
& & \hspace{5cm} \otimes \varphi(Cont_{\overline{\boldsymbol{e}_{p+1}}}(F)) \hdots \varphi(Cont_{\overline{\boldsymbol{e}_{k}}}(F)) .
\end{eqnarray*}
Therefore
\begin{eqnarray*}
\Phi (F) = \sum_{(\boldsymbol{e}_{1}, \hdots , \boldsymbol{e}_{k}) \in \mathcal{P}(F)} \varphi(Cont_{\overline{\boldsymbol{e}_{1}}}(F)) \hdots \varphi(Cont_{\overline{\boldsymbol{e}_{k}}}(F))
\end{eqnarray*}
and by induction, we have the result.
\end{proof}
\\

{\bf Examples.} {We introduce a notation. If $w = w_{1} \hdots w_{n}$ is a $\mathcal{D}$-word, we denote $\mbox{Perm}(w)$ the sum of all $\mathcal{D}$-words whose letters are $w_{1}, \hdots , w_{n}$. For example, $\mbox{Perm}(abc) = abc + acb + bac + bca + cab+ cba$.
\begin{itemize}
\item In edges degree $1$, $\Phi (\addeux{$a$}) = \varphi ( \addeux{$a$})$.
\item In edges degree $2$,
\begin{eqnarray*}
\Phi (\adtroisun{$b$}{$a$}) & = & \varphi (\adtroisun{$b$}{$a$}) + \varphi ( \addeux{$a$}) \varphi ( \addeux{$b$}) + \varphi ( \addeux{$b$}) \varphi ( \addeux{$a$}) \\
\Phi (\adtroisdeux{$a$}{$b$}) & = & \varphi(\adtroisdeux{$a$}{$b$}) + \varphi ( \addeux{$a$}) \varphi ( \addeux{$b$}) + \varphi ( \addeux{$b$}) \varphi ( \addeux{$a$}) .
\end{eqnarray*}
\item In edges degree $3$,
\begin{eqnarray*}
\Phi (\adquatreun{$a$}{$b$}{$c$}) & = & \varphi (\adquatreun{$a$}{$b$}{$c$}) + \varphi (\addeux{$a$}) \varphi (\adtroisun{$c$}{$b$}) + \varphi (\adtroisun{$c$}{$b$}) \varphi (\addeux{$a$}) + \varphi (\addeux{$b$}) \varphi (\adtroisun{$c$}{$a$}) + \varphi (\adtroisun{$c$}{$a$}) \varphi (\addeux{$b$})\\
& & + \varphi (\addeux{$c$}) \varphi (\adtroisun{$b$}{$a$}) + \varphi (\adtroisun{$b$}{$a$}) \varphi (\addeux{$c$}) + \mbox{Perm}(\varphi (\addeux{$a$}) \varphi (\addeux{$b$}) \varphi (\addeux{$c$})) \\
\Phi (\adquatrequatre{$a$}{$b$}{$c$}) & = & \varphi(\adquatrequatre{$a$}{$b$}{$c$}) + \varphi (\addeux{$a$}) \varphi (\adtroisun{$c$}{$b$}) + \varphi (\adtroisun{$c$}{$b$}) \varphi (\addeux{$a$}) + \varphi (\addeux{$c$}) \varphi (\adtroisdeux{$a$}{$b$}) + \varphi (\adtroisdeux{$a$}{$b$}) \varphi (\addeux{$c$}) \\
& & + \varphi (\addeux{$b$}) \varphi (\adtroisdeux{$a$}{$c$}) + \varphi (\adtroisdeux{$a$}{$c$}) \varphi (\addeux{$b$}) + \mbox{Perm}(\varphi (\addeux{$a$})  \varphi (\addeux{$b$}) \varphi (\addeux{$c$})) \\
\Phi (\adquatredeux{$b$}{$a$}{$c$}) & = & \varphi (\adquatredeux{$b$}{$a$}{$c$}) + \varphi (\addeux{$a$}) \varphi (\adtroisun{$b$}{$c$}) + \varphi (\addeux{$b$}) \varphi (\adtroisdeux{$a$}{$c$}) + \varphi (\adtroisdeux{$a$}{$c$}) \varphi (\addeux{$b$}) + \varphi (\addeux{$c$}) \varphi (\adtroisun{$b$}{$a$}) \\
& & + \varphi (\adtroisun{$b$}{$a$}) \varphi (\addeux{$c$}) + \mbox{Perm}(\varphi (\addeux{$a$})  \varphi (\addeux{$b$}) \varphi (\addeux{$c$})) .
\end{eqnarray*}
\item Finally, in edges degree $4$, with the tree $\adcinqsix{$a$}{$b$}{$c$}{$d$}$,
\begin{eqnarray*}
\Phi (\adcinqsix{$a$}{$b$}{$c$}{$d$}) & = & \varphi (\adcinqsix{$a$}{$b$}{$c$}{$d$}) + \varphi (\addeux{$a$}) \varphi (\adquatreun{$c$}{$d$}{$b$}) + \varphi (\addeux{$b$}) \varphi (\adquatrequatre{$a$}{$c$}{$d$}) + \varphi (\adquatrequatre{$a$}{$c$}{$d$}) \varphi (\addeux{$b$}) + \varphi (\addeux{$c$}) \varphi (\adquatredeux{$b$}{$a$}{$d$}) \\
& & + \varphi (\adquatredeux{$b$}{$a$}{$d$}) \varphi (\addeux{$c$}) + \varphi (\addeux{$d$}) \varphi (\adquatredeux{$b$}{$a$}{$c$}) + \varphi (\adquatredeux{$b$}{$a$}{$c$}) \varphi (\addeux{$d$}) + \varphi(\adtroisun{$b$}{$a$}) \varphi(\adtroisun{$d$}{$c$}) \\
& & + \varphi(\adtroisun{$d$}{$c$}) \varphi(\adtroisun{$b$}{$a$}) + \varphi (\adtroisdeux{$a$}{$d$}) \varphi(\adtroisun{$b$}{$c$}) + \varphi (\adtroisdeux{$a$}{$c$}) \varphi(\adtroisun{$b$}{$d$}) + \mbox{Perm}(\varphi(\adtroisun{$b$}{$a$}) \varphi (\addeux{$c$}) \varphi (\addeux{$d$})) \\
& & + \mbox{Perm}(\varphi(\adtroisun{$d$}{$c$}) \varphi (\addeux{$a$}) \varphi (\addeux{$b$})) + \mbox{Perm}(\varphi(\adtroisdeux{$a$}{$d$}) \varphi (\addeux{$b$}) \varphi (\addeux{$c$})) \\
& & + \mbox{Perm}(\varphi(\adtroisdeux{$a$}{$c$}) \varphi (\addeux{$b$}) \varphi (\addeux{$d$})) + \varphi (\addeux{$a$}) \varphi(\adtroisun{$c$}{$b$}) \varphi (\addeux{$d$}) + \varphi (\addeux{$a$}) \varphi (\addeux{$d$}) \varphi(\adtroisun{$c$}{$b$}) \\
& & + \varphi (\addeux{$d$}) \varphi (\addeux{$a$}) \varphi(\adtroisun{$c$}{$b$}) + \varphi (\addeux{$a$}) \varphi(\adtroisun{$d$}{$b$}) \varphi (\addeux{$c$}) + \varphi (\addeux{$a$}) \varphi (\addeux{$c$}) \varphi(\adtroisun{$d$}{$b$}) \\
& & + \varphi (\addeux{$c$}) \varphi (\addeux{$a$}) \varphi(\adtroisun{$d$}{$b$}) + \mbox{Perm}(\varphi (\addeux{$a$}) \varphi (\addeux{$b$}) \varphi (\addeux{$c$}) \varphi (\addeux{$d$}))
\end{eqnarray*}
\end{itemize}
}

\subsection{From $ \cc^{\mathcal{D}}_{CK} $ to $ \Csh^{\mathcal{D}} $}

Let $ \varphi : \mathbb{K} \left( \mathbb{T}^{\mathcal{D}}_{\cc_{CK}} \right) \rightarrow \mathbb{K} \left( \mathcal{D} \right) $ be a $ \mathbb{K} $-linear map. We suppose that $ \mathcal{D} $ is equipped with an associative and commutative product $ \left[ \cdot , \cdot \right] : (a,b) \in \D^{2} \mapsto \left[ a b\right] \in \D $.

\begin{theo} \label{theoccsh}
There exists a unique Hopf algebra morphism $ \Phi : \cc^{\mathcal{D}}_{CK} \rightarrow \Csh^{\mathcal{D}} $ such that the following diagram
\begin{eqnarray} \label{diagccsh}
\xymatrix{
\mathbb{K} \left( \mathbb{T}^{\mathcal{D}}_{\cc_{CK}} \right)  \ar[r]^{\varphi} \ar@{^{(}->}[d]^{i} & \mathbb{K} \left( \mathcal{D} \right) \\
\cc^{\mathcal{D}}_{CK} \ar[r]^{\Phi} & \Csh^{\mathcal{D}} \ar@{->>}[u]^{\pi}
}
\end{eqnarray}
is commutative.
\end{theo}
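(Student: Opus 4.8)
The plan is to reuse verbatim the scheme of the proofs of Theorems~\ref{theoremhsh}, \ref{theohcsh} and \ref{theocsh}, feeding in their two structural inputs at once. On the one hand, the target $\Csh^{\mathcal{D}}$ is cofree --- this is precisely the observation used to prove Theorem~\ref{theohcsh} --- so Lemma~\ref{lemcofree} is available for it. On the other hand, the source $\cc^{\mathcal{D}}_{CK}$ is graded and connected by the number of edges, and for a tree $T$ with $n\geq 1$ edges every tensorand of $\tdelta_{\cc^{\mathcal{D}}_{CK}}(T)=\sum_{\boldsymbol{e}\mmodels E(T)}Part_{\boldsymbol{e}}(T)\otimes Cont_{\boldsymbol{e}}(T)$ has edges degree strictly smaller than $n$: indeed $0<|Part_{\boldsymbol{e}}(T)|_e,\,|Cont_{\boldsymbol{e}}(T)|_e<|T|_e$ by relation~(\ref{relatdeg}), while $Part_{\boldsymbol{e}}(T)$ is a forest and $Cont_{\boldsymbol{e}}(T)$ is a tree. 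Hence an induction on the number of edges is available, and since $\cc^{\mathcal{D}}_{CK}$ is the free commutative algebra on $\mathbb{T}^{\mathcal{D}}_{\cc_{CK}}$ it suffices to build $\Phi$ on trees and extend it as an algebra morphism.

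For existence, set $\Phi(\tun)=1$ and $\Phi(\addeux{$a$})=\varphi(\addeux{$a$})$ for $a\in\mathcal{D}$, and $\Phi(F_1\hdots F_m)=\Phi(F_1)\hdots\Phi(F_m)$ on forests. Assume $\Phi$ is defined and multiplicative on forests of edges degree $<n$, with $(\Phi\otimes\Phi)\circ\tdelta_{\cc^{\mathcal{D}}_{CK}}=\tdelta_{\Csh^{\mathcal{D}}}\circ\Phi$ there, and let $T$ be a tree with $n\geq 1$ edges. By the previous remark $(\Phi\otimes\Phi)\circ\tdelta_{\cc^{\mathcal{D}}_{CK}}(T)$ is already defined, and exactly as in the proof of Theorem~\ref{theoremhsh}, coassociativity of $\tdelta_{\cc^{\mathcal{D}}_{CK}}$ and the induction hypothesis give
$$ (\tdelta_{\Csh^{\mathcal{D}}}\otimes Id-Id\otimes\tdelta_{\Csh^{\mathcal{D}}})\circ(\Phi\otimes\Phi)\circ\tdelta_{\cc^{\mathcal{D}}_{CK}}(T)=0 . $$
By Lemma~\ref{lemcofree} (cofreeness of $\Csh^{\mathcal{D}}$) there is $w\in\Csh^{\mathcal{D}}$, which may be taken in $Ker(\varepsilon_{\Csh^{\mathcal{D}}})$, with $(\Phi\otimes\Phi)\circ\tdelta_{\cc^{\mathcal{D}}_{CK}}(T)=\tdelta_{\Csh^{\mathcal{D}}}(w)$; put $\Phi(T)=w-\pi(w)+\varphi(T)$. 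Then $\pi\circ\Phi(T)=\varphi(T)$ and $\tdelta_{\Csh^{\mathcal{D}}}\circ\Phi(T)=\tdelta_{\Csh^{\mathcal{D}}}(w)=(\Phi\otimes\Phi)\circ\tdelta_{\cc^{\mathcal{D}}_{CK}}(T)$, which closes the induction. A routine check shows that $\Phi$ is unital, sends $Ker(\varepsilon_{\cc^{\mathcal{D}}_{CK}})$ into $Ker(\varepsilon_{\Csh^{\mathcal{D}}})$, and intertwines the full coproducts (since it intertwines the reduced ones and is unital), so $\Phi$ is a bialgebra morphism; as $\Csh^{\mathcal{D}}$ is a Hopf algebra, $\Phi$ automatically commutes with the antipodes, and diagram~(\ref{diagccsh}) commutes by construction.

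For uniqueness, let $\Phi_1,\Phi_2$ be two such morphisms; I show $\Phi_1=\Phi_2$ on trees by induction on the edges degree, the forest case following by multiplicativity. One has $\Phi_i(\tun)=1$, while $\tdelta_{\Csh^{\mathcal{D}}}\circ\Phi_i(\addeux{$a$})=(\Phi_i\otimes\Phi_i)\circ\tdelta_{\cc^{\mathcal{D}}_{CK}}(\addeux{$a$})=0$ forces $\Phi_i(\addeux{$a$})$ into the primitive space $\mathbb{K}(\mathcal{D})$ of $\Csh^{\mathcal{D}}$, whence $\Phi_i(\addeux{$a$})=\varphi(\addeux{$a$})$ by commutativity of~(\ref{diagccsh}). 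For a tree $T$ of edges degree $n\geq 2$, the induction hypothesis gives $\tdelta_{\Csh^{\mathcal{D}}}\circ\Phi_1(T)=(\Phi_1\otimes\Phi_1)\circ\tdelta_{\cc^{\mathcal{D}}_{CK}}(T)=(\Phi_2\otimes\Phi_2)\circ\tdelta_{\cc^{\mathcal{D}}_{CK}}(T)=\tdelta_{\Csh^{\mathcal{D}}}\circ\Phi_2(T)$, so $\Phi_1(T)-\Phi_2(T)$ is primitive, i.e.\ lies in $\mathbb{K}(\mathcal{D})$, hence equals $\pi(\Phi_1(T)-\Phi_2(T))=\varphi(T)-\varphi(T)=0$. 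The argument is formally identical to that of Theorem~\ref{theoremhsh} with $\Csh^{\mathcal{D}}$ in place of $\Sh^{\mathcal{D}}$; the only place demanding care --- and the closest thing to a genuine obstacle --- is the bookkeeping that makes the induction run on the edge grading of the quotient $\cc^{\mathcal{D}}_{CK}$, namely the verification that $\tdelta_{\cc^{\mathcal{D}}_{CK}}$ strictly lowers that degree on both tensor factors and that those factors (a forest and a tree) are objects on which $\Phi$ has already been constructed.
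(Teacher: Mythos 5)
Your proposal is correct and follows exactly the route the paper intends: the paper's own proof of Theorem~\ref{theoccsh} is literally ``same proof as Theorem~\ref{theoremhsh}'', i.e.\ the inductive construction via cofreeness of the target (Lemma~\ref{lemcofree}) transplanted to the edge grading of $\cc^{\mathcal{D}}_{CK}$, which is what you carry out. Your added bookkeeping --- that $\tdelta_{\cc^{\mathcal{D}}_{CK}}$ strictly lowers the edges degree on both tensor factors by relation~(\ref{relatdeg}), and that $\cc^{\mathcal{D}}_{CK}$ is free commutative on $\mathbb{T}^{\mathcal{D}}_{\cc_{CK}}$ so it suffices to define $\Phi$ on trees --- is exactly the verification the paper leaves implicit.
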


\begin{proof}
This is the same proof as for theorem \ref{theoremhsh}.
\end{proof}
\\

We give a combinatorial description of the morphism $\Phi$ defined in theorem \ref{theoccsh}. For this, we give the following definition:

\begin{defi}
Let $F$ be a nonempty rooted forest of $\cc_{CK}$. A generalized and contracted partition of $F$ is a $l$-uplet $(f_{1}, \hdots , f_{l})$ such that:
\begin{enumerate}
\item for all $1 \leq i \leq l$, $f_{i} = (\boldsymbol{e}^{i}_{1}, \hdots , \boldsymbol{e}^{i}_{k_{i}})$ is a $k_{i}$-uplet of subsets of $E(F)$,
\item $(\boldsymbol{e}^{1}_{1}, \hdots ,\boldsymbol{e}^{1}_{k_{1}}, \boldsymbol{e}^{2}_{1}, \hdots , \boldsymbol{e}^{l}_{k_{l}}) \in \mathcal{P}(F)$,
\item if $Part_{\boldsymbol{e}^{i}_{p}}(F)$ and $Part_{\boldsymbol{e}^{i}_{q}}(F)$ are two disconnected components of $F$ and if the shortest path in $F$ between $Part_{\boldsymbol{e}^{i}_{p}}(F)$ and $Part_{\boldsymbol{e}^{i}_{q}}(F)$ contains an edge $\in \boldsymbol{e}^{j}_{r}$, then $j>i$.
\end{enumerate}
We shall denote by $\mathcal{P}_{c}(F)$ the set of generalized and contracted partitions of $F$.
\end{defi}

\begin{prop} \label{propcombccsh}
Let $F$ be a nonempty forest $\in \cc_{CK}^{\D}$. Then
\begin{eqnarray} \label{formulecombccsh}
\begin{array}{rcl}
\Phi (F) & = & \displaystyle\sum_{\substack{(f_{1}, \hdots, f_{l}) \in \mathcal{P}_{c}(F)\\f_{i} = (\boldsymbol{e}^{i}_{1}, \hdots , \boldsymbol{e}^{i}_{k_{i}})}} \left( \left[ \varphi(Cont_{\overline{\boldsymbol{e}^{1}_{1}}}(F)) \hdots \varphi(Cont_{\overline{\boldsymbol{e}^{1}_{k_{1}}}}(F)) \right]^{(k_{1})} \hdots \right. \\
& & \hspace{5cm} \hdots \left. \left[ \varphi(Cont_{\overline{\boldsymbol{e}^{l}_{1}}}(F)) \hdots \varphi(Cont_{\overline{\boldsymbol{e}^{l}_{k_{l}}}}(F)) \right]^{(k_{l})} \right).
\end{array}
\end{eqnarray}
\end{prop}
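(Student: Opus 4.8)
The plan is to follow the pattern of Propositions~\ref{propcombhsh}, \ref{propcombcsh} and of the combinatorial description in Section~\ref{hcsh}: prove (\ref{formulecombccsh}) by induction on the edges degree $n$ of $F\in\cc_{CK}^{\mathcal{D}}$, using that $\Phi$ is a coalgebra morphism (Theorem~\ref{theoccsh}), i.e. $\tdelta_{\Csh^{\mathcal{D}}}\circ\Phi=(\Phi\otimes\Phi)\circ\tdelta_{\cc_{CK}^{\mathcal{D}}}$, and recovering $\Phi(F)$ from its reduced coproduct. For $n=1$, $F=\addeux{$a$}$ and (\ref{formulecombccsh}) reduces to $\Phi(\addeux{$a$})=\varphi(\addeux{$a$})$, which is the definition of $\Phi$. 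Since $\Phi$ is an algebra morphism it then suffices, for $n\geq 2$, to treat $F$ a tree. Writing $R(F)$ for the right-hand side of (\ref{formulecombccsh}), the induction closes once we have $\tdelta_{\Csh^{\mathcal{D}}}(\Phi(F))=\tdelta_{\Csh^{\mathcal{D}}}(R(F))$: their difference is then a primitive of $\Csh^{\mathcal{D}}$, hence lies in $\mathbb{K}(\mathcal{D})$ by cofreeness of $\Csh^{\mathcal{D}}$; as $\pi\circ\Phi$ agrees with $\varphi$ on trees, it remains only to check that the weight-one component of $R(F)$ equals $\varphi(F)$ (this last check rests on condition~(3) in the definition of $\mathcal{P}_{c}$).

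To compute $\tdelta_{\Csh^{\mathcal{D}}}(\Phi(F))$ for $F$ a tree of edges degree $n\geq 2$, expand
\begin{eqnarray*}
\tdelta_{\Csh^{\mathcal{D}}}(\Phi(F))=(\Phi\otimes\Phi)\circ\tdelta_{\cc_{CK}^{\mathcal{D}}}(F)=\sum_{\boldsymbol{e}\mmodels E(F)}\Phi(Part_{\boldsymbol{e}}(F))\otimes\Phi(Cont_{\boldsymbol{e}}(F)),
\end{eqnarray*}
apply the induction hypothesis to both factors (each has strictly fewer edges than $F$), and use, as in the proof of Proposition~\ref{propcombcsh}, the identities $Cont_{\overline{\boldsymbol{f}}}(Part_{\boldsymbol{e}}(F))=Cont_{\overline{\boldsymbol{f}}}(F)$ for $\boldsymbol{f}\subseteq\boldsymbol{e}$ and $Cont_{\overline{\boldsymbol{g}}}(Cont_{\boldsymbol{e}}(F))=Cont_{\overline{\boldsymbol{g}}}(F)$ for $\boldsymbol{g}\cap\boldsymbol{e}=\emptyset$ to rewrite every argument of $\varphi$ in terms of $F$ itself. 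This presents $\tdelta_{\Csh^{\mathcal{D}}}(\Phi(F))$ as a sum indexed by triples $(\boldsymbol{e},(g_{1},\hdots,g_{q}),(h_{1},\hdots,h_{r}))$ with $\boldsymbol{e}\mmodels E(F)$, $(g_{1},\hdots,g_{q})\in\mathcal{P}_{c}(Part_{\boldsymbol{e}}(F))$ and $(h_{1},\hdots,h_{r})\in\mathcal{P}_{c}(Cont_{\boldsymbol{e}}(F))$. Here it is essential that the coproduct of $\Csh^{\mathcal{D}}$ is plain deconcatenation (the same as for $\Sh^{\mathcal{D}}$): it splits the word $w_{1}\hdots w_{l}$ associated to $(f_{1},\hdots,f_{l})\in\mathcal{P}_{c}(F)$ between two consecutive letters and never breaks a single letter $w_{i}=\bigl[\varphi(Cont_{\overline{\boldsymbol{e}^{i}_{1}}}(F))\hdots\varphi(Cont_{\overline{\boldsymbol{e}^{i}_{k_{i}}}}(F))\bigr]^{(k_{i})}$, the associativity and commutativity of $[\cdot,\cdot]$ guaranteeing that $w_{i}$ does not depend on the internal ordering of the blocks of $f_{i}$.

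The heart of the proof is then a bijection lemma, the analogue for contracted partitions of Lemma~\ref{lemint}: the set of triples above is in bijection with the set of pairs $((f_{1},\hdots,f_{l}),p)$ with $(f_{1},\hdots,f_{l})\in\mathcal{P}_{c}(F)$ and $1\leq p\leq l-1$, via $((f_{1},\hdots,f_{l}),p)\mapsto(\boldsymbol{e},(f_{1},\hdots,f_{p}),(f_{p+1},\hdots,f_{l}))$ where $\boldsymbol{e}$ is the union of all the subsets of $E(F)$ occurring in $f_{1},\hdots,f_{p}$, the inverse being concatenation of the two tuples. Granting it, one substitutes into the display above and recognises $\tdelta_{\Csh^{\mathcal{D}}}(R(F))$, whence $\Phi(F)=R(F)$ by the recovery argument. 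I expect this bijection lemma to be the main obstacle: one must verify that $(f_{1},\hdots,f_{p})$ satisfies the three axioms of a generalized and contracted partition of $Part_{\boldsymbol{e}}(F)$ and $(f_{p+1},\hdots,f_{l})$ those of $Cont_{\boldsymbol{e}}(F)$, and conversely. Axioms~(1) and~(2) are handled exactly as in Lemma~\ref{lemint}, using $E(Part_{\boldsymbol{e}}(F))=\boldsymbol{e}$, $E(Cont_{\boldsymbol{e}}(F))=\overline{\boldsymbol{e}}$, and $Part_{\boldsymbol{e}_{i}}(Part_{\boldsymbol{e}}(F))=Part_{\boldsymbol{e}_{i}}(F)=Part_{\boldsymbol{e}_{i}}(Cont_{\boldsymbol{e}}(F))$ when $\boldsymbol{e}_{i}\subseteq\boldsymbol{e}$, respectively $\boldsymbol{e}_{i}\cap\boldsymbol{e}=\emptyset$. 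Axiom~(3) — the constraint on the shortest path in the ambient forest between two disconnected blocks of a common group $f_{i}$, forcing every edge it meets to lie in a later group — needs care: one checks that such a path in $Part_{\boldsymbol{e}}(F)$ or in $Cont_{\boldsymbol{e}}(F)$ lifts to a shortest path in $F$ meeting only edges of the corresponding sub-family of groups, and that passing to $Part_{\boldsymbol{e}}(F)$ or $Cont_{\boldsymbol{e}}(F)$ neither creates nor destroys disconnectedness between blocks, so that the inequality transfers verbatim among $F$, $Part_{\boldsymbol{e}}(F)$ and $Cont_{\boldsymbol{e}}(F)$; this is the natural refinement of the corresponding step in Lemma~\ref{lemint}, combined with the handling of the bracket $[\cdot,\cdot]$ already used in Section~\ref{hcsh}. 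Once the bijection is in place, the remaining manipulations are the same bookkeeping as in the proofs already given.
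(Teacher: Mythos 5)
Your proposal follows essentially the same route as the paper: the paper's own proof simply says to resume the argument of Proposition \ref{propcombcsh} (induction on the edges degree, the cofreeness recovery, and the bijection lemma), noting that the disconnected components of $Part_{\boldsymbol{e}}(T)$ appearing on the left of the coproduct are what produce the bracket contractions, which is exactly how you motivate condition (3) of $\mathcal{P}_{c}$. Your write-up is in fact more detailed than the paper's (you state the $\mathcal{P}_{c}$-analogue of Lemma \ref{lemint} explicitly and flag the multiplicativity and weight-one checks), but the underlying argument is the same.
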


\begin{proof}
It suffices to resume the proof of proposition \ref{propcombcsh}. Note that, if $T$ is a rooted tree and $\boldsymbol{e} \models E(T)$, $Cont_{\boldsymbol{e}}(T)$ is a tree and $Part_{\boldsymbol{e}}(T)$ is a forest. So there is possibly contractions for the product $[\cdot , \cdot ]$ to the left of $(\Phi \otimes \Phi) \circ \tdelta_{\mathbf{H}_{CK}^{\mathcal{D}}} (T)$. Remark that
\begin{itemize}
\item the trees of $Part_{\boldsymbol{e}}(T)$ are disconnected components of $T$ and they appear to the left of $(\Phi \otimes \Phi) \circ \tdelta_{\mathbf{H}_{CK}^{\mathcal{D}}} (T)$,
\item the edges of $\overline{\boldsymbol{e}}$ between two disconnected components of $Part_{\boldsymbol{e}}(T)$ in $T$ are edges of $Cont_{\boldsymbol{e}}(T)$ and thus they appear to the right of $(\Phi \otimes \Phi) \circ \tdelta_{\mathbf{H}_{CK}^{\mathcal{D}}} (T)$.
\end{itemize}
We deduce formula (\ref{formulecombccsh}).
\end{proof}
\\

{\bf Remark.} {In the expression of $\Phi(F)$ (formula (\ref{formulecombccsh})), we find the terms of (\ref{formulecombcsh}) and other terms with contractions for the product $\left[ \cdot , \cdot \right]$. Taking $\left[ \cdot , \cdot \right] = 0$, we obtain (\ref{formulecombcsh}}) again.
\\

{\bf Examples.} {From the examples at the end of section \ref{csh}, we give the other terms with contractions for the product $\left[ \cdot , \cdot \right]$.
\begin{itemize}
\item There are no terms with contractions for the following trees: $ \addeux{$a$}, \adtroisun{$b$}{$a$}, \adtroisdeux{$a$}{$b$}, \adquatreun{$c$}{$b$}{$a$}, \adquatrequatre{$a$}{$c$}{$b$} $.
\item For the tree $\adquatredeux{$b$}{$a$}{$c$}$,
\begin{eqnarray*}
\Phi (\adquatredeux{$b$}{$a$}{$c$}) =  \hdots + \left[ \varphi (\addeux{$b$}) \varphi (\addeux{$c$}) \right] \varphi (\addeux{$a$})
\end{eqnarray*}
\item For the tree $ \adcinqsix{$a$}{$b$}{$c$}{$d$} $,
\begin{eqnarray*}
\Phi (\adcinqsix{$a$}{$b$}{$c$}{$d$}) & = & \hdots + \left[ \varphi (\addeux{$b$}) \varphi (\addeux{$c$}) \right] \varphi (\adtroisdeux{$a$}{$d$}) + \left[ \varphi (\addeux{$b$}) \varphi (\addeux{$d$}) \right] \varphi (\adtroisdeux{$a$}{$c$}) + \left[ \varphi (\addeux{$b$}) \varphi (\adtroisun{$d$}{$c$}) \right] \varphi (\addeux{$a$}) \\
& & + \varphi (\addeux{$c$}) \left[ \varphi (\addeux{$b$}) \varphi (\addeux{$d$}) \right] \varphi (\addeux{$a$}) + \left[ \varphi (\addeux{$b$}) \varphi (\addeux{$d$}) \right] \varphi (\addeux{$a$}) \varphi (\addeux{$c$}) \\
& & + \left[ \varphi (\addeux{$b$}) \varphi (\addeux{$d$}) \right] \varphi (\addeux{$c$}) \varphi (\addeux{$a$}) + \left[ \varphi (\addeux{$b$}) \varphi (\addeux{$c$}) \right] \varphi (\addeux{$a$}) \varphi (\addeux{$d$}) \\
& & + \left[ \varphi (\addeux{$b$}) \varphi (\addeux{$c$}) \right] \varphi (\addeux{$d$}) \varphi (\addeux{$a$}) + \varphi (\addeux{$d$}) \left[ \varphi (\addeux{$b$}) \varphi (\addeux{$c$}) \right] \varphi (\addeux{$a$}) .
\end{eqnarray*}
\end{itemize}
}

\nocite{*}

\bibliographystyle{amsalpha} 
\bibliography{ref}

\providecommand{\bysame}{\leavevmode\hbox to3em{\hrulefill}\thinspace}
\providecommand{\MR}{\relax\ifhmode\unskip\space\fi MR }
\providecommand{\MRhref}[2]{%
  \href{http://www.ams.org/mathscinet-getitem?mr=#1}{#2}
}
\providecommand{\href}[2]{#2}
\begin{thebibliography}{CEFM11}

\bibitem[CEFM11]{Calaque11}
D.~Calaque, K.~Ebrahimi-Fard, and D.~Manchon, \emph{Two interacting {H}opf
  algebras of trees}, Advances in Appl. Math. \textbf{47} (2011), 282--308,
  arXiv:0806.2238.

\bibitem[CK98]{Connes98}
A.~Connes and D.~Kreimer, \emph{Hopf algebras, {R}enormalization and
  {N}oncommutative geometry}, Comm. Math. Phys. \textbf{199} (1998), 203--242,
  arXiv:hep-th/9808042.

\bibitem[CK00]{Connes00}
\bysame, \emph{Renormalization in quantum field theory and the
  {R}iemann-{H}ilbert problem {I}. the {H}opf algebra of graphs and the main
  theorem}, Comm. Math. Phys. \textbf{210} (2000), 249--273,
  arXiv:hep-th/9912092.

\bibitem[CK01]{Connes01}
\bysame, \emph{Renormalization in quantum field theory and the
  {R}iemann-{H}ilbert problem {II}. $ \beta $-function, diffeomorphisms and the
  renormalization group}, Comm. Math. Phys. \textbf{216} (2001), 215--241,
  arXiv:hep-th/0003188.

\bibitem[DHT02]{Duchamp00}
G.~Duchamp, F.~Hivert, and J.-Y. Thibon, \emph{Noncommutative symmetric
  functions {VI}: Free quasi-symmetric functions and related algebras}, Int. J.
  Algebra Comput. \textbf{12} (2002), no.~671, arXiv:math/0105065.

\bibitem[EV04]{Ecalle04}
J.~Ecalle and B.~Vallet, \emph{The arborification-coarborification transform:
  analytic, combinatorial, and algebraic aspects}, Ann. Fac. Sc. Toulouse
  \textbf{13} (2004), no.~4, 575--657.

\bibitem[Foi02a]{FoissyI02}
L.~Foissy, \emph{Les algèbres de {H}opf des arbres enracin{\'e}s
  d{\'e}cor{\'e}s, {I}}, Bull. Sci. Math. \textbf{126} (2002), no.~3, 193--239,
  arXiv:math/0105212.

\bibitem[Foi02b]{FoissyII02}
\bysame, \emph{Les algèbres de {H}opf des arbres enracin{\'e}s d{\'e}cor{\'e}s,
  {II}}, Bull. Sci. Math. \textbf{126} (2002), no.~4, 249--288,
  arXiv:math/0105212.

\bibitem[Foi12]{Foissy11}
\bysame, \emph{Ordered forests and parking functions}, Int. Math. Res. Notices
  \textbf{2011} (2012), arXiv:1007.1547.

\bibitem[Foi13]{Foissy13}
\bysame, \emph{The {H}opf algebra of {F}liess operators and its dual pre-{L}ie
  algebra}, arXiv:1304.1726.

\bibitem[FU10]{Foissy10}
L.~Foissy and J.~Unterberger, \emph{Ordered forests, permutations and iterated
  integrals}, preprint (2010), arXiv:1004.5208.

\bibitem[GL90]{Grossman90}
R.L. Grossman and R.G. Larson, \emph{Hopf-algebraic structure of combinatorial
  objects and differential operators}, Israel J. Math. \textbf{72} (1990),
  no.~1--2, 109--117, arXiv:0711.3877.

\bibitem[Hof00]{Hoffman00}
M.E. Hoffman, \emph{Quasi-shuffle products}, Journal of Algebraic Combinatorics
  \textbf{11} (2000), 49--68, arXiv:math/9907173.

\bibitem[Hol03]{Holtkamp03}
R.~Holtkamp, \emph{Comparaison of {H}opf algebras on trees}, Arch. Math.
  (Basel) \textbf{80} (2003), no.~4, 368--383.

\bibitem[LR06]{Loday05}
J.L. Loday and M.~Ronco, \emph{On the structure of cofree hopf algebra}, J.
  reine angew. Math. \textbf{592} (2006), 123--155, arXiv:math/0405330.

\bibitem[LR10]{Loday10}
\bysame, \emph{Combinatorial hopf algebras}, Amer. Math. Soc., Quanta of maths
  (2010), no.~11, 347--383, arXiv:0810.0435.

\bibitem[LV12]{LodayV}
J.L. Loday and B.~Vallette, \emph{Algebraic {O}perads}, Springer, 2012.

\bibitem[Moe01]{Moerdijk01}
I.~Moerdijk, \emph{On the {C}onnes-{K}reimer construction of {H}opf algebras},
  Contemp. Math. \textbf{271} (2001), 311--321, arXiv:math-ph/9907010.

\bibitem[MR95]{Malvenuto05}
C.~Malvenuto and C.~Reutenauer, \emph{Duality between quasi-symmetric functions
  and the {S}olomon descent algebra}, J. Algebra \textbf{177} (1995), no.~3,
  967--982.

\bibitem[MS11]{Manchon08}
D.~Manchon and A.~Saïdi, \emph{Lois pré-{L}ie en interaction}, Communications
  in Algebra \textbf{39} (2011), 3662--3680, arXiv:0811.2153.

\bibitem[NT06]{Novelli06}
J.-C. Novelli and J.-Y. Thibon, \emph{Polynomial realizations of some
  trialgebras}, arXiv:math/0605061.

\bibitem[Reu93]{Reutenauer93}
C.~Reutenauer, \emph{Free {L}ie algebras}, The Clarendon Press, 1993.

\bibitem[Slo]{Sloane}
N.J.A. Sloane, \emph{On-line encyclopedia of integer sequences},
  http://www.research.att.com/ $\tilde{\:}$njas/ sequences/ Seis.html.

\bibitem[Sta02]{Stanley97}
R.P. Stanley, \emph{Enumerative combinatorics. {V}ol. 1}, Cambridge University
  Press, 2002.

\end{thebibliography}

\end{document}